\newcommand{\R}{{\mathbb R}}
\newtheoremstyle{mystyle}               
  {}                
  {}                
  {}        
  {}                
  {\bfseries \itshape}       
  {.}      
  { }      
  {}       
\newtheorem{theorem}{Theorem}[section]
\newtheorem{lemma}[theorem]{Lemma} 
\newtheorem{corollary}[theorem]{Corollary}
\theoremstyle{definition}
\newtheorem{definition}[theorem]{Definition}
\newtheorem{example}[theorem]{Example}	
\theoremstyle{mystyle}
\newtheorem{remark}[theorem]{Remark}
\theoremstyle{proof}
\title{Relationship of the Green's functions related to the Hill's equation coupled to different  boundary value conditions$^{1,2,}$\footnote{Supported by Xunta de Galicia (Spain),  project EM2014/032 and Grant PID2020-113275GB-I00 funded by MCIN/AEI/10.13039/501100011033 and by “ERDF A way of making Europe” of the “European Union”.}}
\date{}
\author{Alberto Cabada, Luc{\' i}a L\'opez-Somoza and Mouhcine Yousfi\\
	$^1$CITMAga, 15782, Santiago de Compostela, Galicia, Spain\\
	$^2$Departamento de Estatística, Análise Matemática e Optimización\\
	Facultade de Matem\'aticas, Universidade de Santiago de Com\-pos\-te\-la, Spain.\\
	alberto.cabada@usc.es; lucia.lopez.somoza@usc.es; yousfi.mouhcine@usc.es}
\begin{document}
	\maketitle
	\begin{abstract}
		In this paper we will deduce  several properties  of the Green's functions related to the Hill's equation coupled to various boundary value conditions. In particular, the idea is to study the Green's functions of the second order differential operator coupled to Neumann, Dirichlet, Periodic and Mixed boundary conditions, by expressing the Green's function of a given problem as a linear combination of the Green's function of the other ones. This will allow us to compare different Green's functions when their sign is constant. Finally, such properties of the Green's function of the linear problem will be fundamental to deduce the existence of solutions to the nonlinear problem. The results are derived from the fixed point theory applied to related operators defined on suitable cones in Banach spaces.
	\end{abstract}
	
	\noindent {\bf Keywords:}  Green's function, Hill's equation, Comparison Results, Nonlinear Boundary Value Problems.\\
	
	\noindent {\bf MSC2020-Mathematics Subject Classification:}
	34B05, 34B08, 34B09, 34B15, 34B18, 34B27, 34B30\\

\section{Introduction}
This paper deals with the study of Green's functions related to Hill's equation
\[u''(t)+a(t)\,u(t)=0.\]
This equation has many applications in several fields as it models a large set of physical problems. Some examples of such applications are the inverted pendulum, Airy's equation or Mathieu's equation, which can be found in \cite{cabadacid,csizmadia,magnus,simmons,torres,zhang}.

Furthermore, it is important to note that the results obtained for Hill's equation can be easily extended (with a suitable change of variable, see \cite{simmons}) to a general second order linear differential equation of the form
\[u''(t)+a_1(t)\,u'(t)+a_0\,u(t)=0\]
provided that the functions $a_0$ and $a_1$ have enough regularity.

Moreover, the nonhomogeneous problem related to Hill's equation
\[u''(t)+a(t)\,u(t)=\sigma(t)\]
has also been extensively studied  (see \cite{akcay,cabadacid,A,cheng,hakl_torres,rodriguezcollado,torres,wang,yu,zhang,zhang2,zhang3} and the references therein), especially coupled to periodic conditions. In this sense, a particularly interesting case happens when $\sigma$ has constant sign, which can be interpreted as the action of an external force acting over the system on a certain direction (positive or negative). In such a case, the solutions of constant sign  of the equation can be interpreted as situations in which the deviation caused by the force is produced only in one direction (that is, the object oscillates only above or below the equilibrium point of the system).

It is in this context when the study of Green's functions gains importance, since the existence of solutions  of differential equations with constant sign  is directly related to the constant sign of the Green's functions. In particular, the fact that the Green's function related to a differential problem does not changes its sign allows the application of several topological and iterative methods to deduce existence results for suitable nonlinear problems.

Having this idea in mind, in \cite{kkkk} the authors develop a method which allows to write the Green's functions related to Neumann, Dirichlet and Mixed problems defined on the interval $[0,T]$ as a linear combination of Green's functions of some extended periodic problem (that is, the periodic problem was considered either on the interval $[0,2T]$ or on $[0,4T]$ and the potentials for these problems were the even extension $\widetilde{a}$ to $[0,2T]$ of the potential $a(t)$ considered on $[0,T]$ and the even extension of $\widetilde{a}$ to $[0,4T]$, respectively). As a consequence of such decomposition, the authors were able to deduce some comparison results between the solutions of the aforementioned problems. Moreover, they were able to relate the constant sign of the corresponding Green's functions.

This paper can be regarded then as a continuation of the work developed in \cite{kkkk} as our main objective will also be the decomposition of some Green's functions in terms of other ones. However, the techniques used in this paper are completely different to those mentioned for \cite{kkkk}. More concretely, we will consider two different ways of making the decomposition of Green's functions. The first one will be based on the superposition property of the solutions of a differential problem. On the other hand, the second one will make use of a general formula proved in \cite{Cabada}, which allows to relate two different Green's functions as long as the boundary value conditions of one of them can be rewritten in terms of the other one and both problems are nonresonant.

This way, we will consider periodic, Neumann, Dirichlet and mixed conditions and relate their corresponding Green's functions two by two. One of the differences between these approaches and the one considered in \cite{kkkk} is the fact that here we are able to find a relation between any pair of the aforementioned Green's functions, not only between any of them and the periodic one. Another difference is that in the present paper we are able to connect the  Green's function related to the periodic problem on $[0,T]$ with the Green's function related to any of the other cited boundary condition on $[0,T]$, which was not possible with the techniques used in \cite{kkkk}.  

As a consequence of the expressions relating the Green's functions, we are able to find some connections between their constant sign. Some of the results were already proved in \cite{kkkk} (although, the proof was different) and some others are, as far as we know, new in the literature.

The paper is divided into $5$ sections. In Section $2$ we compile some preliminary results from \cite{Cabada}. Sections $3$ and $4$ include the decomposition of Green's functions using the two different approaches mentioned before. Finally, Section $5$ includes an application to ensure the existence and find some bounds for the solution of nonlinear problems.

\section{Preliminaries}
Consider the second order linear operator  
\begin{equation*}
L\, u(t):=u''(t) +a(t) u(t),\;\; t\in I,
\end{equation*}
with $I\equiv [0,1]$, $a:I\rightarrow \mathbb{R}$, $a\in L^{1}(I)$, and 
\begin{equation*}
B_{i}(u):=\displaystyle \sum_{j=0}^{1} \left(\alpha_{j}^{i} u^{\left(j\right)}(a)+\beta_{j}^{i} u^{\left(j\right)}(b) \right),\quad i=1,\,2,
\end{equation*}
being $\alpha_{j}^{i},\;\; \beta_{j}^{i}$  real constants for $i=1,\,2,\;\; j=0,\,1$.

We will work on the space 
\begin{equation*}
W^{2,1}(I)=\{u\in C(I): u'\in AC(I)\},
\end{equation*} 
where  $AC(I)$ is the set of absolutely continuous functions on $I$. In particular, we will work with a Banach space $X\subset W^{2,1}(I)$ in which operator $L$ is non resonant, that is, the homogeneous equation 	\begin{equation*}
u''(t) +a(t) u(t)=0\;\; \text{a.e}.\;\;  t\in I,\quad u\in X,
\end{equation*} 
has as a unique solution the trivial one. In such a case, it occurs that for every $\sigma\in L^{1}(I)$ the non-homogeneous problem 
\begin{equation*}
u''(t) +a(t) u(t)=\sigma(t)\;\; \text{a.e.}\;\; t\in I,\quad  u\in X,
\end{equation*} 
 has a unique solution given by 
\begin{equation*}
 u(t)=\displaystyle \int_{0}^{1} G(t,s)\, \sigma(s)\,ds,\quad \forall t\in I,
\end{equation*}
where $G$ denotes the corresponding Green's function, which is the unique function that satisfies the following properties (see \cite{system} for details) 
 
\begin{definition}\label{d-Green-Function}
We say that $G\colon I\times I \rightarrow \mathbb{R}$ is a Green's function for problem 
\begin{equation*}
L\, u(t)=\sigma(t),\;\; \text{a.e}. \;\; t\in I, \quad 
B_1(u)=h_1,\quad B_2(u)=h_2,
\end{equation*}
being $\sigma\in L^{1}(I)$ and $h_1,\; h_2\in \R$, if it satisfies the following properties: 
\begin{itemize}
\item $G \in C(I \times I) \cap C^2((I \times I)\backslash{\{(s,s), s \in I\}})$.
\item For each $s\in (0,1)$, $G(\cdot,s)$ solves the differential equation $L y(t)=0$ on $[0,s)\cup(s,1]$ and satisfies the boundary conditions $B_1(G(\cdot,s))=B_2(G(\cdot,s))=0$.
\item For each $t\in (0,1)$ there exist the lateral limits $$\frac{\partial}{\partial t}G(t^{-},t)=\frac{\partial}{\partial t}G(t,t^{+}) \quad \text{and} \quad   \frac{\partial}{\partial t}G(t,t^{-})=\frac{\partial}{\partial t}G(t^{+},t)$$ and, moreover, $$\frac{\partial}{\partial t}G(t^{+}, t)-\frac{\partial }{\partial t}G(t^{-}, t)=\frac{\partial}{\partial t}G(t, t^{-})-\frac{\partial}{\partial t}G(t,t^{+})=1.$$
\end{itemize}

\end{definition}

We compile now some properties of Green's functions related to operator $L$. The following result is an adaptation of \cite[Lemma 1]{Cabada} to the problem considered in this paper.
\begin{lemma}\label{e-sol-hi}
	Problem 
	\begin{equation}\label{e-u}
	L\, u(t)=\sigma(t),\;\; \text{a.e.}\;\; t\in I, \quad  	B_1(u)=B_2(u)=0,
	\end{equation}
	has a unique Green's function if and only if the two following problems 
	\begin{equation*}
	L\,u(t)=0,\;\; \text{a.e.}\ t\in I, \quad
	B_1(u)=1,\quad 
	B_2(u)=0,
	\end{equation*}
	\begin{equation*}
	L\,u(t)=0,\;\; \text{a.e.}\ t\in I, \quad
	B_1(u)=0,\quad 
	B_2(u)=1,
	\end{equation*}
	have a unique solution that we denote as $\omega_{1}$ and $\omega_{2}$ respectively. 
	
	In such a case, for any $\sigma \in L^{1}(I)$, the following problem 
	\begin{equation*}
	L\,u(t)=\sigma(t),\;\; \text{a.e}\;\; t\in I,\quad
	B_1(u)=\lambda_1,\quad B_2(u)=\lambda_2,
	\end{equation*}
	has a unique solution given by
	\begin{equation*}
	u(t)=\displaystyle \int_{0}^{1} g(t,s)\, \sigma(s)\,ds+\lambda_1\, \omega_1(t) + \lambda_2\, \omega_2(t).
	\end{equation*}
\end{lemma}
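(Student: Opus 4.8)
The plan is to reduce the entire statement to a single nondegeneracy condition on a $2\times 2$ matrix built from the boundary functionals, and then to obtain the representation formula by pure superposition. First I would fix a fundamental system $\{u_1,u_2\}$ of solutions of the homogeneous equation $Lu=0$ on $I$; since $L$ is a second order operator, this solution space is two dimensional. Writing an arbitrary solution of $Lu=0$ as $c_1u_1+c_2u_2$ and applying the boundary functionals, every problem of the form $Lu=0$, $B_1(u)=\mu_1$, $B_2(u)=\mu_2$ becomes the linear algebraic system
\[
\begin{pmatrix} B_1(u_1) & B_1(u_2) \\ B_2(u_1) & B_2(u_2) \end{pmatrix}
\begin{pmatrix} c_1 \\ c_2 \end{pmatrix}
=\begin{pmatrix} \mu_1 \\ \mu_2 \end{pmatrix},
\]
whose coefficient matrix I will call $M$, using here the linearity of $B_1$ and $B_2$.

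The key observation is that each of the three unique-solvability statements in the lemma is equivalent to $\det M\neq 0$. Indeed, the homogeneous problem $Lu=0$, $B_1(u)=B_2(u)=0$ has only the trivial solution precisely when $M$ is invertible; by the nonresonance characterization recalled before Definition \ref{d-Green-Function}, this is exactly the condition for the existence of a (necessarily unique) Green's function $g$ of problem \eqref{e-u}. On the other hand, the problems defining $\omega_1$ and $\omega_2$ correspond to the systems $Mc=(1,0)^{T}$ and $Mc=(0,1)^{T}$, which admit a unique solution if and only if $\det M\neq 0$. Thus the existence of the Green's function and the simultaneous existence and uniqueness of $\omega_1$ and $\omega_2$ are all equivalent to the same condition, which settles the \emph{if and only if} part.

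For the representation formula I would argue by superposition. Set $v(t)=\int_0^1 g(t,s)\,\sigma(s)\,ds$; by the defining properties in Definition \ref{d-Green-Function}, $v$ solves $Lv=\sigma$ with $B_1(v)=B_2(v)=0$. Since $L\omega_i=0$ and $B_j(\omega_i)=\delta_{ij}$, the candidate $u=v+\lambda_1\,\omega_1+\lambda_2\,\omega_2$ satisfies $Lu=\sigma$, $B_1(u)=\lambda_1$ and $B_2(u)=\lambda_2$, so it is a solution of the stated problem. Uniqueness follows because the difference of any two solutions solves the homogeneous problem $Lu=0$, $B_1(u)=B_2(u)=0$, which under $\det M\neq 0$ forces it to vanish.

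The routine verifications — that $v$ indeed realizes the zero boundary data, and the derivative-jump bookkeeping encoded in Definition \ref{d-Green-Function} — are immediate once the algebraic reduction is in place, so I expect no real difficulty there. The only point requiring genuine care, and the one I would treat as the crux, is the first equivalence: that the abstract existence of a unique Green's function is truly the same as the invertibility of $M$. This is where I would lean on the nonresonance setup and on \cite[Lemma 1]{Cabada}, checking that the functional framework of the space $X$ encodes exactly the homogeneous conditions $B_1(u)=B_2(u)=0$ and that no compatibility obstruction arises for the $L^{1}(I)$ data.
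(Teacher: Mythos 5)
Your argument is correct. Note that the paper does not actually prove this lemma: it is stated as an adaptation of \cite[Lemma 1]{Cabada} and the proof is deferred to that reference. Your reduction to the invertibility of the matrix $M=\bigl(B_i(u_j)\bigr)$ built from a fundamental system, followed by superposition and the trivial-kernel uniqueness argument, is exactly the standard proof that the citation stands in for, and the one point you single out as the crux --- that unique existence of the Green's function is equivalent to nonresonance, i.e.\ to $\det M\neq 0$ --- is precisely what the preliminaries of the paper assert when introducing the space $X$, so nothing is missing.
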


Here, by considering $C_1,\, C_2:C^1(I)\rightarrow \mathbb{R}$, two linear and continuous operators, we formulate  the following result for general second order non-local boundary value problems. This result is an adaptation of \cite[Theorem 2]{Cabada} to the second order problem. The general result (which proves an analogous formula for the arbitrary $n$-th order problem) can be seen in \cite{Cabada}.
 
\begin{theorem}\label{th_Ci}
	Let us suppose that the homogeneous problem of \eqref{e-u} ($\sigma\equiv 0$) has a unique solution ($u\equiv 0$) and let $g$ be its related Green's function. 
	Let $\sigma \in L^{1}(I)$, and $\delta_1,\, \delta_2$ be such that
	\begin{equation*}
	\label{e-espectro}
	\det(I-A)\neq 0,
	\end{equation*}
	with $I$ the identity matrix of order $2$ and $A=(a_{ij})_{2\times 2}\in \mathcal{M}_{2\times 2}$ given by \[a_{ij}=\delta_{j}\,C_i(\omega_{j}), \quad i,\; j \in \{1, \, 2\}.\]
	Then problem
	\begin{equation}
		\label{e-linear-delta}	
	L\,u(t)=\sigma(t),\;\; \text{a.e.}\;\; t\in I,\quad
	B_1(u)=\delta_1\, C_1(u),\quad B_2(u)=\delta_2\, C_2(u),
	\end{equation}
	 has a unique solution $u\in C^2(I)$, given by the expression
	\begin{equation*}
	u(t)=\displaystyle \int_{0}^{1} G(t,s,\delta_{1},\delta_2) \sigma(s) ds,
	\end{equation*}
	where 
	\begin{equation}\label{e-formula}
	G(t,s,\delta_{1},\delta_2):=g(t,s)+ \sum_{i=1}^{2} \sum_{j=1}^{2} \delta_{i} \, b_{ij} \, \omega_{i}(t)  \, C_j(g(\cdot,s)),\quad t, \; s \in I,
	\end{equation}
	with $B=(b_{ij})_{2\times 2}=(I-A)^{-1}$.
\end{theorem}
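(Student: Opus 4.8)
The plan is to reduce the nonlocal problem \eqref{e-linear-delta} to a two-point problem with \emph{unknown} boundary data and then close the loop with a finite-dimensional linear system. First I would observe that if $u$ solves \eqref{e-linear-delta} and we abbreviate $\lambda_i:=\delta_i\,C_i(u)$ for $i=1,2$, then $u$ solves the local problem $L\,u=\sigma$ a.e. on $I$ with $B_1(u)=\lambda_1$, $B_2(u)=\lambda_2$. Since the homogeneous problem of \eqref{e-u} is nonresonant, Lemma~\ref{e-sol-hi} applies and yields
$$u(t)=\int_0^1 g(t,s)\,\sigma(s)\,ds+\lambda_1\,\omega_1(t)+\lambda_2\,\omega_2(t).$$
The numbers $\lambda_1,\lambda_2$ are not free: they must be consistent with the defining relations $\lambda_i=\delta_i C_i(u)$, which is precisely where the nonlocal character of the conditions enters.

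Next I would impose this consistency. Applying the functional $C_j$ to the representation above and using its linearity together with the fact that it may be interchanged with the integral (this is the one analytic point that needs care; it is justified by the continuity of $C_j$ on $C^1(I)$ and the regularity of $g(\cdot,s)$ away from the diagonal, so that each $C_j(g(\cdot,s))$ is well defined), I obtain, writing $\mu_j:=C_j(u)$ and $d_j:=\int_0^1 C_j(g(\cdot,s))\,\sigma(s)\,ds$,
$$\mu_j=d_j+\sum_{k=1}^2 \delta_k\,C_j(\omega_k)\,\mu_k,\qquad j=1,2.$$
Recognizing $a_{jk}=\delta_k\,C_j(\omega_k)$, this is exactly the linear system $(I-A)\mu=d$.

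The hypothesis $\det(I-A)\neq 0$ makes $I-A$ invertible, so the system has the unique solution $\mu=(I-A)^{-1}d=B\,d$, that is $\mu_i=\sum_{j=1}^2 b_{ij}\,d_j$. This pins down $\lambda_i=\delta_i\mu_i$ uniquely and already delivers uniqueness of $u$: two solutions would generate the same data $d$, hence the same $\mu$ by invertibility, hence the same $\lambda_i$, and therefore coincide through the representation of Lemma~\ref{e-sol-hi}. Substituting $\lambda_i=\delta_i\sum_{j=1}^2 b_{ij}d_j$ back into that representation and interchanging the finite sums with the integral gives
$$u(t)=\int_0^1\Big[g(t,s)+\sum_{i=1}^2\sum_{j=1}^2 \delta_i\,b_{ij}\,\omega_i(t)\,C_j(g(\cdot,s))\Big]\sigma(s)\,ds,$$
which is precisely \eqref{e-formula}.

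Finally, for existence one reverses the argument: define $u$ by the displayed formula, note that $u\in C^2(I)$ since the kernel term solves $Lu=\sigma$ and the $\omega_i$ are homogeneous solutions, and verify $B_i(u)=\delta_i C_i(u)$ by re-deriving the same system and invoking that $\mu=Bd$ solves $(I-A)\mu=d$. The main obstacle I anticipate is not the algebra but the justification of passing $C_j$ through the integral sign and confirming that $C_j(g(\cdot,s))$ is meaningful for the continuous-but-not-$C^1$ section $g(\cdot,s)$; once that is secured, the whole statement collapses to the invertibility of $I-A$ guaranteed by the hypothesis.
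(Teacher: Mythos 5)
Your proposal is correct, and it is essentially the standard argument: reduce the nonlocal problem to the two‑point problem of Lemma~\ref{e-sol-hi} with unknown data $\lambda_i=\delta_i C_i(u)$, close the consistency system $(I-A)\mu=d$ using $\det(I-A)\neq 0$, and substitute back to obtain \eqref{e-formula}. The paper itself gives no proof of Theorem~\ref{th_Ci} (it defers to the cited reference \cite{Cabada}, where the $n$-th order version is proved by exactly this reduction), so your argument faithfully reconstructs that proof; your explicit flagging of the interchange of $C_j$ with the integral and of the meaning of $C_j(g(\cdot,s))$ across the diagonal discontinuity of $\partial g/\partial t$ is a point the paper glosses over and is handled correctly for the point-evaluation functionals actually used here.
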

 
 For any $\lambda \in \mathbb{R}$, consider operator $L[\lambda]$ defined  as follows 
 \begin{equation*}
 L[\lambda]\, u(t)\equiv u''(t)+(a(t)+\lambda)\, u(t),\;\;   t\in I.
 \end{equation*}
  When working with this operator, to stress the dependence of the Green's function  on the parameter $\lambda$, we will denote by $G[\lambda]$ the Green's function related to $L[\lambda]$.

  
  In this paper, we will deal with some problems related to operator $L[\lambda]$, which we describe in the sequel:
  \begin{itemize}
  \item Neumann problem:
  \begin{equation}\label{e-Neumann}
 L[\lambda]\,u(t)=\sigma(t),\;\; \text{a.e.}\;\; t\in I,\;\; u\in X_{N}=\{u\in W^{2,1}(I): u'(0)=u'(1)=0 \}.
 \end{equation}
  \item Dirichlet problem:
   \begin{equation}\label{e-Dirichlet}
  L[\lambda]\,u(t)=\sigma(t),\;\; \text{a.e.}\;\; t\in I,\;\; u\in X_{D}=\{u\in W^{2,1}(I): u(0)=u(1)=0 \}.
  \end{equation}
  \item Mixed problem 1:
  \begin{equation}\label{e-Mixed-1}
  L[\lambda]\,u(t)=\sigma(t),\;\; \text{a.e.}\;\; t\in I,\;\; u\in X_{M_{1}}=\{u\in W^{2,1}(I): u'(0)=u(1)=0 \}.
  \end{equation}
  \item Mixed problem 2:
  \begin{equation}\label{e-Mixed-2}
 L[\lambda]\,u(t)=\sigma(t),\:\; \text{a.e.} \;\; t\in I,\; u \in X_{M_{2}}=\{u\in W^{2,1}(I): u(0)=u'(1)=0 \}.
  \end{equation}
   \item Periodic problem:
  \begin{equation}\label{e-periodic}
  L[\lambda]\,u(t)\,=\sigma(t),\;\; \text{a.e.}\;\; t\in I,\;\; u\in X_{P}=\{u\in W^{2,1}(I): u(0)=u(1),\; u'(0)=u'(1) \}.
  \end{equation}
 \end{itemize}
 We denote by $G_{D}[\lambda]$, $G_{P}[\lambda]$, $G_{N}[\lambda]$, $G_{M_{1}}[\lambda]$ and $G_{M_{2}}[\lambda]$ the Green's function related to Dirichlet, Periodic, Neumann, Mixed 1 and Mixed 2 problems, respectively. Moreover, we denote by $u_{D}$, $u_{P}$, $u_{N}$, $u_{M_{1}}$ and $u_{M_{2}}$  the solutions of the corresponding problems and by $\lambda_{0}^{D}$, $\lambda_{0}^{P}$, $\lambda_{0}^{N}$, $\lambda_{0}^{M_{1}}$ and $\lambda_{0}^{M_{2}}$ the first eigenvalues of each problem.

Now, let us consider the following first order differential $2$-dimensional linear system 
 \begin{equation}\label{af}
x'(t)=A(t)x(t)+f(t),\;\; \text{a.e} \;\; t\in I,
 \end{equation} 
subject to the two-point boundary value condition 
\begin{equation}\label{bc}
B\,x(0)+C\,x(1)=0,
\end{equation} 
being $A \in L^{1}(I,M_{2\times 2})$,  $f\in L^{1}(I,\mathbb{R}^{2})$, $B,C \in \mathcal{M}_{2\times 2}$, and $x\in AC(I,\mathbb{R}^{2})$. 

From \cite[page 22]{system}, we know that the Green's function related to \eqref{af}-\eqref{bc}, denoted by $g$,  satisfies that
\begin{equation}\label{referencia1}
B\, g(0,0) +C\, g(1,0)=B
\end{equation}
and 
\begin{equation}\label{referencia2}
B\, g(0,1)+C\, g(1,1)=-C.
\end{equation}
Now, we observe that the equation
\begin{equation}\label{ecuacion2}
 L[\lambda]\,u(t)= \sigma(t),\;\; \text{a.e.}\;\; t\in I
\end{equation}
 can be rewritten as a system of type  \eqref{af} as follows
\begin{equation}\label{sistema1}
\begin{pmatrix}
u(t)\\
u'(t)
\end{pmatrix}'=\begin{pmatrix}
0 & 1\\
-a(t)-\lambda & 0
\end{pmatrix}
\begin{pmatrix}
u(t)\\
u'(t)
\end{pmatrix}
+\begin{pmatrix}
0\\
\sigma(t)
\end{pmatrix}.
\end{equation}
In this case, we have that 
\begin{equation*}
A(t)=\begin{pmatrix}
0 & 1\\
-a(t)-\lambda & 0
\end{pmatrix}\;\;\text{and}\;\; f(t)=\begin{pmatrix}
0\\
\sigma(t)
\end{pmatrix}.
\end{equation*}
Now, we will give the expression of the different problems related to operator $L[\lambda]$ mentioned above based on equation \eqref{bc}, by giving the corresponding matrices $B$ and $C$ of each case:
\begin{itemize}
	\item Neumann problem:
 $$B=\begin{pmatrix}
	0 & 1\\
	0 & 0
	\end{pmatrix} \quad \text{and} \quad C=\begin{pmatrix}
	0 & 0\\
	0 & 1
	\end{pmatrix}
	.$$
	\item Dirichlet problem:
$$B=\begin{pmatrix}
    1 & 0\\
    0 & 0
    \end{pmatrix}
    \quad \text{and} \quad C=\begin{pmatrix}
    0 & 0\\
    1 & 0
    \end{pmatrix}
    .$$
	\item Mixed problem 1:
 $$B=\begin{pmatrix}
	0 & 1\\
	0 & 0
	\end{pmatrix}
	\quad \text{and} \quad  C=\begin{pmatrix}
	0 & 0\\
	1 & 0
	\end{pmatrix}
.$$
	\item Mixed problem 2:
$$B=\begin{pmatrix}
	1 & 0\\
	0 & 0
	\end{pmatrix}
	\quad \text{and} \quad C=\begin{pmatrix}
	0 & 0\\
	0 & 1
	\end{pmatrix}
	.$$
	\item Periodic problem:
 $$B=\begin{pmatrix}
	1 & 0\\
	0 & 1
	\end{pmatrix}
	\quad \text{and} \quad  C=\begin{pmatrix}
	-1 & 0\\
	0 & -1
	\end{pmatrix}.
	$$
\end{itemize}
\begin{remark}
The matrices $B$ and $C$ are not unique since we can take as $B$ and $C$ a multiple $k\,B$ and $k\,C$ with $k$ a nonzero real number. We can also swap the rows of the two matrices $B$ and $C$.	
\end{remark}
Using \cite[page 11]{cl} we know that the matrix function 
\begin{equation*}
g[\lambda](t,s)=\begin{pmatrix}
-\frac{\partial}{\partial s} G[\lambda](t,s) & G[\lambda](t,s)\\
-\frac{\partial^{2} }{\partial s\partial t}G[\lambda](t,s) & \frac{\partial}{\partial t}G[\lambda](t,s)
\end{pmatrix}
\end{equation*}
 is the Green's function related to  system \eqref{sistema1} associated with the differential equation \eqref{ecuacion2}, coupled to  the boundary conditions \eqref{bc} where $G[\lambda]$ is the Green's function of the linear equation \eqref{ecuacion2} coupled to boundary conditions  \eqref{bc} under the notation $x=\begin{pmatrix}
 u\\
 u'
 \end{pmatrix}$.

 Now we introduce some auxiliary functions that we are going to use throughout this article to relate the different problems that we have defined above. 
 
 Let us define $r_{1}[\lambda]$ as the unique solution to the problem 
  \begin{equation}\label{5}
 L[\lambda]\, u(t)=0,\;\; \text{a.e.}\;\; t\in I,\;\; u(0)=1,\;\; u(1)=0,
\end{equation}

  $r_{2}[\lambda]$ as the unique solution to
 \begin{equation}\label{4}
 L[\lambda]\, u(t)=0,\;\; \text{a.e}\;\; t\in I,\;\; u(0)=0,\;\; u(1)=1,
\end{equation}
 
 $r_{3}[\lambda]$ as the unique solution to
 \begin{equation}\label{39}
 L[\lambda]\, u(t)=0,\;\; \text{a.e.}\;\; t\in I,\;\; u(0)-u(1)=1,\;\; u'(0)-u'(1)=0,
 \end{equation}
 
 $r_{4}[\lambda]$ as the unique solution to
  \begin{equation}\label{19}
 L[\lambda]\, u(t)=0,\;\; \text{a.e.}\;\; t\in I,\;\; u(0)-u(1)=0,\;\; u'(0)-u'(1)=1,
 \end{equation}
 
 $r_{5}[\lambda]$ as the unique solution to
 \begin{equation*}
 L[\lambda]\, u(t)=0,\;\;  \text{a.e.}\;\; t\in I,\;\; u'(0)=1,\;\;  u'(1)=0,
 \end{equation*}
 
 $r_{6}[\lambda]$ as the unique solution to
 \begin{equation*}
 L[\lambda]\, u(t)=0,\;\; \text{a.e.}\;\;  t\in I,\;\; u'(0)=0,\;\; u'(1)=1,
 \end{equation*}
 
 $r_{7}[\lambda]$ as the unique solution to  
 \begin{equation*}
 L[\lambda]\, u(t)=0,\;\;  \text{a.e.}\;\; t\in I,\;\; u(0)=1,\;\; u'(1)=0,
 \end{equation*}
 
  $r_{8}[\lambda]$ as the unique  solution to  
 \begin{equation*}
 L[\lambda]\, u(t)=0,\;\;  \text{a.e.}\;\; t\in I,\;\; u(0)=0,\;\; u'(1)=1,
 \end{equation*}
 
   $r_{9}[\lambda]$ as the unique  solution to  
  \begin{equation*}
  L[\lambda]\, u(t)=0,\;\; \text{a.e.}\;\; t\in I,\;\;  u'(0)=1,\;\; u(1)=0,
  \end{equation*}
  
  and $r_{10}[\lambda]$ as the unique solution of the problem 
  \begin{equation*}
  L[\lambda]\, u(t)=0,\;\; \text{a.e.}\;\; t\in I,\;\; u'(0)=0,\;\; u(1)=1.
  \end{equation*}
  Now, we will find the expression of $r_{1}[\lambda]$ as a function of the Green's function of the Dirichlet problem  using the equalities \eqref{referencia1} and \eqref{referencia2}. 
  
  For the Dirichlet problem, equation \eqref{referencia2} becomes the following equality
\begin{equation*}
\footnotesize
\begin{pmatrix}
1 & 0\\
0 & 0
\end{pmatrix}
\begin{pmatrix}
-\frac{\partial}{\partial s} G_{D}[\lambda](0,0) & G_{D}[\lambda](0,0)\\
-\frac{\partial^{2} }{\partial s\partial t}G_{D}[\lambda](0,0) & \frac{\partial}{\partial t} G_{D}[\lambda](0,0)
\end{pmatrix}
+\begin{pmatrix}
0 & 0\\
1 & 0
\end{pmatrix}
\begin{pmatrix}
-\frac{\partial }{\partial s}G_{D}[\lambda](1,0) & G_{D}[\lambda](1,0)\\
-\frac{\partial^{2} }{\partial s\partial t}G_{D}[\lambda](1,0) & \frac{\partial }{\partial t}G_{D}[\lambda](1,0)
\end{pmatrix}=\begin{pmatrix}
1 & 0\\
0 & 0
\end{pmatrix}.
\end{equation*}
Therefore, 
\begin{equation*}
\begin{matrix}
-\frac{\partial }{\partial s}G_{D}[\lambda](0,0)=1, & \frac{\partial }{\partial s}G_{D}[\lambda](1,0)=0,\\
 G_{D}[\lambda](0,0)=0,&  G_{D}[\lambda](1,0)=0.
\end{matrix}
\end{equation*}

 By the uniqueness of the function $r_{1}[\lambda]$, it follows that 
\begin{equation*}
r_{1}[\lambda](t)=-\frac{\partial }{\partial s}G_{D}[\lambda](t,0).
\end{equation*}
Making similar arguments, we can deduce that
\begin{equation*}
\begin{array}{lll}
r_{2}[\lambda](t)=\frac{\partial }{\partial s}G_{D}[\lambda](t,1), \qquad & r_{3}[\lambda](t)=-\frac{\partial }{\partial s}G_{P}[\lambda](t,0), \qquad & r_{4}[\lambda](t)=G_{P}[\lambda](t,0),\\
r_{5}[\lambda](t)=G_{N}[\lambda](t,0),& r_{6}[\lambda](t)=-G_{N}[\lambda](t,1), & r_{7}[\lambda](t)=-\frac{\partial }{\partial s}G_{M_{2}}[\lambda](t,0),\\
 r_{8}[\lambda](t)=-G_{M_{2}}[\lambda](t,1), & r_{9}[\lambda](t)=G_{M_{1}}[\lambda](t,0),& r_{10}[\lambda](t)=- \frac{\partial}{\partial s} G_{M_{1}}[\lambda](t,1).\\
\end{array}
\end{equation*}
 \section{Decomposing Green's functions}
 This section is devoted to the study of the relationships between the expressions of  the Green's  functions related to problems \eqref{e-Neumann}, \eqref{e-Dirichlet}, \eqref{e-Mixed-1}, \eqref{e-Mixed-2} and \eqref{e-periodic}.
 
 To do this end, we will compare the different expressions by putting each boundary condition as a combination of the others. 
 
 Such expressions will be deduced from Lemma  \ref{e-sol-hi}. We pay attention to the fact that in this case we are considering the potential $a(t)$ and the definition on the interval $[0,1]$.  So, we make a different approach to the one given in \cite{kkkk} where the expressions are obtained for the corresponding extensions of the potential $a(t)$ to the intervals $[0,2]$ and $[0,4]$.  
 \subsection{Dirichlet and Periodic problems} 
In this subsection we study the relation between the Green's functions of Dirichlet and Periodic problems.
\begin{theorem}\label{th3}
 If operator $L[\lambda]$ is nonresonant both in $X_{D}$ and $X_{P}$, then it holds that 
 \begin{equation}\label{e-Gp-Gd}
 \small
 \begin{aligned}
 G_{P}[\lambda](t,s)&=G_{D}[\lambda](t,s)-\left(r_{1}[\lambda](t)+r_{2}[\lambda](t)\right)\, G_{P}[\lambda](1,s)\\
 &=G_{D}[\lambda](t,s)+\left(\frac{\partial }{\partial s}G_{D}[\lambda](t,1)-\frac{\partial }{\partial s}G_{D}[\lambda](t,0)\right)\, G_{P}[\lambda](1,s),\, \forall (t,s)\in I\times I.
 \end{aligned}
 \end{equation}
\end{theorem}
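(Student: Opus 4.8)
The plan is to realize the periodic Green's function as a perturbation of the Dirichlet one by applying the superposition formula of Lemma \ref{e-sol-hi} to the Dirichlet boundary operators $B_1(u)=u(0)$, $B_2(u)=u(1)$. The crucial observation is that for these operators the auxiliary functions $\omega_1,\omega_2$ furnished by Lemma \ref{e-sol-hi} are exactly $r_1[\lambda]$ and $r_2[\lambda]$ of \eqref{5} and \eqref{4}, since each solves $L[\lambda]u=0$ with the prescribed normalizations $u(0)=1,\,u(1)=0$ and $u(0)=0,\,u(1)=1$. Nonresonance of $L[\lambda]$ in $X_D$ (one of the hypotheses) guarantees the existence and uniqueness of $r_1[\lambda],r_2[\lambda]$ and hence of $G_D[\lambda]$, while nonresonance in $X_P$ guarantees that $G_P[\lambda]$ exists.

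First I would fix an arbitrary $\sigma\in L^1(I)$ and let $u_P(t)=\int_0^1 G_P[\lambda](t,s)\sigma(s)\,ds$ be the associated periodic solution, so that $L[\lambda]u_P=\sigma$ a.e.\ and $u_P(0)=u_P(1)$, $u_P'(0)=u_P'(1)$. The key step is to \emph{forget} the derivative condition and regard $u_P$ merely as a solution of $L[\lambda]u=\sigma$ carrying the Dirichlet-type data $B_1(u_P)=u_P(0)$, $B_2(u_P)=u_P(1)$. Since periodicity forces $u_P(0)=u_P(1)=:c$, both boundary values coincide, and Lemma \ref{e-sol-hi} (with $g=G_D[\lambda]$, $\omega_1=r_1[\lambda]$, $\omega_2=r_2[\lambda]$) yields, by uniqueness,
\begin{equation*}
u_P(t)=\int_0^1 G_D[\lambda](t,s)\,\sigma(s)\,ds+c\,\bigl(r_1[\lambda](t)+r_2[\lambda](t)\bigr).
\end{equation*}

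Next I would substitute $c=u_P(1)=\int_0^1 G_P[\lambda](1,s)\sigma(s)\,ds$ and $u_P(t)=\int_0^1 G_P[\lambda](t,s)\sigma(s)\,ds$, turning the identity into $\int_0^1 G_P[\lambda](t,s)\sigma(s)\,ds=\int_0^1\bigl[G_D[\lambda](t,s)+(r_1[\lambda](t)+r_2[\lambda](t))\,G_P[\lambda](1,s)\bigr]\sigma(s)\,ds$ for every $\sigma\in L^1(I)$. As both integrands are kernels tested against an arbitrary $\sigma$, a standard argument (continuity of the Green's functions in $s$ from Definition \ref{d-Green-Function}, or a density/fundamental-lemma argument in $L^1$) lets me equate the kernels, which gives the first displayed equality of Theorem \ref{th3}. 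Finally, inserting the already-established representations $r_1[\lambda](t)=-\frac{\partial}{\partial s}G_D[\lambda](t,0)$ and $r_2[\lambda](t)=\frac{\partial}{\partial s}G_D[\lambda](t,1)$ converts the coefficient $r_1[\lambda]+r_2[\lambda]$ into $\frac{\partial}{\partial s}G_D[\lambda](t,1)-\frac{\partial}{\partial s}G_D[\lambda](t,0)$, which is the second displayed equality.

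I expect the delicate points to be bookkeeping rather than conceptual. The main one is sign tracking: one must read off the superposition coefficients attached to $r_1[\lambda]$ and $r_2[\lambda]$ correctly and keep the orientation of the $s$-derivatives straight when passing to the boundary-derivative form. The second point is the passage from ``equal against every $\sigma\in L^1(I)$'' to ``equal as kernels'', which is routine but relies on sufficient regularity of $G_P[\lambda]$ and $G_D[\lambda]$ in $s$, guaranteed by the two nonresonance hypotheses. One could alternatively verify directly that the right-hand side satisfies the three defining properties of $G_P[\lambda]$ in Definition \ref{d-Green-Function}, but the superposition route is shorter and is the one suggested by the text; either way, no step requires more than uniqueness of solutions together with the superposition formula, so I anticipate no substantial obstacle.
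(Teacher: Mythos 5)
Your proposal is correct and follows essentially the same route as the paper: both apply Lemma \ref{e-sol-hi} with the Dirichlet Green's function to the periodic solution and read off its boundary values $u_P(0)=u_P(1)=\int_0^1 G_P[\lambda](1,s)\,\sigma(s)\,ds$ (the paper phrases the second condition as $u(1)=u(1)+u'(0)-u'(1)$ and cancels the derivative terms via the periodicity of $\frac{\partial}{\partial t}G_P[\lambda](\cdot,s)$, whereas you use $u_P(0)=u_P(1)$ directly, a harmless simplification). One remark: your computation, like the paper's own proof, yields the coefficient $+\left(r_1[\lambda](t)+r_2[\lambda](t)\right)G_P[\lambda](1,s)$, which is what the second line of \eqref{e-Gp-Gd} says after substituting $r_1[\lambda](t)=-\frac{\partial}{\partial s}G_D[\lambda](t,0)$ and $r_2[\lambda](t)=\frac{\partial}{\partial s}G_D[\lambda](t,1)$; the minus sign in the first line of the stated formula is a typo in the statement, so the equality you prove should be matched with the second displayed line rather than the first.
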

\begin{proof}
We express the Periodic problem \eqref{e-periodic} in function of the Dirichlet one \eqref{e-Dirichlet} as follows
\begin{equation}\label{12}
L[\lambda]\, u(t)=\sigma(t),\;\; \text{a.e.}\;\; t\in I,\;\; u(0)=u(1),\;\; u(1)=u(1)+u'(0)-u'(1).
\end{equation}
Then, using Lemma \ref{e-sol-hi}, we have that the solution of problem \eqref{12} is given by the following expression 
\begin{equation*}
\begin{aligned}
u_{P}(t)=&\displaystyle \int_{0}^{1} G_{P}[\lambda](t,s)\, \sigma(s)\, ds\\
=&\displaystyle \int_{0}^{1} G_{D}[\lambda](t,s)\, \sigma(s) \, ds+r_{1}[\lambda](t) u_{P}(1)+r_{2}[\lambda](t) \left(u_{P}(1)+u'_{P}(0)-u'_{P}(1)\right)\\
=&\displaystyle \int_{0}^{1} G_{D}[\lambda](t,s)\, \sigma(s) \,ds+r_{1}[\lambda](t) \displaystyle \int_{0}^{1} G_{P}[\lambda](1,s)\, \sigma(s) \,ds \\
&+r_{2}[\lambda](t) \displaystyle \int_{0}^{1} \left[G_{P}[\lambda](1,s)+ \frac{\partial }{\partial t}G_{P}[\lambda](0,s)-\frac{\partial }{\partial t} G_{P}[\lambda](1,s)\right]\, \sigma(s)\, ds\\
=&\displaystyle \int_{0}^{1}\left[G_{D}[\lambda](t,s)+\left(r_{1}[\lambda](t)+r_{2}[\lambda](t)\right)G_{P}[\lambda](1,s)\right]\,\sigma(s)\,ds,
\end{aligned}
\end{equation*} 
where the last equality follows from Definition \ref{d-Green-Function}, condition $(G6)$: $\frac{\partial}{\partial t} G_{P}[\lambda](0,s)=\frac{\partial }{\partial t}G_{P}[\lambda](1,s)$, $\forall s\in (0,1)$.
 
Since previous equalities hold for every $\sigma \in L^{1}(I)$, we obtain \eqref{e-Gp-Gd}.
 \end{proof} 
\begin{remark}
	We point out that, as a direct consequence of Lemma \ref{e-sol-hi}, we have that both $r_{1}[\lambda]$ and $r_{2}[\lambda]$ are uniquely determined. In fact, with  the notation used in Lemma \ref{e-sol-hi}, we have that $B_{1}(u)=u(0)$, $B_{2}(u)=u(1)$, $\omega_{1}=r_{1}[\lambda]$ and $\omega_{2}=r_{2}[\lambda]$.
\end{remark}

 Next, we study  the oscillation of the  functions $r_{1}[\lambda]$ and $r_{2}[\lambda]$ using the Sturm-Liouville theory of eigenvalues. 
 Let $\{\lambda_{n}^{D}\}_{n=0}^{\infty}$  be the sequence of eigenvalues of the Dirichlet problem:
 \begin{equation*}
 \left(D_{\lambda}\right)\quad 
 L[\lambda]\, u(t)=0,\;\; \text{a.e.}\;\; t\in I,\;\; u(0)=u(1)=0.
 \end{equation*}
  It is very well-known that $\lim\limits_{n\to \infty} \lambda_{n}^{D}=\infty$ (see \cite[Theorem 4.3.1]{Zetel}), and that any of the eigenvalues has a single associated eigenvector $v_{n}$, such that 
  \begin{equation*}
  \left(D_{n}\right)\quad 
  L[\lambda_{n}^{D}]\, v_{n}(t)=0,\;\; \text{a.e.}\;\; t\in I,\;\;
  v_{n}(0)=v_{n}(1)=0,
  \end{equation*} 
  with exactly $n$ zeros in $(0,1)$.
  
  Moreover, this eigenfunction satisfies that  $v'_{n}(0)\neq 0$. 
\begin{lemma}\label{autovalor2}
	Problem \eqref{5} has a unique solution if and only if $\lambda\neq \lambda_{n}^{D}$, $n=0,1,\ldots$.
\end{lemma}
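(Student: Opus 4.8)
The plan is to reduce the boundary value problem \eqref{5} to a $2\times 2$ algebraic system and to observe that the very same coefficient matrix controls the homogeneous Dirichlet problem $(D_\lambda)$, so that the two solvability questions become complementary cases of a Fredholm-type dichotomy. First I would fix a fundamental system $\{u_{1}[\lambda],u_{2}[\lambda]\}$ of solutions of the homogeneous equation $L[\lambda]\,u(t)=0$ on $I$, so that every solution of the equation can be written as $u=c_{1}\,u_{1}[\lambda]+c_{2}\,u_{2}[\lambda]$ with $c_{1},c_{2}\in\R$.

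Next, imposing on such a $u$ the conditions $u(0)=1$, $u(1)=0$ of problem \eqref{5} produces the linear system
\begin{equation*}
M(\lambda)\begin{pmatrix} c_{1}\\ c_{2}\end{pmatrix}=\begin{pmatrix}1\\ 0\end{pmatrix},\qquad M(\lambda)=\begin{pmatrix} u_{1}[\lambda](0) & u_{2}[\lambda](0)\\ u_{1}[\lambda](1) & u_{2}[\lambda](1)\end{pmatrix}.
\end{equation*}
This system is uniquely solvable in $(c_{1},c_{2})$ --- equivalently, \eqref{5} has a unique solution --- precisely when $\det M(\lambda)\neq 0$; when $\det M(\lambda)=0$ the right-hand side either renders the system inconsistent or leaves it underdetermined, so uniqueness fails in both cases. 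The key point is that the \emph{same} matrix $M(\lambda)$ governs the homogeneous Dirichlet problem $u(0)=u(1)=0$, whose nontrivial solutions correspond exactly to the nonzero elements of $\ker M(\lambda)$; hence $(D_\lambda)$ admits a nontrivial solution if and only if $\det M(\lambda)=0$.

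Finally, by definition the Dirichlet spectrum $\{\lambda_{n}^{D}\}_{n=0}^{\infty}$ consists precisely of those $\lambda$ for which $(D_\lambda)$ has a nontrivial solution, that is, those $\lambda$ with $\det M(\lambda)=0$. Chaining the equivalences yields that \eqref{5} has a unique solution $\iff \det M(\lambda)\neq 0 \iff \lambda\notin\{\lambda_{n}^{D}\}_{n=0}^{\infty}$, which is the claim. I would also remark that this agrees with Lemma \ref{e-sol-hi} applied with $B_{1}(u)=u(0)$ and $B_{2}(u)=u(1)$: the function $r_{1}[\lambda]$ is exactly the $\omega_{1}$ appearing there, and its existence is tied to the nonresonance of the Dirichlet problem. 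There is no serious obstacle in this argument; the only step deserving care is the identification of the zero set of $\det M(\lambda)$ with the eigenvalue sequence $\{\lambda_{n}^{D}\}$, which is guaranteed by the Sturm--Liouville spectral description recalled just before the statement (each $\lambda_{n}^{D}$ being a simple eigenvalue with associated eigenfunction $v_{n}$).
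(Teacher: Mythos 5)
Your argument is correct. The paper itself gives no written proof of Lemma \ref{autovalor2}: it only remarks that the statement is a corollary of Lemma \ref{e-sol-hi}, i.e.\ of the equivalence between unique solvability of the Green's function problem with homogeneous Dirichlet conditions (which holds exactly off the Dirichlet spectrum) and unique solvability of the two auxiliary problems \eqref{5} and \eqref{4} with data $(1,0)$ and $(0,1)$. What you do is essentially unpack that abstract equivalence into its elementary content: writing the general solution of $L[\lambda]u=0$ in a fundamental system reduces both \eqref{5} and the homogeneous Dirichlet problem $(D_\lambda)$ to the same $2\times 2$ matrix $M(\lambda)$, and the Fredholm dichotomy for $M(\lambda)$ gives the claim in one stroke. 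Your route is self-contained and slightly more informative (it shows directly that when $\lambda=\lambda_n^D$ either existence or uniqueness for \eqref{5} fails, since any eigenfunction of $(D_\lambda)$ can be added to a solution of \eqref{5}), whereas the paper's route gets the same conclusion for free from the already-stated Lemma \ref{e-sol-hi} at the cost of leaving the mechanism implicit. The only point worth stating explicitly in your write-up is the one you already flag: that the zero set of $\det M(\lambda)$ is exactly the sequence $\{\lambda_n^D\}_{n=0}^{\infty}$, which is precisely the definition of the Dirichlet eigenvalues recalled before the lemma.
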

\begin{lemma}\label{signo2}
	The unique solution $r_{1}[\lambda]$ of problem \eqref{5} has exactly $n$ zeros in $(0,1)$ if and only if $\lambda\in (\lambda_{n-1}^{D},\lambda_{n}^{D})$, $n=1,2,\ldots,$ and $r_{1}[\lambda]>0$ on $[0,1)$ if and only if $\lambda<\lambda_{0}^{D}$.
	In addition, $\left(-1\right)^{n} r'_{1}(1)<0$ for all $\lambda\in (\lambda_{n-1}^{D},\lambda_{n}^{D})$, $n=1,2,\ldots,$ and $r'_{1}[\lambda](1)<0$, for all $\lambda<\lambda_{0}^{D}$.
\end{lemma}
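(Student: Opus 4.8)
The plan is to count the zeros of $r_1[\lambda]$ by oscillation theory, after replacing $r_1[\lambda]$ by a solution whose Cauchy data are independent of $\lambda$. Since $r_1[\lambda](1)=0$ and, by uniqueness of the initial value problem, $r_1'[\lambda](1)\neq 0$ (otherwise $r_1[\lambda]\equiv 0$, contradicting $r_1[\lambda](0)=1$), the space of solutions of $L[\lambda]u=0$ vanishing at $t=1$ is one--dimensional and, whenever $\lambda\neq\lambda_n^D$, it is spanned by $r_1[\lambda]$. Hence the solution $\psi_\lambda$ of $L[\lambda]\psi=0$ with $\psi_\lambda(1)=0$, $\psi_\lambda'(1)=-1$ satisfies $\psi_\lambda=\mu(\lambda)\,r_1[\lambda]$ for some nonzero scalar $\mu(\lambda)$. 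In particular $\psi_\lambda$ and $r_1[\lambda]$ have exactly the same zeros, $\mu(\lambda)=\psi_\lambda(0)/r_1[\lambda](0)=\psi_\lambda(0)$ (using $r_1[\lambda](0)=1$), and $r_1'[\lambda](1)=-1/\mu(\lambda)$. The gain is that $\psi_\lambda$ is now prescribed by a $\lambda$--independent Cauchy problem at $t=1$, so continuous dependence and the Pr\"ufer machinery apply cleanly.

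Next I would introduce the Pr\"ufer variables $\psi_\lambda=\rho\sin\theta$, $\psi_\lambda'=\rho\cos\theta$, which turn the equation into $\theta'=\cos^2\theta+(a(t)+\lambda)\sin^2\theta$, with the normalization $\theta(1,\lambda)=\pi$ (compatible with $\psi_\lambda(1)=0$, $\psi_\lambda'(1)<0$). Two standard facts drive the proof. First, wherever $\theta\in\pi\mathbb{Z}$ one has $\theta'=\cos^2\theta=1>0$, so $\theta$ crosses each multiple of $\pi$ only upward; consequently the number of zeros of $\psi_\lambda$ in $(0,1)$ equals the number of integers $k$ with $\theta(0,\lambda)<k\pi<\pi$, which is $n$ precisely when $\theta(0,\lambda)\in(-n\pi,-(n-1)\pi)$ for $n\geq 1$, and is $0$ when $\theta(0,\lambda)\in(0,\pi)$. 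Second, differentiating the $\theta$--equation in $\lambda$ with the fixed terminal value $\theta(1,\lambda)=\pi$ gives a linear variational equation with forcing $\sin^2\theta\geq 0$ and zero datum at $t=1$, whose solution at $t=0$ is $\partial_\lambda\theta(0,\lambda)=-\int_0^1(\text{positive integrating factor})\,\sin^2\theta\,ds<0$. Thus $\lambda\mapsto\theta(0,\lambda)$ is continuous and strictly decreasing.

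It remains to anchor $\theta(0,\lambda)$ to the Dirichlet spectrum. Now $\theta(0,\lambda)\in\pi\mathbb{Z}$ holds exactly when $\psi_\lambda(0)=0$, i.e. when $\psi_\lambda$ vanishes at both endpoints and is therefore a Dirichlet eigenfunction; by Lemma~\ref{autovalor2} this occurs exactly at $\lambda=\lambda_n^D$, where $\psi_\lambda$ is a multiple of $v_n$ and so has exactly $n$ interior zeros. Reading this count through the previous paragraph forces $\theta(0,\lambda_n^D)=-n\pi$ (the $n+2$ zeros of $v_n$ in $[0,1]$ correspond to $\theta=k\pi$ for $k=1,0,\dots,-n$). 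Since $\theta(0,\cdot)$ is strictly decreasing and meets $-n\pi$ exactly at $\lambda_n^D$, on $(\lambda_{n-1}^D,\lambda_n^D)$ we get $\theta(0,\lambda)\in(-n\pi,-(n-1)\pi)$, so $r_1[\lambda]$ has exactly $n$ zeros in $(0,1)$; and for $\lambda<\lambda_0^D$ we get $\theta(0,\lambda)\in(0,\pi)$, so $r_1[\lambda]$ has no interior zero and, since $r_1[\lambda](0)=1>0$, it is positive on $[0,1)$ (conversely $\lambda\ge\lambda_0^D$ produces an interior zero, giving the ``if and only if''). Finally, on $(-n\pi,-(n-1)\pi)$ one has $\mathrm{sign}\,\sin\theta(0,\lambda)=(-1)^n$, whence $\mathrm{sign}\,\mu(\lambda)=\mathrm{sign}\,\psi_\lambda(0)=(-1)^n$ and $\mathrm{sign}\,r_1'[\lambda](1)=-\mathrm{sign}\,\mu(\lambda)=(-1)^{n+1}$, that is $(-1)^n r_1'[\lambda](1)<0$; the case $n=0$ (with the convention $\lambda_{-1}^D=-\infty$) reads $r_1'[\lambda](1)<0$ for $\lambda<\lambda_0^D$.

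The main obstacle is the orientation bookkeeping: choosing the terminal normalization so that the Pr\"ufer angle is monotone in $\lambda$ with the right sign, and matching the crossings $\theta(0,\lambda)\in\pi\mathbb{Z}$ one-to-one and in the correct order with the eigenvalues $\lambda_n^D$, using that $v_n$ has exactly $n$ interior zeros and $v_n'(0)\neq 0$. Once that correspondence is established, both the zero count and the sign of $r_1'[\lambda](1)$ follow simply by identifying the interval $(-n\pi,-(n-1)\pi)$ that contains $\theta(0,\lambda)$.
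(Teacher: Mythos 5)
Your argument is correct. The paper gives no actual proof of this lemma beyond a one-line remark that it ``follows from Sturm's comparison theorem''; your Pr\"ufer-angle computation is exactly the standard machinery behind that remark --- the terminal normalization $\theta(1,\lambda)=\pi$, the upward-crossing property at multiples of $\pi$, the strict monotonicity of $\theta(0,\cdot)$ in $\lambda$, and the anchoring $\theta(0,\lambda_{n}^{D})=-n\pi$ via the zero count of $v_{n}$ are all sound, and they also yield the sign of $r_{1}'[\lambda](1)$ through $r_{1}'[\lambda](1)=-1/\psi_{\lambda}(0)$.
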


\begin{lemma}\label{lema1}
	Problem \eqref{4} has a unique solution if and only if $\lambda\neq \lambda_{n}^{D}$, $n=0,1,\ldots$.
\end{lemma}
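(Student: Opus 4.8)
The plan is to show that solvability of problem \eqref{4} is controlled by exactly the same determinant that characterizes the Dirichlet eigenvalues, so that this lemma becomes the twin of Lemma \ref{autovalor2} with the roles of the endpoints $0$ and $1$ interchanged. Indeed, problem \eqref{4} differs from problem \eqref{5} only in that the boundary data $u(0)=1,\,u(1)=0$ are replaced by $u(0)=0,\,u(1)=1$; since both prescribe the pair $(u(0),u(1))$ of pointwise values, their unique solvability is governed by the same nonsingularity condition, and the argument of Lemma \ref{autovalor2} transfers verbatim.

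Concretely, I would fix a fundamental system $\{u_1,u_2\}$ of solutions of the homogeneous equation $L[\lambda]\,u=0$ on $I$ and write any prospective solution as $u=c_1\,u_1+c_2\,u_2$. Imposing the conditions $u(0)=0$ and $u(1)=1$ yields the linear system
\[
\begin{pmatrix} u_1(0) & u_2(0)\\ u_1(1) & u_2(1)\end{pmatrix}\begin{pmatrix} c_1\\ c_2\end{pmatrix}=\begin{pmatrix} 0\\ 1\end{pmatrix},
\]
which admits a unique solution $(c_1,c_2)$ precisely when
\[
\Delta(\lambda):=\det\begin{pmatrix} u_1(0) & u_2(0)\\ u_1(1) & u_2(1)\end{pmatrix}\neq 0.
\]

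The final step is to identify the condition $\Delta(\lambda)=0$ with $\lambda$ being a Dirichlet eigenvalue. The homogeneous Dirichlet problem $L[\lambda]\,u=0$, $u(0)=u(1)=0$, admits a nontrivial solution if and only if the same coefficient matrix is singular, that is, if and only if $\Delta(\lambda)=0$; by definition of the sequence $\{\lambda_n^D\}$ this occurs exactly at $\lambda=\lambda_n^D$, $n=0,1,\ldots$. Hence problem \eqref{4} has a unique solution if and only if $\lambda\neq\lambda_n^D$ for all $n$. Equivalently, one may invoke Lemma \ref{e-sol-hi}, noting that $r_2[\lambda]$ is precisely the auxiliary function $\omega_2$ associated to the Dirichlet boundary operators $B_1(u)=u(0)$, $B_2(u)=u(1)$, so that its unique solvability is tied to nonresonance of $L[\lambda]$ in $X_D$.

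I expect no serious obstacle here: the only point requiring care is the observation that a single common determinant $\Delta(\lambda)$ simultaneously governs the solvability of the inhomogeneous problem \eqref{4} and the existence of a nontrivial Dirichlet eigenfunction, which is the routine Fredholm-type argument already employed in Lemma \ref{autovalor2}.
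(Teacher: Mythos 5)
Your proof is correct and follows essentially the same route as the paper, which disposes of this statement in a one-line remark asserting that Lemmas \ref{autovalor2} and \ref{lema1} are corollaries of Lemma \ref{e-sol-hi}: in both cases the point is that unique solvability of problem \eqref{4} is equivalent to nonresonance of the homogeneous Dirichlet problem, i.e.\ to $\lambda$ not being a Dirichlet eigenvalue. Your explicit fundamental-system determinant $\Delta(\lambda)$ merely spells out the Fredholm alternative that the paper leaves implicit, and your closing appeal to Lemma \ref{e-sol-hi} with $B_1(u)=u(0)$, $B_2(u)=u(1)$ and $\omega_2=r_2[\lambda]$ is exactly the paper's argument.
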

\begin{lemma}\label{signo1}
	The unique solution  of problem \eqref{4}  $r_{2}[\lambda]$ has exactly $n$ zeros in $\left(0,1\right)$ if and only if $\lambda\in (\lambda_{n-1}^{D},\lambda_{n}^{D})$, $n=1,2,\ldots$ and $r_{2}[\lambda]>0$ on $(0,1]$ if and only if $\lambda<\lambda_{0}^{D}$.
	In addition, $\left(-1\right)^{n} r'_{2}[\lambda](0)>0$ for all $\lambda\in (\lambda_{n-1}^{D},\lambda_{n}^{D})$, $n=1,2,\ldots$ and $r'_{2}[\lambda](0)>0$, for all $\lambda<\lambda_{0}^{D}$.
\end{lemma}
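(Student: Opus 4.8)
The plan is to reduce the statement to Lemma~\ref{signo2} (which governs the oscillation of $r_1[\lambda]$) by means of the reflection $t\mapsto 1-t$. I first introduce the reflected potential $\widetilde a(s):=a(1-s)\in L^1(I)$ and the operator $\widetilde L[\lambda]\,w(s):=w''(s)+(\widetilde a(s)+\lambda)\,w(s)$. A direct computation shows that whenever $u$ solves $L[\lambda]\,u=0$ a.e. on $I$, the function $w(s):=u(1-s)$ solves $\widetilde L[\lambda]\,w=0$ a.e. on $I$, with $w(0)=u(1)$, $w(1)=u(0)$, $w'(0)=-u'(1)$ and $w'(1)=-u'(0)$. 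Taking $u=r_2[\lambda]$, whose data are $u(0)=0$, $u(1)=1$ by~\eqref{4}, the reflected $w$ satisfies $w(0)=1$, $w(1)=0$; hence $w$ solves problem~\eqref{5} for the reflected potential $\widetilde a$, i.e., it is the analogue of $r_1[\lambda]$ for $\widetilde a$. In short, $r_2[\lambda](t)=w(1-t)$ for all $t\in I$, and consequently $r_2'[\lambda](0)=-w'(1)$.

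Next I record that the Dirichlet spectrum is invariant under this reflection. If $v_n$ is a Dirichlet eigenfunction of $L$ for $\lambda_n^D$ with exactly $n$ zeros in $(0,1)$, then $v_n(1-\cdot)$ is a Dirichlet eigenfunction of $\widetilde L$ for the same value $\lambda_n^D$, again with exactly $n$ interior zeros; since the reflection is a bijection of $(0,1)$ preserving the number of zeros, the $n$-th Dirichlet eigenvalue $\widetilde\lambda_n^D$ of $\widetilde a$ equals $\lambda_n^D$ for every $n$. Applying Lemma~\ref{signo2} to the potential $\widetilde a$ then yields, verbatim for $w$: the function $w$ has exactly $n$ zeros in $(0,1)$ iff $\lambda\in(\widetilde\lambda_{n-1}^D,\widetilde\lambda_n^D)=(\lambda_{n-1}^D,\lambda_n^D)$; $w>0$ on $[0,1)$ iff $\lambda<\widetilde\lambda_0^D=\lambda_0^D$; and $(-1)^n w'(1)<0$ on $(\lambda_{n-1}^D,\lambda_n^D)$, with $w'(1)<0$ for $\lambda<\lambda_0^D$.

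Finally I transport these conclusions back through $r_2[\lambda](t)=w(1-t)$. Since $t\mapsto 1-t$ is a bijection of $(0,1)$ onto itself, $r_2[\lambda]$ and $w$ have the same number of interior zeros, which gives the zero-counting assertion; since it maps $[0,1)$ onto $(0,1]$, positivity of $w$ on $[0,1)$ is equivalent to $r_2[\lambda]>0$ on $(0,1]$. For the derivative I use $r_2'[\lambda](0)=-w'(1)$: the inequality $(-1)^n w'(1)<0$ becomes $(-1)^n r_2'[\lambda](0)>0$, and $w'(1)<0$ becomes $r_2'[\lambda](0)>0$, exactly as claimed. The existence and uniqueness of $r_2[\lambda]$ for $\lambda\neq\lambda_n^D$ is precisely Lemma~\ref{lema1}.

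The only delicate point is the reflection dictionary itself: keeping the minus signs in $w'(0)=-u'(1)$ and $w'(1)=-u'(0)$ correct, and verifying $\widetilde\lambda_n^D=\lambda_n^D$ with matching zero counts; after that, every assertion is a transcription of Lemma~\ref{signo2}. If one preferred not to alter the potential, the same conclusions would follow from a self-contained Sturm oscillation argument: writing $r_2[\lambda]=\varphi[\lambda]/\varphi[\lambda](1)$, where $\varphi[\lambda]$ solves $L[\lambda]\varphi=0$ with $\varphi(0)=0$, $\varphi'(0)=1$, one has $r_2'[\lambda](0)=1/\varphi[\lambda](1)$, and the crux becomes showing $\operatorname{sgn}\varphi[\lambda](1)=(-1)^n$ on $(\lambda_{n-1}^D,\lambda_n^D)$ together with the fact that each zero of $\varphi[\lambda]$ enters $(0,1)$ precisely as $\lambda$ crosses a Dirichlet eigenvalue. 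I expect this sign-tracking to be the main obstacle in the direct route, whereas the reflection route reduces it entirely to Lemma~\ref{signo2}.
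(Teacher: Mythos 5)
Your proof is correct, but it is organized differently from the paper's. The paper does not actually write out a proof of this lemma: it disposes of Lemmas \ref{signo2} and \ref{signo1} together in a one-line remark stating that both ``follow from Sturm's comparison theorem'', i.e.\ the intended argument is a direct oscillation/comparison argument run independently for $r_1[\lambda]$ and $r_2[\lambda]$ (essentially the sign-tracking of $\varphi[\lambda](1)$ that you sketch in your closing paragraph). What you do instead is derive Lemma \ref{signo1} from Lemma \ref{signo2} by the reflection $t\mapsto 1-t$: you check that $w:=r_2[\lambda](1-\cdot)$ is the $r_1$-type solution for the reflected potential $\widetilde a=a(1-\cdot)$, that the Dirichlet spectrum and the interior zero counts are invariant under the reflection, and that the endpoint-derivative identity $r_2'[\lambda](0)=-w'(1)$ converts $(-1)^n w'(1)<0$ into $(-1)^n r_2'[\lambda](0)>0$; all of these steps, including the sign bookkeeping, are right. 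The one tacit point is that Lemma \ref{signo2} must be read as valid for an arbitrary $L^1$ potential (so that it applies to $\widetilde a$), which is harmless since its proof is potential-independent. Your route buys economy --- the oscillation argument is run once and the second lemma comes for free by symmetry --- at the cost of invoking the spectral invariance under reflection; the paper's (implicit) route treats the two lemmas symmetrically and needs no auxiliary operator $\widetilde L[\lambda]$.
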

\begin{remark}
Lemmas \ref{autovalor2} and \ref{lema1} are a corollary of Lemma \ref{e-sol-hi}. Theorems \ref{signo1} and \ref{signo2} follow from Sturm's comparison theorem.	
\end{remark}
As a direct consequence of equality \eqref{e-Gp-Gd}, we deduce the following comparison between the values of the Green's functions related to the Dirichlet and Periodic problems.
\begin{theorem}\label{teorema1}
 The following inequality holds 
\begin{equation}\label{e-des-Gp-Gd}
G_{P}[\lambda](t,s)<G_{D}[\lambda](t,s)<0,\;\; \forall (t,s) \in  (0,1)\times (0,1),\;\;\forall \lambda<\lambda_{0}^{P}.
\end{equation}
\end{theorem}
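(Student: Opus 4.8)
The plan is to obtain the whole chain \eqref{e-des-Gp-Gd} directly from the decomposition \eqref{e-Gp-Gd} of Theorem \ref{th3}, reducing it to a sign analysis of the three factors that appear there. Rewriting \eqref{e-Gp-Gd} as
\begin{equation*}
G_{P}[\lambda](t,s)-G_{D}[\lambda](t,s)=\bigl(r_{1}[\lambda](t)+r_{2}[\lambda](t)\bigr)\,G_{P}[\lambda](1,s),
\end{equation*}
where the factor $r_{1}[\lambda](t)+r_{2}[\lambda](t)$ is exactly $\frac{\partial}{\partial s}G_{D}[\lambda](t,1)-\frac{\partial}{\partial s}G_{D}[\lambda](t,0)$, the statement splits into three claims to be verified for every $\lambda<\lambda_{0}^{P}$: that $G_{D}[\lambda](t,s)<0$ on $(0,1)\times(0,1)$, that $r_{1}[\lambda](t)+r_{2}[\lambda](t)>0$ on $[0,1]$, and that $G_{P}[\lambda](1,s)<0$ for $s\in(0,1)$. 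Granting these, the right-hand side above is a product of a strictly positive and a strictly negative quantity, hence strictly negative, which yields $G_{P}[\lambda](t,s)<G_{D}[\lambda](t,s)$, while the first claim gives $G_{D}[\lambda](t,s)<0$.

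First I would dispose of the two Dirichlet-type factors, both of which keep their sign on the larger range $\lambda<\lambda_{0}^{D}$. The positivity of $r_{1}[\lambda]+r_{2}[\lambda]$ follows from Lemmas \ref{signo2} and \ref{signo1}: for $\lambda<\lambda_{0}^{D}$ one has $r_{1}[\lambda]>0$ on $[0,1)$ and $r_{2}[\lambda]>0$ on $(0,1]$, and since $r_{1}[\lambda](0)=1$, $r_{1}[\lambda](1)=0$, $r_{2}[\lambda](0)=0$, $r_{2}[\lambda](1)=1$, the sum stays strictly positive on all of $[0,1]$, including the endpoints. The strict negativity of $G_{D}[\lambda]$ on $(0,1)\times(0,1)$ for $\lambda<\lambda_{0}^{D}$ is the standard constant-sign property of the Dirichlet Green's function below its first eigenvalue; it can be read off from the two-branch expression of $G_{D}[\lambda]$ built from $r_{1}[\lambda]$ and $r_{2}[\lambda]$ together with the sign and boundary-slope information in Lemmas \ref{signo2} and \ref{signo1}, or alternatively from the maximum principle for $L[\lambda]$ in this range. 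Both facts are then available under $\lambda<\lambda_{0}^{P}$ once we know $\lambda_{0}^{P}\le\lambda_{0}^{D}$.

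The decisive and most delicate step is the sign of the periodic factor $G_{P}[\lambda](1,s)$, and this is where the hypothesis $\lambda<\lambda_{0}^{P}$ is genuinely used: the periodic Green's function is sign-definite precisely below the first periodic eigenvalue, whereas the Dirichlet quantities survive up to $\lambda_{0}^{D}$. I would argue that $G_{P}[\lambda]$ depends continuously on $\lambda$, is strictly negative for $\lambda$ sufficiently negative (where $L[\lambda]$ is a perturbation of $u''-\mu^{2}u$ whose periodic Green's function is negative), and cannot lose its constant sign on the interval $(-\infty,\lambda_{0}^{P})$, since such a loss is tied to crossing a periodic eigenvalue; hence $G_{P}[\lambda]<0$ on $(0,1)\times(0,1)$, and in particular $G_{P}[\lambda](1,s)<0$. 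This appeal to periodic Sturm–Liouville theory (simplicity of $\lambda_{0}^{P}$ with a nodeless, positive associated eigenfunction, and constancy of the sign of the Green's function between consecutive eigenvalues) is the crux of the argument; the same circle of ideas also delivers the comparison $\lambda_{0}^{P}\le\lambda_{0}^{D}$, because the first periodic eigenfunction is positive while the first Dirichlet eigenfunction must vanish at the endpoints.

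Finally I would assemble the pieces: for $\lambda<\lambda_{0}^{P}$ we have $\lambda<\lambda_{0}^{D}$, so $r_{1}[\lambda](t)+r_{2}[\lambda](t)>0$ and $G_{D}[\lambda](t,s)<0$, while $G_{P}[\lambda](1,s)<0$; substituting into the displayed identity gives $G_{P}[\lambda](t,s)-G_{D}[\lambda](t,s)<0$, and combining with $G_{D}[\lambda](t,s)<0$ produces \eqref{e-des-Gp-Gd}. The main obstacle is therefore not the algebra of the decomposition but the spectral input of the third step: proving that $G_{P}[\lambda]$ has constant negative sign exactly for $\lambda<\lambda_{0}^{P}$ and that $\lambda_{0}^{P}\le\lambda_{0}^{D}$.
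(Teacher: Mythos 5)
Your proposal is correct and follows essentially the same route as the paper: both rest on the decomposition \eqref{e-Gp-Gd}, the positivity of $r_{1}[\lambda]+r_{2}[\lambda]$ obtained from Lemmas \ref{signo2} and \ref{signo1}, the negativity of $G_{P}[\lambda]$ and $G_{D}[\lambda]$ below their first eigenvalues, and the comparison $\lambda_{0}^{P}<\lambda_{0}^{D}$. The only difference is that the paper imports the three sign facts by citation (to \cite{kkkk} and \cite{A}) rather than sketching the spectral arguments as you do.
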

\begin{proof} 
 It is immediate to verify  that the  function $r[\lambda](t):=r_{1}[\lambda](t)+r_{2}[\lambda](t)$ solves the following problem 
 \begin{equation*}
 L[\lambda]\, r[\lambda](t)=0,\;\; \text{a.e.} \;\;t\in I,\;\; r[\lambda](0)=r[\lambda](1)=1.
 \end{equation*}
From Lemmas \ref{signo2} and \ref{signo1}, it is obvious that if $\lambda<\lambda_{0}^{D}$ then $r[\lambda](t)>0$ for all $t\in I$.

Moreover, we know  that $G_{P}[\lambda]$ is negative on $I\times I$ for all $\lambda<\lambda_{0}^{P}$ and $G_{D}[\lambda]$ is negative on $(0,1)\times (0,1)$ for all $\lambda<\lambda_{0}^{D}$ (see (\cite[Lemma 2.9]{kkkk}). In addition, $\lambda_{0}^{P} < \lambda_{0}^{D}$ (\cite[page 44]{A}).

As $r[\lambda](t)>0$ for all $t\in I$ when $\lambda<\lambda_{0}^{D}$, using \eqref{e-Gp-Gd}, we obtain the result.
\end{proof}
\begin{remark}
 From equality \eqref{e-Gp-Gd} it follows that 
 \begin{equation*}
 \dfrac{G_{D}[\lambda](t,s)-G_{P}[\lambda](t,s)}{G_{P}[\lambda](1,s)}=-r[\lambda](t),
 \end{equation*}
 if $\lambda$ is not a eigenvalue of the Dirichlet and Periodic problems.
 
  Deriving the above equality with respect to $s$, we obtain the following identity
 \begin{equation*}
 \footnotesize
 \frac{\partial}{\partial s} \left(G_{D}[\lambda](t,s)-G_{P}[\lambda](t,s)\right) G_{P}[\lambda](1,s)=\left(G_{D}[\lambda](t,s)-G_{P}[\lambda](t,s)\right) \frac{\partial }{\partial s} G_{P}[\lambda](1,s),\;\; \forall (t,s)\in I\times I.
 \end{equation*}
\end{remark}

In the sequel we will do an alternative study to the one done in Theorem  \ref{th3}. In this case by considering the Dirichlet conditions as a combination of the periodic ones.

 Let us write the Dirichlet problem as a function of the periodic problem as follows 
 \begin{equation*}
 L[\lambda]\, u(t)=\sigma(t),\;\;\text{a.e.}\;\; t\in I,\;\; u(0)-u(1)=-u(1),\;\; u'(0)-u'(1)=u'(0)-u'(1)+u(1).
 \end{equation*}
 Taking into account that  $r_{4}[\lambda](t)=G_{P}[\lambda](t,0)$ solves \eqref{19}, performing the calculations in an analogous way than before, using Lemma \ref{e-sol-hi} the following result is attained.
 \begin{theorem}
 Assume that operator $L[\lambda]$ is nonresonant both in $X_{D}$ and $X_{P}$, then it holds that  
 \begin{equation}\label{e-GD-GP}
 \small
 \begin{aligned}
 G_{D}[\lambda](t,s)=&G_{P}[\lambda](t,s)+r_{4}[\lambda](t)\, \left(\frac{\partial}{\partial t} G_{D}[\lambda](0,s)-\frac{\partial }{\partial t}G_{D}[\lambda](1,s)\right)\\
 =&G_{P}[\lambda](t,s)+G_{P}[\lambda](t,0)\,  \left(\frac{\partial }{\partial t}G_{D}[\lambda](0,s)-\frac{\partial }{\partial t}G_{D}[\lambda](1,s)\right),\;\; \forall (t,s)\in I\times I.
 \end{aligned}
 \end{equation}
 \end{theorem}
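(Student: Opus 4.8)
The plan is to mirror verbatim the argument of Theorem \ref{th3}, but with the roles of the two problems interchanged: I would apply Lemma \ref{e-sol-hi} taking as boundary functionals the two \emph{periodic} ones, $B_1(u)=u(0)-u(1)$ and $B_2(u)=u'(0)-u'(1)$. With this choice the Green's function of the homogeneous problem $B_1(u)=B_2(u)=0$ is $G_P[\lambda]$, and the functions $\omega_1,\omega_2$ furnished by Lemma \ref{e-sol-hi} are precisely $r_3[\lambda]$ (the solution of \eqref{39}) and $r_4[\lambda]$ (the solution of \eqref{19}). Hence, for the unique solution $u_D$ of the Dirichlet problem with datum $\sigma$, Lemma \ref{e-sol-hi} gives
\begin{equation*}
u_D(t)=\int_0^1 G_P[\lambda](t,s)\,\sigma(s)\,ds+B_1(u_D)\,r_3[\lambda](t)+B_2(u_D)\,r_4[\lambda](t).
\end{equation*}

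Next I would evaluate the two functionals on $u_D$. Because $u_D$ obeys the Dirichlet conditions $u_D(0)=u_D(1)=0$, one has $B_1(u_D)=u_D(0)-u_D(1)=0$ and $B_2(u_D)=u'_D(0)-u'_D(1)$; this is exactly the content of the rewriting $u(0)-u(1)=-u(1)$, $u'(0)-u'(1)=u'(0)-u'(1)+u(1)$ displayed just before the statement, in which the added summands $\pm u_D(1)$ vanish. The vanishing of $B_1(u_D)$ suppresses the $r_3[\lambda]$ term, which is why only $r_4[\lambda]$ survives in \eqref{e-GD-GP}. To turn $B_2(u_D)$ into an integral against $\sigma$, I would use $u_D(t)=\int_0^1 G_D[\lambda](t,s)\,\sigma(s)\,ds$ and differentiate in $t$ under the integral sign, obtaining
\begin{equation*}
B_2(u_D)=u'_D(0)-u'_D(1)=\int_0^1\left(\frac{\partial}{\partial t}G_D[\lambda](0,s)-\frac{\partial}{\partial t}G_D[\lambda](1,s)\right)\sigma(s)\,ds.
\end{equation*}

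Substituting this into the expression for $u_D$ and collecting integrands yields
\begin{equation*}
\int_0^1 G_D[\lambda](t,s)\,\sigma(s)\,ds=\int_0^1\left[G_P[\lambda](t,s)+r_4[\lambda](t)\left(\frac{\partial}{\partial t}G_D[\lambda](0,s)-\frac{\partial}{\partial t}G_D[\lambda](1,s)\right)\right]\sigma(s)\,ds.
\end{equation*}
Since this holds for every $\sigma\in L^1(I)$, the two kernels coincide, which is the first equality in \eqref{e-GD-GP}; the second equality is then immediate from the identification $r_4[\lambda](t)=G_P[\lambda](t,0)$ recorded in the Preliminaries.

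I expect the only delicate points to be, first, the bookkeeping that makes the $r_3[\lambda]$ contribution disappear — here the clean cancellation rests on the boundary behaviour $G_D[\lambda](1,s)=0$ of the Dirichlet Green's function, which plays the role that property $(G6)$ played in the proof of Theorem \ref{th3} — and, second, the justification that $t\mapsto u_D(t)=\int_0^1 G_D[\lambda](t,s)\,\sigma(s)\,ds$ may be differentiated under the integral, so that $u'_D(0)-u'_D(1)$ is represented by the claimed kernel; this is legitimate because the points $t=0$ and $t=1$ lie off the diagonal for $s\in(0,1)$, where $G_D[\lambda]$ is $C^2$ and $\partial_t G_D[\lambda](0,s)$, $\partial_t G_D[\lambda](1,s)$ depend continuously on $s$. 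Once the coefficient-matching over all $\sigma$ is invoked, no further computation is needed.
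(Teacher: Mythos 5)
Your proposal is correct and follows essentially the same route as the paper: the paper's (sketched) proof likewise rewrites the Dirichlet conditions in terms of the periodic functionals $B_1(u)=u(0)-u(1)$, $B_2(u)=u'(0)-u'(1)$, applies Lemma \ref{e-sol-hi} with $\omega_1=r_3[\lambda]$, $\omega_2=r_4[\lambda]$, and concludes by matching kernels for arbitrary $\sigma\in L^1(I)$. You have merely filled in the details the paper leaves as ``analogous to Theorem \ref{th3}'', including the correct observation that $B_1(u_D)=0$ kills the $r_3[\lambda]$ term.
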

 \begin{remark}
 	Notice that, if $\lambda<\lambda_{0}^{D}$ we have that $G_{D}[\lambda]<0$ on $(0,1)\times (0,1)$ and, as a consequence  $$\frac{\partial }{\partial t} G_{D}[\lambda](0,s)<0<\frac{\partial }{\partial t} G_{D}[\lambda](1,s),\quad s\in (0,1).$$
 	
 	Moreover, if $\lambda<\lambda_{0}^{D}$ then $G_{P}[\lambda]<0$ on $I\times I$. So, from \eqref{e-GD-GP} and the fact that $\lambda_{0}^{P}<\lambda_{0}^{D}$ we deduce the inequality \eqref{e-des-Gp-Gd} again.
 \end{remark}
\subsection{Dirichlet and Neumann problems}
In this section we continue the work done in previous section. In this case we will consider the Dirichlet and Neumann problems. We will obtain some expressions  that allow us to connect both Green's functions. 
  
\begin{theorem}
Assume that operator $L[\lambda]$ is nonresonant in the spaces $X_{D}$ and $X_{N}$. Then the  following equality is satisfied
\begin{equation}\label{6}
\small
\begin{aligned}
G_{N}[\lambda](t,s)&=G_{D}[\lambda](t,s)+r_{1}[\lambda](t)\, G_{N}[\lambda](0,s)+r_{2}[\lambda](t)\, G_{N}[\lambda](1,s)\\
&=G_{D}[\lambda](t,s)-\frac{\partial }{\partial s}G_{D}[\lambda](t,0)\, G_{N}[\lambda](0,s)+\frac{\partial }{\partial s}G_{D}[\lambda](t,1)\, G_{N}[\lambda](1,s),\;\; \forall (t,s) \in I\times I.
\end{aligned}
\end{equation}
\end{theorem}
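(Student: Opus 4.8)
The plan is to mirror the argument used in Theorem \ref{th3}, reformulating the Neumann problem \eqref{e-Neumann} as a Dirichlet-type problem whose boundary data are the (\emph{a priori} unknown) endpoint values of the Neumann solution. First I would fix $\sigma \in L^1(I)$, let $u_N$ be the solution of \eqref{e-Neumann}, and observe that $u_N$ solves $L[\lambda]u = \sigma$ together with the Dirichlet-type conditions $u(0) = u_N(0)$ and $u(1) = u_N(1)$. Since $L[\lambda]$ is assumed nonresonant in $X_D$, this Dirichlet-type problem has a unique solution, which must therefore coincide with $u_N$.

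Next I would apply Lemma \ref{e-sol-hi} to the reformulated problem, taking $B_1(u) = u(0)$ and $B_2(u) = u(1)$, so that (as already recorded in the remark following Theorem \ref{th3}) the associated functions are $\omega_1 = r_1[\lambda]$ and $\omega_2 = r_2[\lambda]$, while the nonhomogeneous boundary values are $\lambda_1 = u_N(0)$ and $\lambda_2 = u_N(1)$. The lemma then yields
\begin{equation*}
u_N(t) = \int_0^1 G_D[\lambda](t,s)\,\sigma(s)\,ds + u_N(0)\,r_1[\lambda](t) + u_N(1)\,r_2[\lambda](t).
\end{equation*}

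I would then substitute the integral representations $u_N(0) = \int_0^1 G_N[\lambda](0,s)\,\sigma(s)\,ds$ and $u_N(1) = \int_0^1 G_N[\lambda](1,s)\,\sigma(s)\,ds$, collect everything under a single integral, and compare with $u_N(t) = \int_0^1 G_N[\lambda](t,s)\,\sigma(s)\,ds$. Because the resulting identity holds for every $\sigma \in L^1(I)$, the two kernels must agree, which gives the first line of \eqref{6}. The second line follows at once by inserting the identities $r_1[\lambda](t) = -\tfrac{\partial}{\partial s}G_D[\lambda](t,0)$ and $r_2[\lambda](t) = \tfrac{\partial}{\partial s}G_D[\lambda](t,1)$ established in the preliminaries.

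The step I expect to require the most care is justifying the reformulation itself. At first glance, replacing the Neumann conditions $u'(0)=u'(1)=0$ by the data $u(0)=u_N(0)$, $u(1)=u_N(1)$ appears to discard the very conditions defining the problem; the point is that those conditions are re-encoded precisely when the Neumann Green's function is substituted for the boundary values $u_N(0)$ and $u_N(1)$, and the equality of solutions rests on the nonresonance of $L[\lambda]$ in $X_D$, which guarantees uniqueness of the Dirichlet-type solution. Unlike the periodic case in Theorem \ref{th3}, no rewriting of the derivative conditions is needed here, so this direction is in fact somewhat cleaner once the reformulation is accepted.
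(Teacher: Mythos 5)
Your proposal is correct and follows essentially the same route as the paper: the paper also rewrites the Neumann problem with Dirichlet-type boundary operators (formally as $u(0)=u(0)+u'(0)$, $u(1)=u(1)+u'(1)$, which for the Neumann solution reduces to your $u(0)=u_N(0)$, $u(1)=u_N(1)$), applies Lemma \ref{e-sol-hi} with $\omega_1=r_1[\lambda]$, $\omega_2=r_2[\lambda]$, substitutes the integral representations of $u_N(0)$ and $u_N(1)$ via $G_N[\lambda]$, and concludes by the arbitrariness of $\sigma$. The only difference is cosmetic, and your explicit justification of why the reformulated Dirichlet-type problem has $u_N$ as its unique solution is a point the paper leaves implicit.
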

\begin{proof}
 Let us rewrite Neumann problem in the following way 
\begin{equation*}
L[\lambda]\, u(t)=\sigma(t),\;\;  \text{a.e.}\;\; t\in I,\;\; u(0)=u(0)+u'(0),\;\; u(1)=u(1)+u'(1).
\end{equation*}

Using Lemma \ref{e-sol-hi}, the solution to the above  problem is 
\begin{equation*}
\begin{aligned}
u_{N}(t)=&\displaystyle \int_{0}^{1} G_{N}[\lambda](t,s)\, \sigma(s)\, ds
=\displaystyle \int_{0}^{1} G_{D}[\lambda](t,s)\, \sigma(s)\, ds+ r_{1}[\lambda](t)  u_{N}(0)+r_{2}[\lambda](t) u_{N}(1)\\
=& \displaystyle \int_{0}^{1} G_{D}[\lambda](t,s)\, \sigma(s) ds+ r_{1}[\lambda](t) \displaystyle \int_{0}^{1} G_{N}[\lambda](0,s)\, \sigma(s) ds +r_{2}[\lambda](t) \displaystyle \int_{0}^{1} G_{N}[\lambda](0,s)\, \sigma(s) ds.
\end{aligned}
\end{equation*}
Therefore, since previous equalities hold for every  $\sigma\in L^{1}(I)$, we obtain \eqref{6}.
\end{proof}
\begin{corollary}
	The following inequality holds 
\begin{equation}\label{e-GN-GD}
G_{N}[\lambda](t,s)<G_{D}[\lambda](t,s)<0,\quad \forall (t,s)\in (0,1)\times (0,1),\quad \forall \lambda<\lambda_{0}^{N}. 
\end{equation}
\end{corollary}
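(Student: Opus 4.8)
The plan is to read the inequality directly off the decomposition \eqref{6}, which writes the difference $G_{N}[\lambda]-G_{D}[\lambda]$ as $r_{1}[\lambda](t)\,G_{N}[\lambda](0,s)+r_{2}[\lambda](t)\,G_{N}[\lambda](1,s)$, and then to perform a sign count on the four factors appearing there. First I would fix $\lambda<\lambda_{0}^{N}$ and assemble the three ingredients I need: the strict positivity of $r_{1}[\lambda]$ and $r_{2}[\lambda]$ on the interior of $I$, the strict negativity of the Neumann Green's function including its boundary values, and the negativity of the Dirichlet Green's function.

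For the positivity of the $r_i$, Lemma \ref{signo2} gives $r_{1}[\lambda]>0$ on $[0,1)$ and Lemma \ref{signo1} gives $r_{2}[\lambda]>0$ on $(0,1]$ whenever $\lambda<\lambda_{0}^{D}$; in particular both are strictly positive throughout $(0,1)$. For the negativity facts I would invoke the same sign results used in the proof of Theorem \ref{teorema1}, namely $G_{N}[\lambda]<0$ on $I\times I$ for $\lambda<\lambda_{0}^{N}$ and $G_{D}[\lambda]<0$ on $(0,1)\times(0,1)$ for $\lambda<\lambda_{0}^{D}$ (see \cite[Lemma 2.9]{kkkk}). The bridge between the two hypotheses is the eigenvalue ordering $\lambda_{0}^{N}\le\lambda_{0}^{D}$, a standard comparison of the first Neumann and Dirichlet eigenvalues, so that the single assumption $\lambda<\lambda_{0}^{N}$ already forces $\lambda<\lambda_{0}^{D}$ and makes all of these sign statements simultaneously valid.

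With these in hand the conclusion is immediate. For $(t,s)\in(0,1)\times(0,1)$ each of the two products $r_{1}[\lambda](t)\,G_{N}[\lambda](0,s)$ and $r_{2}[\lambda](t)\,G_{N}[\lambda](1,s)$ is a strictly positive number times a strictly negative one, hence negative, so their sum is negative and \eqref{6} yields $G_{N}[\lambda](t,s)<G_{D}[\lambda](t,s)$. Since $\lambda<\lambda_{0}^{D}$, the Dirichlet part gives $G_{D}[\lambda](t,s)<0$, and chaining the two inequalities produces \eqref{e-GN-GD}.

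I expect the only delicate point to be justifying the strict negativity of the boundary values $G_{N}[\lambda](0,s)$ and $G_{N}[\lambda](1,s)$ for $s\in(0,1)$, rather than mere non-positivity, since these are precisely the points where the Neumann Green's function is evaluated at a boundary node; this is where the negativity of $G_{N}[\lambda]$ on the whole closed square $I\times I$ (and not only in its interior) is essential, and it is the reason the hypothesis must be phrased on $I\times I$. Once that is secured and the comparison $\lambda_{0}^{N}\le\lambda_{0}^{D}$ is cited, the argument reduces to the one-line sign count above with no further computation.
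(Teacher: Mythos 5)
Your proposal is correct and follows exactly the paper's own argument: the decomposition \eqref{6}, positivity of $r_{1}[\lambda]$ and $r_{2}[\lambda]$ on $(0,1)$ from Lemmas \ref{signo2} and \ref{signo1}, negativity of $G_{N}[\lambda]$ on $I\times I$ and of $G_{D}[\lambda]$ on $(0,1)\times(0,1)$, and the eigenvalue ordering $\lambda_{0}^{N}\leq\lambda_{0}^{P}<\lambda_{0}^{D}$ to make all hypotheses compatible under $\lambda<\lambda_{0}^{N}$. Your closing remark about needing strict negativity of $G_{N}[\lambda]$ at the boundary points $(0,s)$ and $(1,s)$ is a sensible observation and is exactly why the cited result on the closed square $I\times I$ is used.
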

\begin{proof}
We know that, from Lemmas \ref{signo2} and \ref{signo1}, $r_{1}[\lambda]$ and $r_{2}[\lambda]$ are positive on $(0,1)$ for all  $\lambda<\lambda_{0}^{D}$. In addition, $\lambda_{0}^{N}\leq \lambda_{0}^{P}< \lambda_{0}^{D}$ (\cite[page 44]{A}), $G_{N}[\lambda]<0$ on $I\times I$ for all $\lambda<\lambda_{0}^{N}$ (\cite[Corollary 4.5]{kkkk}) and $G_{D}[\lambda]<0$ on $(0,1)\times(0,1)$ for all $\lambda<\lambda_{0}^{D}$ (\cite[Lemma 2.9]{kkkk}).  Then,  for all $\lambda<\lambda_{0}^{N}$, $r_{1}[\lambda]$ and $r_{2}[\lambda]$ are positive on $(0,1)$. Hence, using \eqref{6} we obtain the result.
\end{proof}
\begin{remark}
The above result can be deduced from  \cite[Corollaries 4.5, 4.8 and 4.10]{kkkk}, but in a different way than we have explained here. In such reference, the argument used is based on considering  the even extension of the solution to the interval $[0,2]$. In any case, the expression \eqref{6} relating $G_{N}[\lambda]$ and $G_{D}[\lambda]$ is different from the one obtained in that article.
\end{remark} 
For the reverse process, by writing the Dirichlet problem as a function of Neumann problem as 
\begin{equation*}
L[\lambda]\, u(t)=\sigma(t),\;\;  \text{a.e.}\;\; t\in I,\;\; u'(0)=u(0)+u'(0),\;\; u'(1)=u(1)+u'(1),
\end{equation*} 
we arrive to  the next result as a consequence of Lemma \ref{e-sol-hi}. 
 \begin{theorem}
 	Assume that operator $L[\lambda]$ is nonresonant in the spaces $X_{D}$ and $X_{N}$. Then, the following equalities is satisfied
 \begin{equation}\label{e-GD-GN}
  \small
   \begin{aligned}
 G_{D}[\lambda](t,s)&=G_{N}[\lambda](t,s)+r_{5}[\lambda](t)\, \frac{\partial }{\partial t}G_{D}[\lambda](0,s)+r_{6}[\lambda](t)\, \frac{\partial }{\partial t}G_{D}[\lambda](1,s)\\
 &=G_{N}[\lambda](t,s)+G_{N}[\lambda](t,0)\, \frac{\partial }{\partial t}G_{D}[\lambda](0,s)-G_{N}[\lambda](t,1)\, \frac{\partial }{\partial t}G_{D}[\lambda](1,s) ,\;\; \forall (t,s) \in I\times I. 
  \end{aligned}
 \end{equation}
 \end{theorem}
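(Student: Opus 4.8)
The plan is to follow exactly the scheme used to establish \eqref{6}, but with the roles of the two problems interchanged: now the Neumann boundary operators will play the role of the ``base'' conditions and the Dirichlet conditions will be encoded through them. The starting point is the rewriting of the Dirichlet problem already displayed above, namely $u'(0)=u(0)+u'(0)$ and $u'(1)=u(1)+u'(1)$; these two identities are trivially equivalent to $u(0)=0$ and $u(1)=0$, but they are now written in the form $B_1(u)=u'(0)=\lambda_1$, $B_2(u)=u'(1)=\lambda_2$ with $\lambda_1=u(0)+u'(0)$ and $\lambda_2=u(1)+u'(1)$, where $B_1,B_2$ are precisely the Neumann boundary operators.

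First I would apply Lemma \ref{e-sol-hi} to this reformulation. Since $L[\lambda]$ is nonresonant in $X_N$, its Green's function $G_N[\lambda]$ exists, and the auxiliary functions associated to the operators $B_1(u)=u'(0)$ and $B_2(u)=u'(1)$ are exactly $\omega_1=r_5[\lambda]$ and $\omega_2=r_6[\lambda]$. Hence the Dirichlet solution $u_D$ admits the representation
\[
u_D(t)=\int_0^1 G_N[\lambda](t,s)\,\sigma(s)\,ds+\bigl(u_D(0)+u_D'(0)\bigr)\,r_5[\lambda](t)+\bigl(u_D(1)+u_D'(1)\bigr)\,r_6[\lambda](t).
\]
Because $u_D$ satisfies the Dirichlet conditions $u_D(0)=u_D(1)=0$, the two coefficients collapse to $u_D'(0)$ and $u_D'(1)$. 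These derivatives are then expressed through the Dirichlet Green's function by differentiating $u_D(t)=\int_0^1 G_D[\lambda](t,s)\,\sigma(s)\,ds$ in $t$ and evaluating at the endpoints, which gives $u_D'(0)=\int_0^1 \frac{\partial}{\partial t}G_D[\lambda](0,s)\,\sigma(s)\,ds$ and the analogous identity at $t=1$.

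Substituting these two integral expressions back into the representation and recalling that the resulting identity must hold for every $\sigma\in L^1(I)$, I would equate the kernels to obtain the first line of \eqref{e-GD-GN}. The second line follows immediately from the endpoint identities established in the preliminaries, $r_5[\lambda](t)=G_N[\lambda](t,0)$ and $r_6[\lambda](t)=-G_N[\lambda](t,1)$. The only delicate point is the self-referential nature of the right-hand side of Lemma \ref{e-sol-hi}: the data $\lambda_1,\lambda_2$ depend on the very solution $u_D$ we are representing. This is harmless here precisely because the Dirichlet conditions annihilate the trace terms $u_D(0),u_D(1)$, leaving only the derivative traces, which are in turn linear functionals of $\sigma$ with known kernels. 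I would also note at the outset that nonresonance in both $X_D$ and $X_N$ is exactly what guarantees the simultaneous existence of $G_D[\lambda]$, $G_N[\lambda]$, $r_5[\lambda]$ and $r_6[\lambda]$, so that every object in the computation is well defined.
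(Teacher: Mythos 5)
Your proposal is correct and follows essentially the same route as the paper: the authors also rewrite the Dirichlet problem as $u'(0)=u(0)+u'(0)$, $u'(1)=u(1)+u'(1)$, apply Lemma \ref{e-sol-hi} with the Neumann Green's function and the auxiliary solutions $r_5[\lambda]$, $r_6[\lambda]$, and then identify kernels, the second line following from $r_{5}[\lambda](t)=G_{N}[\lambda](t,0)$ and $r_{6}[\lambda](t)=-G_{N}[\lambda](t,1)$. Your remark on the self-referential data $\lambda_1,\lambda_2$ is a correct and welcome clarification of a point the paper leaves implicit.
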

\begin{remark}
	Since for $\lambda<\lambda_{0}^{N}$ we have $G_{N}[\lambda]<0$ on $I\times I$ and $G_{D}[\lambda]<0$ on $(0,1)\times (0,1)$, we conclude from \eqref{e-GD-GN} the inequality \eqref{e-GN-GD} again.
\end{remark}
\subsection{Dirichlet and Mixed problems}
In this case we carry out an analysis of the relationship between the Green's functions of Dirichlet and Mixed problems. Following the same steps than before in the previous subsection, we get the next result.


\begin{theorem}
 Assume that $L[\lambda]$ is nonresonant  both in $X_{D}$ and $X_{M_{1}}$, then it holds that  
 \begin{equation}\label{7}
 \begin{aligned}
 G_{M_{1}}[\lambda](t,s)&=G_{D}[\lambda](t,s)+r_{1}[\lambda](t)\, G_{M_{1}}[\lambda](0,s)\\
 &=G_{D}[\lambda](t,s)-\frac{\partial }{\partial s}G_{D}[\lambda](t,0)\, G_{M_{1}}[\lambda](0,s),\;\; \forall (t,s)\in I\times I.
 \end{aligned}
 \end{equation}
 \end{theorem}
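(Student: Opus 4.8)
The plan is to follow the same method used to prove Theorem \ref{th3} and equation \eqref{6}, namely rewriting the mixed boundary conditions of problem \eqref{e-Mixed-1} as a combination of the Dirichlet conditions and then invoking Lemma \ref{e-sol-hi}. Recall that the Mixed problem 1 is defined by $u'(0)=u(1)=0$, while the Dirichlet conditions involve $u(0)$ and $u(1)$. The key observation is that the second mixed condition $u(1)=0$ coincides with the second Dirichlet condition, whereas the first mixed condition $u'(0)=0$ must be encoded by a suitable choice of the boundary data. Concretely, I would write the Mixed problem 1 in the form
\begin{equation*}
L[\lambda]\, u(t)=\sigma(t),\;\; \text{a.e.}\;\; t\in I,\;\; u(0)=u(0)+u'(0),\;\; u(1)=u(1),
\end{equation*}
so that, in the notation of Lemma \ref{e-sol-hi} with $B_1(u)=u(0)$, $B_2(u)=u(1)$, $\omega_1=r_1[\lambda]$ and $\omega_2=r_2[\lambda]$, the first boundary value is $\lambda_1=u_{M_1}(0)+u'_{M_1}(0)=u_{M_1}(0)$ (since $u'_{M_1}(0)=0$) and the second is $\lambda_2=u_{M_1}(1)=0$.

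Next I would apply Lemma \ref{e-sol-hi} to obtain
\begin{equation*}
u_{M_1}(t)=\int_0^1 G_D[\lambda](t,s)\,\sigma(s)\,ds+r_1[\lambda](t)\,u_{M_1}(0)+r_2[\lambda](t)\,u_{M_1}(1),
\end{equation*}
and then substitute the Green's function representations $u_{M_1}(0)=\int_0^1 G_{M_1}[\lambda](0,s)\,\sigma(s)\,ds$ and $u_{M_1}(1)=\int_0^1 G_{M_1}[\lambda](1,s)\,\sigma(s)\,ds$. Since the second mixed condition forces $u_{M_1}(1)=0$, the term multiplying $r_2[\lambda](t)$ vanishes, leaving exactly the single correction $r_1[\lambda](t)\,G_{M_1}[\lambda](0,s)$. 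Because these identities hold for every $\sigma\in L^1(I)$, the kernels must agree pointwise, which yields the first line of \eqref{7}. The second line then follows immediately by substituting the already-established identity $r_1[\lambda](t)=-\frac{\partial}{\partial s}G_D[\lambda](t,0)$ from the list of auxiliary functions in the Preliminaries.

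The main subtlety, rather than a genuine obstacle, lies in correctly reading off the boundary data $\lambda_1$ and $\lambda_2$ after the conditions are rewritten, and in confirming that the contribution from $u_{M_1}(1)$ genuinely drops out thanks to $u_{M_1}(1)=0$; this is what makes the mixed decomposition shorter (only one correction term) than the Neumann decomposition \eqref{6}, which retained both $r_1[\lambda]$ and $r_2[\lambda]$. I would also note at the outset that the nonresonance of $L[\lambda]$ in both $X_D$ and $X_{M_1}$ guarantees the existence and uniqueness of $G_D[\lambda]$, $G_{M_1}[\lambda]$, and the auxiliary function $r_1[\lambda]$ (the latter by Lemma \ref{autovalor2}), so that every object appearing in the computation is well defined and the representation formula of Lemma \ref{e-sol-hi} applies. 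No delicate estimates are required; the argument is purely an application of the superposition principle together with the uniqueness of the Green's function.
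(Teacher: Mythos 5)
Your proposal is correct and follows exactly the argument the paper intends (the paper omits the proof, noting it follows the same steps as the Dirichlet--Periodic and Dirichlet--Neumann cases): rewrite the Mixed~1 conditions in Dirichlet form, apply Lemma \ref{e-sol-hi} with $\lambda_1=u_{M_1}(0)$ and $\lambda_2=u_{M_1}(1)=0$, substitute $u_{M_1}(0)=\int_0^1 G_{M_1}[\lambda](0,s)\,\sigma(s)\,ds$, and conclude by the arbitrariness of $\sigma$ together with $r_1[\lambda](t)=-\frac{\partial}{\partial s}G_D[\lambda](t,0)$. No gaps.
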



As a consequence, we deduce the following result.
\begin{corollary}\label{corolario3}
The following inequality holds 
	\begin{equation}\label{e-des-GM1-GD}
	G_{M_{1}}[\lambda](t,s)<G_{D}[\lambda](t,s)<0,\;\; \forall (t,s)\in (0,1)\times(0,1),\;\; \forall  \lambda<\lambda_{0}^{M_{1}}.
	\end{equation}
\end{corollary}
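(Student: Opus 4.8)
The plan is to read off the sign information directly from the decomposition \eqref{7}, exactly as was done for the Neumann--Dirichlet comparison \eqref{e-GN-GD}. For $\lambda<\lambda_{0}^{M_{1}}$ the operator $L[\lambda]$ is nonresonant both in $X_{M_{1}}$ and (as I explain below) in $X_{D}$, so \eqref{7} applies and can be rewritten as
\begin{equation*}
G_{M_{1}}[\lambda](t,s)-G_{D}[\lambda](t,s)=r_{1}[\lambda](t)\, G_{M_{1}}[\lambda](0,s).
\end{equation*}
The task then reduces to controlling the sign of each factor on the right-hand side together with the sign of $G_{D}[\lambda]$ itself.

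First I would fix $\lambda<\lambda_{0}^{M_{1}}$ and invoke the known ordering of the first eigenvalues, $\lambda_{0}^{N}\le\lambda_{0}^{M_{1}}\le\lambda_{0}^{D}$ (the Mixed problem sits between the Neumann and Dirichlet ones; see \cite[page 44]{A}), so that in particular $\lambda<\lambda_{0}^{D}$. This has two immediate consequences: by Lemma \ref{signo2}, $r_{1}[\lambda]>0$ on $[0,1)$, and by \cite[Lemma 2.9]{kkkk}, $G_{D}[\lambda]<0$ on $(0,1)\times(0,1)$. Next I would use the constant sign of the Mixed-1 Green's function, namely $G_{M_{1}}[\lambda]<0$ on its natural domain for $\lambda<\lambda_{0}^{M_{1}}$ (the analogue for the Mixed case of \cite[Corollary 4.5]{kkkk}); evaluating at $t=0$ gives $G_{M_{1}}[\lambda](0,s)<0$ for every $s\in(0,1)$, since the Neumann-type condition $u'(0)=0$ does not force the Green's function to vanish at $t=0$.

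Combining these facts, for every $(t,s)\in(0,1)\times(0,1)$ the product $r_{1}[\lambda](t)\,G_{M_{1}}[\lambda](0,s)$ is strictly negative, which yields $G_{M_{1}}[\lambda](t,s)<G_{D}[\lambda](t,s)$; since $G_{D}[\lambda]<0$ on the same square, the full chain \eqref{e-des-GM1-GD} follows. The only genuinely delicate point is justifying the strict (not merely nonstrict) negativity $G_{M_{1}}[\lambda](0,s)<0$ at the boundary abscissa $t=0$, and confirming that the eigenvalue inequality $\lambda_{0}^{M_{1}}\le\lambda_{0}^{D}$ places every admissible $\lambda$ strictly below $\lambda_{0}^{D}$, so that $r_{1}[\lambda]$ stays positive on $[0,1)$ and $G_{D}[\lambda]$ strictly negative; once the relevant constant-sign result for $G_{M_{1}}[\lambda]$ is quoted, the remainder is an immediate sign count from \eqref{7}.
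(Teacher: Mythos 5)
Your proposal is correct and follows essentially the same route as the paper: both start from the decomposition \eqref{7}, combine the eigenvalue ordering $\lambda_{0}^{M_{1}}<\lambda_{0}^{D}$ with the constant sign of $G_{M_{1}}[\lambda]$ on $[0,1)\times[0,1)$ (which in particular gives $G_{M_{1}}[\lambda](0,s)<0$) and of $G_{D}[\lambda]$ on $(0,1)\times(0,1)$, and read off the sign of the correction term. The only cosmetic difference is that you obtain the positivity of the coefficient via Lemma \ref{signo2} applied to $r_{1}[\lambda]$, whereas the paper deduces the equivalent fact $\frac{\partial}{\partial s}G_{D}[\lambda](t,0)<0$ directly from the negativity of $G_{D}[\lambda]$; since $r_{1}[\lambda](t)=-\frac{\partial}{\partial s}G_{D}[\lambda](t,0)$, these are the same statement.
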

\begin{proof}
Inequality $\lambda_{0}^{M_{1}}<\lambda_{0}^{D}$ is provided in \cite[Remark 4.19]{kkkk}. In addition,  we have that $G_{M_{1}}[\lambda]<0$ on $[0,1)\times [0,1)$ if and only if $\lambda<\lambda_{0}^{M_{1}}$ (see \cite[Corollary 4.7]{kkkk}) and $G_{D}[\lambda]<0$ on $(0,1)\times (0,1)$ if and only if $\lambda<\lambda_{0}^{D}$, which implies that $\frac{\partial}{\partial s}G_{D}(t,0)<0$ for all $\lambda<\lambda_{0}^{D}$ and $t\in (0,1)$. 

Therefore, using \eqref{7} we deduce the  inequality.
\end{proof}
Similarly, for Mixed 2 problem we arrive  at the following results.
\begin{theorem}
If operator $L[\lambda]$ is nonresonant in $X_{D}$ and $X_{M_{2}}$, then the following equality holds 
\begin{equation}\label{e-M2-D}
\begin{aligned}
G_{M_{2}}[\lambda](t,s)&=G_{D}[\lambda](t,s)+r_{2}[\lambda](t)\, G_{M_{2}}[\lambda](1,s)\\
&=G_{D}[\lambda](t,s)+\frac{\partial}{\partial s} G_{D}[\lambda](t,1)\, G_{M_{2}}[\lambda](1,s),\quad \forall (t,s)\in I\times I,
\end{aligned}
\end{equation}
\end{theorem}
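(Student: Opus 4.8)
The plan is to reproduce the superposition argument already used for the Dirichlet--Neumann identity \eqref{6} and the Dirichlet--Mixed~1 identity \eqref{7}, now reducing the Mixed~2 problem to the Dirichlet one through Lemma \ref{e-sol-hi}. The first step is to rewrite the defining conditions of the Mixed~2 problem, $u(0)=0$ and $u'(1)=0$, in terms of the Dirichlet boundary operators $B_1(u)=u(0)$ and $B_2(u)=u(1)$. Concretely, I would express the Mixed~2 problem in the tautological form
\begin{equation*}
L[\lambda]\,u(t)=\sigma(t),\;\;\text{a.e.}\;\;t\in I,\;\; u(0)=u(0),\;\; u(1)=u(1)+u'(1),
\end{equation*}
which holds for $u_{M_2}$ precisely because $u'_{M_2}(1)=0$. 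This places us in the framework of Lemma \ref{e-sol-hi} with $B_1(u)=u(0)$, $B_2(u)=u(1)$, associated functions $\omega_1=r_1[\lambda]$, $\omega_2=r_2[\lambda]$ (the solutions of \eqref{5} and \eqref{4}), and right-hand boundary data $\lambda_1=u_{M_2}(0)$, $\lambda_2=u_{M_2}(1)+u'_{M_2}(1)$.

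Applying Lemma \ref{e-sol-hi} then gives
\begin{equation*}
u_{M_2}(t)=\int_0^1 G_D[\lambda](t,s)\,\sigma(s)\,ds+r_1[\lambda](t)\,u_{M_2}(0)+r_2[\lambda](t)\left(u_{M_2}(1)+u'_{M_2}(1)\right).
\end{equation*}
The next step is to simplify the boundary contributions using $u_{M_2}\in X_{M_2}$: since $u_{M_2}(0)=0$, the $r_1[\lambda]$ term drops out, and since $u'_{M_2}(1)=0$, the derivative correction disappears, leaving only $u_{M_2}(1)=\int_0^1 G_{M_2}[\lambda](1,s)\,\sigma(s)\,ds$. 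Substituting and using that the resulting identity must hold for every $\sigma\in L^1(I)$ yields the first equality in \eqref{e-M2-D}. The second equality then follows at once by inserting the Preliminaries identity $r_2[\lambda](t)=\frac{\partial}{\partial s}G_D[\lambda](t,1)$.

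I expect this argument to be considerably more direct than the periodic case of Theorem \ref{th3}: there the derivative boundary term had to be rewritten through the matching condition $(G6)$, $\frac{\partial}{\partial t}G_P[\lambda](0,s)=\frac{\partial}{\partial t}G_P[\lambda](1,s)$, whereas here the relevant derivative condition $u'_{M_2}(1)=0$ is itself a defining boundary condition of $X_{M_2}$ and so vanishes directly. The only point requiring genuine care is the bookkeeping of which term survives: one must confirm that it is $r_2[\lambda]$ (tied to the endpoint $t=1$) rather than $r_1[\lambda]$ that remains, reflecting that in Mixed~2 the free endpoint is $t=1$, with the roles of the two endpoints interchanged relative to the Mixed~1 identity \eqref{7}.
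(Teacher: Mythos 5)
Your proposal is correct and follows exactly the paper's method: the paper derives \eqref{e-M2-D} by the same superposition argument via Lemma \ref{e-sol-hi} used for the Neumann identity \eqref{6} and the Mixed~1 identity \eqref{7} (it only remarks ``similarly'' and omits the details). Your bookkeeping is right — $\lambda_1=u_{M_2}(0)=0$ kills the $r_1[\lambda]$ term and $\lambda_2=u_{M_2}(1)+u'_{M_2}(1)=u_{M_2}(1)=\int_0^1 G_{M_2}[\lambda](1,s)\,\sigma(s)\,ds$ leaves the $r_2[\lambda]$ term, and the second line follows from $r_2[\lambda](t)=\frac{\partial}{\partial s}G_D[\lambda](t,1)$.
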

\begin{corollary}\label{corolario21}
	The following inequality holds 
	\begin{equation}\label{e-des-GM2-GD}
	G_{M_{2}}[\lambda](t,s)<G_{D}[\lambda](t,s)<0,\;\; \forall (t,s)\in (0,1)\times(0,1),\;\;	\forall  \lambda<\lambda_{0}^{M_{2}}.
	\end{equation}
\end{corollary}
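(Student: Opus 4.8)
The plan is to reproduce the argument of Corollary~\ref{corolario3}, now reading the sign of the correction term off the decomposition \eqref{e-M2-D} instead of \eqref{7}. Fix $\lambda<\lambda_{0}^{M_{2}}$. First I would gather the sign information needed for the two Green's functions appearing in \eqref{e-M2-D}. The eigenvalue ordering $\lambda_{0}^{M_{2}}<\lambda_{0}^{D}$ (the Mixed~2 counterpart of \cite[Remark 4.19]{kkkk}) ensures that $\lambda<\lambda_{0}^{D}$ as well, so that $G_{D}[\lambda]<0$ on $(0,1)\times(0,1)$ by \cite[Lemma 2.9]{kkkk}, while $G_{M_{2}}[\lambda]<0$ on the half-open square $(0,1]\times(0,1]$ for $\lambda<\lambda_{0}^{M_{2}}$ (the Mixed~2 analogue of \cite[Corollary 4.7]{kkkk}). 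In particular, evaluating the latter at $t=1$ gives $G_{M_{2}}[\lambda](1,s)<0$ for every $s\in(0,1)$.

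Next I would pin down the sign of the coefficient $\frac{\partial}{\partial s}G_{D}[\lambda](t,1)=r_{2}[\lambda](t)$. Because $G_{D}[\lambda](t,1)=0$ while $G_{D}[\lambda](t,s)<0$ for $s\in(0,1)$, the map $s\mapsto G_{D}[\lambda](t,s)$ rises to $0$ as $s\to 1^{-}$, forcing $\frac{\partial}{\partial s}G_{D}[\lambda](t,1)>0$ for every $t\in(0,1)$; equivalently, this is precisely the statement $r_{2}[\lambda]>0$ on $(0,1]$ for $\lambda<\lambda_{0}^{D}$ supplied by Lemma~\ref{signo1}.

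Finally, I would substitute these signs into the second line of \eqref{e-M2-D}. The correction term $\frac{\partial}{\partial s}G_{D}[\lambda](t,1)\,G_{M_{2}}[\lambda](1,s)$ is then the product of a strictly positive factor and a strictly negative factor, hence strictly negative on $(0,1)\times(0,1)$; combined with $G_{D}[\lambda]<0$ there, this yields $G_{M_{2}}[\lambda](t,s)<G_{D}[\lambda](t,s)<0$, which is exactly \eqref{e-des-GM2-GD}. The one step that deserves genuine care is the claimed strict negativity of $G_{M_{2}}[\lambda](1,s)$, that is, controlling the sign of the Mixed~2 Green's function at the free endpoint $t=1$ (where only $u'(1)=0$ is imposed, so the value is not pinned to zero); once that input is secured, the conclusion reduces to a one-line sign count.
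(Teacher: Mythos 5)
Your argument is correct and is exactly the one the paper intends: it mirrors the proof of Corollary \ref{corolario3}, reading the sign of the correction term in \eqref{e-M2-D} from $r_{2}[\lambda]=\frac{\partial}{\partial s}G_{D}[\lambda](\cdot,1)>0$ (Lemma \ref{signo1}), the negativity of $G_{M_{2}}[\lambda]$ on $(0,1]\times(0,1]$ (hence at $t=1$), and the ordering $\lambda_{0}^{M_{2}}<\lambda_{0}^{D}$. The only cosmetic slip is the citation label (the paper invokes \cite[Corollary 4.6]{kkkk} for the sign of $G_{M_{2}}$, not 4.7), which does not affect the argument.
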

\begin{remark}
The above inequality between $G_{M_{2}}[\lambda]$ and $G_{D}[\lambda]$ can be deduced from \cite[Corollaries 4.7, 4.8, 4.13]{kkkk}. Moreover the expression \eqref{e-M2-D} relating $G_{M_{2}}[\lambda]$ and $G_{D}[\lambda]$ is different from the one obtained in that reference. 

However, as far as we know, there is no expression in the literature that relate $G_{M_{1}}$ and $G_{D}$ and, as a consequence, equality  \eqref{7} and  inequality \eqref{e-des-GM1-GD} are new.
\end{remark}
Analogously to previous sections, we can relate expressions of the Green's function of the Dirichlet problem  and the  ones of the Mixed problems. 
\begin{theorem}
If operator $L[\lambda]$ is nonresonant in $X_D$ and $X_{M_2}$, then 
 \begin{equation*}
 \begin{aligned}
	G_{D}[\lambda](t,s)=&G_{M_{2}}[\lambda](t,s)+r_{8}[\lambda](t)\, \frac{\partial }{\partial t}G_{D}[\lambda](1,s)\\
	=&G_{M_{2}}[\lambda](t,s)-G_{M_{2}}[\lambda](t,1)\, \frac{\partial }{\partial t}G_{D}[\lambda](1,s),\;\; t,s\in I.
\end{aligned}
\end{equation*}
\end{theorem}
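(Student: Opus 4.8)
The plan is to mirror the reverse-process argument already used for the Dirichlet/Neumann pair in the proof of \eqref{e-GD-GN}, this time expressing the Dirichlet boundary conditions through the boundary operators of the Mixed 2 problem. Recall that for $X_{M_2}$ the relevant operators are $B_1(u)=u(0)$ and $B_2(u)=u'(1)$, so that the functions $\omega_1,\omega_2$ appearing in Lemma \ref{e-sol-hi} are, by uniqueness, precisely $r_7[\lambda]$ and $r_8[\lambda]$, respectively.

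First I would rewrite the Dirichlet problem \eqref{e-Dirichlet} in the equivalent form
\begin{equation*}
L[\lambda]\,u(t)=\sigma(t),\;\;\text{a.e.}\;\;t\in I,\quad u(0)=0,\quad u'(1)=u'(1)+u(1),
\end{equation*}
observing that the second identity is equivalent to $u(1)=0$, so that the two conditions taken together are exactly the Dirichlet ones while the left-hand sides are now the Mixed 2 operators $u(0)$ and $u'(1)$. Applying Lemma \ref{e-sol-hi} with the Green's function $G_{M_2}[\lambda]$, $\lambda_1=0$ and $\lambda_2=u'(1)+u(1)$, the unique solution $u_D$ can be written as
\begin{equation*}
u_D(t)=\int_0^1 G_{M_2}[\lambda](t,s)\,\sigma(s)\,ds + r_8[\lambda](t)\,\bigl(u'_D(1)+u_D(1)\bigr).
\end{equation*}

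Next, since $u_D$ is the Dirichlet solution we have $u_D(1)=0$, so the bracket collapses to $u'_D(1)$; and differentiating $u_D(t)=\int_0^1 G_D[\lambda](t,s)\,\sigma(s)\,ds$ with respect to $t$ and evaluating at $t=1$ gives $u'_D(1)=\int_0^1 \frac{\partial}{\partial t}G_D[\lambda](1,s)\,\sigma(s)\,ds$. Substituting and merging the integrals yields
\begin{equation*}
\int_0^1 G_D[\lambda](t,s)\,\sigma(s)\,ds=\int_0^1\Bigl[G_{M_2}[\lambda](t,s)+r_8[\lambda](t)\,\tfrac{\partial}{\partial t}G_D[\lambda](1,s)\Bigr]\sigma(s)\,ds,
\end{equation*}
and because this holds for every $\sigma\in L^1(I)$ the first equality of the statement follows. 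The second equality is then immediate from the identity $r_8[\lambda](t)=-G_{M_2}[\lambda](t,1)$ recorded in the preliminaries.

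The argument is essentially routine given the template established in the previous subsections; the only points requiring genuine care are the correct identification of $\omega_1,\omega_2$ with $r_7[\lambda]$ and $r_8[\lambda]$ (so that the surviving term carries $r_8$ and not $r_7$), the observation that $u_D(1)=0$ annihilates the $u(1)$ contribution, and the justification that the lateral boundary derivative $\frac{\partial}{\partial t}G_D[\lambda](1,s)$ is well defined—guaranteed by the regularity in Definition \ref{d-Green-Function}—so that differentiation under the integral sign at the endpoint $t=1$ is legitimate.
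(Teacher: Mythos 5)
Your proposal is correct and follows exactly the template the paper uses for these ``reverse process'' decompositions (cf.\ the proofs of \eqref{e-Gp-Gd} and \eqref{6}): rewrite the Dirichlet conditions through the Mixed~2 boundary operators as $u(0)=0$, $u'(1)=u'(1)+u(1)$, apply Lemma~\ref{e-sol-hi} with $\omega_2=r_8[\lambda]$, use $u_D(1)=0$ and $u'_D(1)=\int_0^1 \frac{\partial}{\partial t}G_D[\lambda](1,s)\,\sigma(s)\,ds$, and conclude by the arbitrariness of $\sigma\in L^1(I)$ together with $r_8[\lambda](t)=-G_{M_2}[\lambda](t,1)$. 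This coincides with the argument the paper intends (and omits as ``analogous''), so there is nothing to add.
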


\begin{theorem}
	If operator $L[\lambda]$ is nonresonant in $X_D$ and $X_{M_1}$, then
\begin{equation*}
\begin{aligned}
	G_{D}[\lambda](t,s)=&G_{M_{1}}[\lambda](t,s)+r_{9}[\lambda](t)\, \frac{\partial}{\partial t} G_{D}[\lambda](0,s)\\
	=&G_{M_{1}}[\lambda](t,s)+G_{M_{1}}[\lambda](t,0)\, \frac{\partial}{\partial t} G_{D}[\lambda](0,s),\;\; t,s \in I.
	\end{aligned}
\end{equation*}	
\end{theorem}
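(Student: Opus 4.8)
The plan is to mimic the reverse-direction arguments already used for the Dirichlet--Neumann and Dirichlet--Mixed~2 pairs, now expressing the Dirichlet boundary conditions in terms of the Mixed~1 operators $B_1(u)=u'(0)$ and $B_2(u)=u(1)$. The key observation is that the Dirichlet problem \eqref{e-Dirichlet} is equivalent to
\begin{equation*}
L[\lambda]\,u(t)=\sigma(t),\;\; \text{a.e.}\;\; t\in I,\;\; u'(0)=u'(0)+u(0),\;\; u(1)=0,
\end{equation*}
since the first rewritten condition holds precisely when $u(0)=0$, while the second is the Dirichlet condition $u(1)=0$ left untouched.

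First I would apply Lemma~\ref{e-sol-hi} to this reformulation, with $B_1(u)=u'(0)$ and $B_2(u)=u(1)$, so that $\omega_1=r_9[\lambda]$ and $\omega_2=r_{10}[\lambda]$ (both well defined because $L[\lambda]$ is nonresonant in $X_{M_1}$), and with nonhomogeneous data $\lambda_1=u_D'(0)+u_D(0)$ and $\lambda_2=0$. Since $\lambda_2=0$ the $r_{10}[\lambda]$ term drops out, and one obtains
\begin{equation*}
u_D(t)=\int_0^1 G_{M_1}[\lambda](t,s)\,\sigma(s)\,ds+r_9[\lambda](t)\,\bigl(u_D'(0)+u_D(0)\bigr).
\end{equation*}
Using $u_D(0)=0$ simplifies the bracket to $u_D'(0)$.

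Next I would express $u_D'(0)$ through the Dirichlet Green's function. Differentiating the representation $u_D(t)=\int_0^1 G_D[\lambda](t,s)\,\sigma(s)\,ds$ with respect to $t$ and evaluating at $t=0$ gives $u_D'(0)=\int_0^1 \frac{\partial}{\partial t}G_D[\lambda](0,s)\,\sigma(s)\,ds$. Substituting this into the previous display and collecting integrands yields
\begin{equation*}
\int_0^1 G_D[\lambda](t,s)\,\sigma(s)\,ds=\int_0^1\Bigl[G_{M_1}[\lambda](t,s)+r_9[\lambda](t)\,\tfrac{\partial}{\partial t}G_D[\lambda](0,s)\Bigr]\sigma(s)\,ds.
\end{equation*}
Since this holds for every $\sigma\in L^1(I)$, the two integrands coincide, which is the first asserted equality; the second then follows at once from the identity $r_9[\lambda](t)=G_{M_1}[\lambda](t,0)$ recorded earlier.

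The step that needs the most care is the differentiation under the integral sign producing $u_D'(0)$: it relies on the regularity of $G_D(\cdot,s)$ guaranteed by Definition~\ref{d-Green-Function} ($C^2$ off the diagonal together with the matching of one-sided derivatives), which legitimizes differentiating the Green's representation and evaluating the resulting derivative at the endpoint $t=0$. Everything else is a routine transcription of the scheme already established in the preceding theorems.
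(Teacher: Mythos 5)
Your proposal is correct and follows exactly the scheme the paper intends for this theorem (which it states without explicit proof, as ``analogous to previous cases''): rewrite the Dirichlet conditions in terms of the Mixed~1 boundary operators $u'(0)$ and $u(1)$, apply Lemma~\ref{e-sol-hi} with $\omega_1=r_9[\lambda]$, use $u_D(0)=0$ and the representation $u_D'(0)=\int_0^1 \frac{\partial}{\partial t}G_D[\lambda](0,s)\,\sigma(s)\,ds$, and conclude by the arbitrariness of $\sigma$ together with the identity $r_9[\lambda](t)=G_{M_1}[\lambda](t,0)$. No gaps.
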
 
\begin{remark}
	Notice that from two previous results we can deduce Corollaries \ref{corolario3} and \ref{corolario21}.
\end{remark}


\subsection{Neumann and Mixed problems}
In this section, arguing in a similar manner than in previous ones, we can relate the expression of the Green's functions of the Neumann problem and the ones of the corresponding Mixed ones.
\begin{theorem}
 Assume that operator $L[\lambda]$ is nonresonant in $X_{N}$ and $X_{M_{1}}$. Then it holds that
 \begin{equation}\label{8}
 \begin{aligned}
 G_{M_{2}}[\lambda](t,s)=&G_{N}[\lambda](t,s)+r_{5}[\lambda](t)\, \frac{\partial }{\partial t}G_{M_{2}}[\lambda](0,s)\\
 =&G_{N}[\lambda](t,s)+G_{N}[\lambda](t,0)\, \frac{\partial }{\partial t}G_{M_{2}}[\lambda](0,s),\;\; \forall (t,s)\in I\times I.
 \end{aligned}
 \end{equation}	
\end{theorem}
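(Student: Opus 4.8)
The plan is to run the same superposition argument that produced \eqref{7}, but now taking the Neumann problem as the base. Concretely, I would apply Lemma \ref{e-sol-hi} with $g = G_N[\lambda]$, so that the base boundary operators are $B_1(u)=u'(0)$ and $B_2(u)=u'(1)$, with associated auxiliary functions $\omega_1 = r_5[\lambda]$ (the solution of $u'(0)=1,\ u'(1)=0$) and $\omega_2 = r_6[\lambda]$ (the solution of $u'(0)=0,\ u'(1)=1$). Nonresonance guarantees that both $G_N[\lambda]$ and $G_{M_2}[\lambda]$ are well defined, so this base data exists.

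First I would rewrite the Mixed 2 conditions $u(0)=0,\ u'(1)=0$ in terms of these Neumann operators. The condition $u'(1)=0$ is already a Neumann condition, while $u(0)=0$ must be absorbed into $B_1(u)=u'(0)$, writing the problem as
\[
L[\lambda]\,u(t)=\sigma(t),\quad u'(0)=u'(0)+u(0),\quad u'(1)=u'(1).
\]
For the Mixed 2 solution the added term $u(0)$ vanishes, so evaluating the right-hand sides at $u_{M_2}$ gives the data $\lambda_1 = u'_{M_2}(0)+u_{M_2}(0)=u'_{M_2}(0)$ and $\lambda_2 = u'_{M_2}(1)=0$.

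Applying Lemma \ref{e-sol-hi} with these values then yields
\[
u_{M_2}(t)=\int_0^1 G_N[\lambda](t,s)\,\sigma(s)\,ds + r_5[\lambda](t)\,u'_{M_2}(0),
\]
the $r_6[\lambda]$ contribution dropping out because $\lambda_2=0$. Differentiating the representation $u_{M_2}(t)=\int_0^1 G_{M_2}[\lambda](t,s)\,\sigma(s)\,ds$ under the integral sign and evaluating at $t=0$ identifies $u'_{M_2}(0)=\int_0^1 \frac{\partial}{\partial t}G_{M_2}[\lambda](0,s)\,\sigma(s)\,ds$. Substituting and invoking the arbitrariness of $\sigma\in L^1(I)$ gives the first equality in \eqref{8}, and the second equality follows at once from the preliminary identity $r_5[\lambda](t)=G_N[\lambda](t,0)$.

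The only genuinely delicate point is the bookkeeping of the boundary conditions: one must correctly see that $u'(1)=0$ is the condition shared with the Neumann problem, so its coefficient $r_6[\lambda]$ drops, while $u(0)=0$ is the one rewritten through $B_1$, which is exactly what forces $\lambda_1$ to reduce to the single quantity $u'_{M_2}(0)$ rather than a mixed boundary value. I would also note, for completeness, that differentiation under the integral is legitimate by the regularity of the Green's function away from the diagonal asserted in Definition \ref{d-Green-Function}. Every remaining step is a routine computation identical in structure to the proof of \eqref{7}.
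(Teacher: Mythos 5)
Your proof is correct and is exactly the argument the paper intends: the paper omits the proof of this theorem, stating only that one argues ``in a similar manner'' to the preceding superposition proofs via Lemma \ref{e-sol-hi}, and your application of that lemma with the Neumann base data $\omega_1=r_5[\lambda]$, $\omega_2=r_6[\lambda]$, the vanishing of the $r_6$ term from $u'_{M_2}(1)=0$, and the identification $u'_{M_2}(0)=\int_0^1 \frac{\partial}{\partial t}G_{M_2}[\lambda](0,s)\,\sigma(s)\,ds$ reproduces \eqref{8} precisely. (You also implicitly use nonresonance in $X_{M_2}$ rather than $X_{M_1}$, which is indeed what the formula requires; the hypothesis as printed in the statement appears to contain a typo.)
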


\begin{corollary}\label{corolario1}
 The following inequality holds 	
\begin{equation}\label{e-N-M2}
	G_{N}[\lambda](t,s)<G_{M_{2}}[\lambda](t,s)<0,\;\; \forall (t,s)\in (0,1]\times (0,1],\;\; \forall \lambda<\lambda_{0}^{N}.
\end{equation} 
\end{corollary}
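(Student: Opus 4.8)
The plan is to read off the sign of each factor in the decomposition \eqref{8} and combine them. From the second equality in \eqref{8} one has
\[
G_{M_2}[\lambda](t,s)-G_N[\lambda](t,s)=G_N[\lambda](t,0)\,\frac{\partial}{\partial t}G_{M_2}[\lambda](0,s),
\]
so the whole statement splits into showing that this difference is strictly positive (the left inequality) and that $G_{M_2}[\lambda]<0$ on $(0,1]\times(0,1]$ (the right inequality). First I would fix the eigenvalue ordering $\lambda_0^N\le\lambda_0^{M_2}$: by the variational characterization of the first eigenvalue, the Neumann form domain contains the Mixed~2 one, so minimizing the Rayleigh quotient over the larger set gives the smaller value. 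With this in hand the hypothesis $\lambda<\lambda_0^N$ forces both $G_N[\lambda]<0$ on all of $I\times I$ (as in \cite[Corollary 4.5]{kkkk}) and $G_{M_2}[\lambda]<0$ on $(0,1]\times(0,1]$ (the Mixed~2 analogue of the negativity facts quoted in Corollaries \ref{corolario3} and \ref{corolario21}). The latter already yields the right inequality.

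For the left inequality I must show the product above is strictly positive. The factor $G_N[\lambda](t,0)$ is strictly negative for $\lambda<\lambda_0^N$, for every $t$ (including the endpoint $t=1$), since $G_N[\lambda]<0$ on the whole of $I\times I$. The remaining factor is the $t$-derivative at the left endpoint of $\phi_s:=G_{M_2}[\lambda](\cdot,s)$, and here is where I would place the core argument: for each $s\in(0,1]$, $\phi_s$ solves $L[\lambda]\phi_s=0$ on $[0,s)$ and satisfies the Mixed~2 condition $\phi_s(0)=0$, while the negativity just established gives $\phi_s(t)<0$ for $t\in(0,s)$. Hence the right derivative satisfies $\phi_s'(0)\le 0$, and it cannot vanish: otherwise $\phi_s$ would be a solution of the homogeneous equation on $[0,s)$ with $\phi_s(0)=\phi_s'(0)=0$, forcing $\phi_s\equiv 0$ there by uniqueness of the initial value problem, which contradicts $\phi_s<0$. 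Therefore $\frac{\partial}{\partial t}G_{M_2}[\lambda](0,s)<0$, the product $G_N[\lambda](t,0)\,\frac{\partial}{\partial t}G_{M_2}[\lambda](0,s)$ is strictly positive, and we conclude $G_N[\lambda](t,s)<G_{M_2}[\lambda](t,s)$, completing \eqref{e-N-M2}.

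The main step I expect to require care is the strict negativity of $\frac{\partial}{\partial t}G_{M_2}[\lambda](0,s)$: the weak inequality $\le 0$ is immediate from $\phi_s(0)=0$ and the sign of $\phi_s$ nearby, but the strictness rests entirely on the uniqueness-of-initial-value argument above, and this is where the actual content lies. The two remaining inputs I am invoking — the eigenvalue inequality $\lambda_0^N\le\lambda_0^{M_2}$ and the negativity of $G_{M_2}[\lambda]$ on \emph{exactly} $(0,1]\times(0,1]$ for $\lambda<\lambda_0^{M_2}$ — should both follow from the Sturm--Liouville and variational machinery already used elsewhere in the paper, but they must be stated precisely in the form needed here (in particular, that the negativity region of $G_{M_2}$ includes the boundary segments $t=1$ and $s=1$, so that the claimed inequality genuinely holds on the closed-on-one-side square).
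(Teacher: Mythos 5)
Your proof is correct and follows essentially the same route as the paper: both rest on the decomposition \eqref{8}, the negativity of $G_{N}[\lambda]$ on $I\times I$ for $\lambda<\lambda_{0}^{N}$, the negativity of $G_{M_{2}}[\lambda]$ on $(0,1]\times(0,1]$ for $\lambda<\lambda_{0}^{M_{2}}$, and the strict sign of $\frac{\partial}{\partial t}G_{M_{2}}[\lambda](0,s)$. The only differences are that you supply the uniqueness-of-initial-value-problem argument for the strictness of that derivative, which the paper merely asserts from $G_{M_{2}}[\lambda](0,s)=0$, and that you derive the eigenvalue ordering $\lambda_{0}^{N}\le\lambda_{0}^{M_{2}}$ variationally rather than citing it.
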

\begin{proof}
	We know that, $G_{M_{2}}[\lambda](t,s)<0$ for all $(t,s)\in (0,1]\times (0,1]$ if and only if $\lambda<\lambda_{0}^{M_{2}}$ (\cite[Corollary 4.6]{kkkk}). Since $G_{M_{2}}[\lambda](0,s)=0$ we deduce that $\dfrac{\partial }{\partial t}G_{M_{2}}[\lambda](0,s)<0$ for such $\lambda$. In addition,  $\lambda_{0}^{N}<\lambda_{0}^{M_{1}}$ (\cite[Remark 4.19]{kkkk}). Therefore, using equality \eqref{8} we obtain the result.	
\end{proof}
Analogously, for Mixed 1 problem we have the following results. 
\begin{theorem}
Assume that $L[\lambda]$ is nonresonant in $X_{M_{1}}$ and $X_{N}$, then
\begin{equation}\label{rel-GM1-GN}
\begin{aligned}
G_{M_{1}}[\lambda](t,s)=&G_{N}[\lambda](t,s)+r_{6}[\lambda](t)\,\frac{\partial }{\partial t} G_{M_{1}}[\lambda](1,s)\\
=&G_{N}[\lambda](t,s)-G_{N}[\lambda](t,1)\,\frac{\partial }{\partial t} G_{M_{1}}[\lambda](1,s),\quad \forall (t,s)\in I\times I.
\end{aligned}
\end{equation}
\end{theorem}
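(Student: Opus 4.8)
The plan is to mirror the proofs of the preceding theorems in this subsection: recast the boundary conditions defining $X_{M_{1}}$ as a pair of Neumann-type conditions and then invoke Lemma~\ref{e-sol-hi} with $B_1(u)=u'(0)$ and $B_2(u)=u'(1)$, for which the associated functions are $\omega_1=r_5[\lambda]$ and $\omega_2=r_6[\lambda]$ and the relevant Green's function is $G_N[\lambda]$. Since $L[\lambda]$ is assumed nonresonant both in $X_N$ and in $X_{M_1}$, all the objects $G_N[\lambda]$, $G_{M_1}[\lambda]$, $r_5[\lambda]$ and $r_6[\lambda]$ are well defined and unique, so the hypotheses of Lemma~\ref{e-sol-hi} are satisfied.

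First I would rewrite the Mixed 1 conditions $u'(0)=0$, $u(1)=0$ in the form
\begin{equation*}
u'(0)=u'(0),\qquad u'(1)=u'(1)+u(1),
\end{equation*}
which is consistent for the Mixed 1 solution $u_{M_1}$ precisely because $u_{M_1}'(0)=0$ and $u_{M_1}(1)=0$. Reading these as $B_1(u)=\lambda_1$ and $B_2(u)=\lambda_2$ with $\lambda_1=u_{M_1}'(0)=0$ and $\lambda_2=u_{M_1}'(1)+u_{M_1}(1)=u_{M_1}'(1)$, Lemma~\ref{e-sol-hi} gives
\begin{equation*}
u_{M_1}(t)=\int_0^1 G_N[\lambda](t,s)\,\sigma(s)\,ds+r_6[\lambda](t)\,u_{M_1}'(1).
\end{equation*}

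Next I would replace $u_{M_1}'(1)$ by its integral representation $u_{M_1}'(1)=\int_0^1 \frac{\partial}{\partial t}G_{M_1}[\lambda](1,s)\,\sigma(s)\,ds$, obtained by differentiating $u_{M_1}(t)=\int_0^1 G_{M_1}[\lambda](t,s)\,\sigma(s)\,ds$ with respect to $t$ and evaluating at $t=1$. Substituting and grouping the integrands yields
\begin{equation*}
u_{M_1}(t)=\int_0^1\left[G_N[\lambda](t,s)+r_6[\lambda](t)\,\frac{\partial}{\partial t}G_{M_1}[\lambda](1,s)\right]\sigma(s)\,ds.
\end{equation*}
Because this identity holds for every $\sigma\in L^1(I)$, the two integral kernels must coincide, which gives the first line of \eqref{rel-GM1-GN}. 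The second line then follows immediately from the already-established identity $r_6[\lambda](t)=-G_N[\lambda](t,1)$.

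I do not expect a genuine obstacle here: the argument is a direct transcription of the scheme used for \eqref{6}, \eqref{7} and \eqref{8}. The only points requiring care are checking that the proposed rewriting of the boundary conditions is indeed consistent on $u_{M_1}$, so that the correct values $\lambda_1,\lambda_2$ are identified, and confirming that differentiating under the integral sign at $t=1$ is legitimate for $s\in(0,1)$; the latter is guaranteed by the regularity of $G_{M_1}[\lambda]$ away from the diagonal together with the structural properties listed in Definition~\ref{d-Green-Function}.
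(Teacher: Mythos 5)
Your argument is correct and is essentially the proof the paper intends: the theorem's proof is omitted as ``analogous'' to the preceding ones in this subsection, all of which rewrite the boundary conditions in the target form and apply Lemma~\ref{e-sol-hi} exactly as you do, then use the identification $r_{6}[\lambda](t)=-G_{N}[\lambda](t,1)$ for the second line. The only cosmetic point is that your first rewritten condition is more naturally stated as $u'(0)=0$ (so that $\lambda_1=0$ and the $r_{5}[\lambda]$ term drops out) rather than as the tautology $u'(0)=u'(0)$, but this does not affect the validity of the argument.
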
 
\begin{corollary}\label{corolario2}
The following equality is fulfilled 
\begin{equation}\label{e-des-GN-GM1}
	G_{N}[\lambda](t,s)<G_{M_{1}}[\lambda](t,s)<0,\;\; \forall (t,s)\in [0,1)\times [0,1),\;\; \forall \lambda<\lambda_{0}^{N}. 
\end{equation}
\end{corollary}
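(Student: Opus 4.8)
The plan is to follow the blueprint of Corollary~\ref{corolario1}, replacing the decomposition \eqref{8} by the relation \eqref{rel-GM1-GN}. Written in its second form, that identity reads
\begin{equation*}
G_{M_{1}}[\lambda](t,s)=G_{N}[\lambda](t,s)-G_{N}[\lambda](t,1)\,\frac{\partial }{\partial t} G_{M_{1}}[\lambda](1,s),\quad (t,s)\in I\times I,
\end{equation*}
so the whole argument reduces to pinning down the signs of the two factors $G_{N}[\lambda](t,1)$ and $\frac{\partial }{\partial t} G_{M_{1}}[\lambda](1,s)$ for $\lambda<\lambda_{0}^{N}$.

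First I would establish the upper bound. By \cite[Corollary 4.7]{kkkk} we have $G_{M_{1}}[\lambda]<0$ on $[0,1)\times[0,1)$ precisely when $\lambda<\lambda_{0}^{M_{1}}$; since $\lambda_{0}^{N}<\lambda_{0}^{M_{1}}$ (\cite[Remark 4.19]{kkkk}), every $\lambda<\lambda_{0}^{N}$ satisfies this, giving $G_{M_{1}}[\lambda](t,s)<0$ on $[0,1)\times[0,1)$, which is the right-hand inequality in \eqref{e-des-GN-GM1}.

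Next I would read off the sign of the derivative factor. The function $t\mapsto G_{M_{1}}[\lambda](t,s)$ obeys the $M_{1}$ boundary condition $G_{M_{1}}[\lambda](1,s)=0$, while by the previous step it is strictly negative for $t\in[0,1)$; hence it increases to $0$ as $t\to 1^{-}$, which forces $\frac{\partial }{\partial t} G_{M_{1}}[\lambda](1,s)>0$ for every $s\in[0,1)$. On the other hand, for $\lambda<\lambda_{0}^{N}$ one has $G_{N}[\lambda]<0$ on all of $I\times I$ (\cite[Corollary 4.5]{kkkk}), so in particular $G_{N}[\lambda](t,1)<0$.

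Finally I would combine these signs: the product $G_{N}[\lambda](t,1)\,\frac{\partial }{\partial t} G_{M_{1}}[\lambda](1,s)$ is negative, so subtracting it adds a positive quantity to $G_{N}[\lambda](t,s)$ in the displayed identity, yielding $G_{N}[\lambda](t,s)<G_{M_{1}}[\lambda](t,s)$ and hence \eqref{e-des-GN-GM1}. I do not expect a genuine obstacle here, since every ingredient is already at hand; the only point demanding care is the deduction $\frac{\partial }{\partial t} G_{M_{1}}[\lambda](1,s)>0$, where one must invoke both the vanishing of $G_{M_1}[\lambda]$ at $t=1$ and its strict negativity just inside the interval, rather than merely the non-positivity of the Green's function.
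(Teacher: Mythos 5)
Your proposal is correct and follows essentially the same route the paper intends: the paper omits this proof as ``analogous'' to that of Corollary \ref{corolario1}, and your argument is exactly that analogue, using identity \eqref{rel-GM1-GN} together with $\lambda_{0}^{N}<\lambda_{0}^{M_{1}}$, the negativity of $G_{M_{1}}[\lambda]$ on $[0,1)\times[0,1)$, and the sign of $\frac{\partial}{\partial t}G_{M_{1}}[\lambda](1,s)$ deduced from the vanishing of $G_{M_{1}}[\lambda](\cdot,s)$ at $t=1$. The one point you flag --- upgrading $\frac{\partial}{\partial t}G_{M_{1}}[\lambda](1,s)\ge 0$ to a strict inequality --- is treated at the same level of detail as in the paper's proof of Corollary \ref{corolario1}, so no discrepancy arises.
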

\begin{remark}
Inequality \eqref{e-des-GN-GM1} can be deduced from \cite[Corollaries 4.5, 4.8, 4.13]{kkkk}. Identities \eqref{8} and \eqref{rel-GM1-GN} together with inequality \eqref{e-N-M2}  are new.
\end{remark}
By the reciprocal process, we can obtain additional relations between  the Green's function of the Neumann problem and the ones of the Mixed problems as follows. 
\begin{theorem}\label{th1}
If operator $L[\lambda]$ is nonresonant in $X_{N}$ and $X_{M_{2}}$, then 
\begin{equation*}
\begin{aligned}
G_{N}[\lambda](t,s)&=G_{M_{2}}[\lambda](t,s)+r_{7}[\lambda](t)\, G_{N}[\lambda](0,s)\\
&=G_{M_{2}}[\lambda](t,s)-\frac{\partial }{\partial s}G_{M_{2}}[\lambda](t,0)\, G_{N}[\lambda](0,s),\;\; t,s\in I.
\end{aligned}
\end{equation*}
\end{theorem}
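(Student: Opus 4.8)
The plan is to mimic the proof of Theorem~\ref{th3} and of the identity \eqref{6}: rewrite the Neumann boundary conditions as a combination of the Mixed~2 conditions, apply Lemma~\ref{e-sol-hi} with $G_{M_2}[\lambda]$ as the base Green's function, and then read off the Green's function identity from the fact that the resulting representation must hold for every $\sigma\in L^1(I)$.

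First I would observe that the two problems share the condition at the right endpoint: both $X_N$ and $X_{M_2}$ impose $u'(1)=0$. Hence, taking the Mixed~2 boundary operators $B_1(u)=u(0)$ and $B_2(u)=u'(1)$, I would rewrite the Neumann problem \eqref{e-Neumann} as
\begin{equation*}
L[\lambda]\,u(t)=\sigma(t),\;\;\text{a.e.}\;\;t\in I,\;\; u(0)=u(0)+u'(0),\;\; u'(1)=0.
\end{equation*}
The first identity is equivalent to $u'(0)=0$ and the second is exactly $u'(1)=0$, so together they are equivalent to the Neumann conditions. With the notation of Lemma~\ref{e-sol-hi}, the associated functions are $\omega_1=r_7[\lambda]$ (the solution of $L[\lambda]u=0$, $u(0)=1$, $u'(1)=0$) and $\omega_2=r_8[\lambda]$, and the right-hand data are $\lambda_1=u(0)+u'(0)$ and $\lambda_2=0$.

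Next I would apply Lemma~\ref{e-sol-hi} to the unique Neumann solution $u_N$. Since $u_N'(0)=0$, evaluating the data on $u_N$ gives $\lambda_1=u_N(0)$ and $\lambda_2=0$, so the $r_8[\lambda]$ term drops out and
\begin{equation*}
u_N(t)=\int_0^1 G_{M_2}[\lambda](t,s)\,\sigma(s)\,ds+u_N(0)\,r_7[\lambda](t).
\end{equation*}
Writing $u_N(0)=\int_0^1 G_N[\lambda](0,s)\,\sigma(s)\,ds$ and collecting the integrals, the equality of $\int_0^1 G_N[\lambda](t,s)\,\sigma(s)\,ds$ with the right-hand side for every $\sigma\in L^1(I)$ yields $G_N[\lambda](t,s)=G_{M_2}[\lambda](t,s)+r_7[\lambda](t)\,G_N[\lambda](0,s)$. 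The second displayed form then follows from the identity $r_7[\lambda](t)=-\frac{\partial}{\partial s}G_{M_2}[\lambda](t,0)$ recorded in the preliminaries.

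The only point requiring care---rather than a genuine obstacle---is the self-referential nature of the rewritten condition $u(0)=u(0)+u'(0)$: the ``boundary datum'' $\lambda_1$ depends on the solution itself. This is resolved exactly as in the earlier theorems by invoking the uniqueness of $u_N$ (guaranteed by nonresonance in $X_N$) and substituting the constant value $\lambda_1=u_N(0)$ into the representation formula of Lemma~\ref{e-sol-hi}; the matching of $\omega_1$ with $r_7[\lambda]$, together with the recognition that the shared condition $u'(1)=0$ forces $\lambda_2=0$, is what makes a single $r$-function appear in the final expression.
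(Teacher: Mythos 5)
Your proposal is correct and follows exactly the method the paper uses for this family of results (the paper omits the explicit proof of this particular theorem, but its stated ``reciprocal process'' is precisely your argument): rewrite the Neumann conditions over the Mixed~2 boundary operators, apply Lemma~\ref{e-sol-hi} with $\omega_1=r_7[\lambda]$, note that the shared condition $u'(1)=0$ kills the $r_8[\lambda]$ term, and conclude by the arbitrariness of $\sigma$ together with $r_7[\lambda](t)=-\frac{\partial}{\partial s}G_{M_2}[\lambda](t,0)$.
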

\begin{theorem}\label{th2}
If operator $L[\lambda]$ is nonresonant in $X_{N}$ and $X_{M_{1}}$, then	
\begin{equation*}
\begin{aligned}
G_{N}[\lambda](t,s)&=G_{M_{1}}[\lambda](t,s)+r_{10}[\lambda](t)\, G_{N}[\lambda](1,s)\\
&=G_{M_{1}}[\lambda](t,s)-\frac{\partial }{\partial s}G_{M_{1}}[\lambda](t,1)\, G_{N}[\lambda](1,s),\;\; t,s \in I.
\end{aligned}
\end{equation*}
\end{theorem}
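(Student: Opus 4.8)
The plan is to follow the same template used throughout this section: rewrite the Neumann boundary conditions so that they take the shape of the Mixed 1 conditions, and then apply Lemma \ref{e-sol-hi} with $G_{M_1}[\lambda]$ playing the role of the underlying Green's function. Concretely, I would take $B_1(u)=u'(0)$ and $B_2(u)=u(1)$ as the boundary operators of the Mixed 1 problem \eqref{e-Mixed-1}; with this choice, the two functions $\omega_1,\omega_2$ provided by Lemma \ref{e-sol-hi} are exactly $r_9[\lambda]$ and $r_{10}[\lambda]$, since these are by definition the solutions of $L[\lambda]u=0$ satisfying $(u'(0),u(1))=(1,0)$ and $(u'(0),u(1))=(0,1)$ respectively.

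First I would rewrite the Neumann problem \eqref{e-Neumann} in the equivalent form
\[
L[\lambda]\,u(t)=\sigma(t),\;\;\text{a.e.}\;\;t\in I,\quad u'(0)=0,\quad u(1)=u(1)+u'(1),
\]
observing that the last identity is merely a rewriting of $u'(1)=0$, so the problem is genuinely equivalent to the Neumann one and thus, by nonresonance in $X_N$, admits $u_N$ as its unique solution. Applying Lemma \ref{e-sol-hi} with the data above then yields
\[
u_N(t)=\int_0^1 G_{M_1}[\lambda](t,s)\,\sigma(s)\,ds+r_9[\lambda](t)\,u_N'(0)+r_{10}[\lambda](t)\,\bigl(u_N(1)+u_N'(1)\bigr).
\]

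The key simplification, which is what makes the right term survive, is that $u_N\in X_N$ forces $u_N'(0)=u_N'(1)=0$: the $r_9[\lambda]$ contribution vanishes identically, and the coefficient of $r_{10}[\lambda]$ collapses to $u_N(1)=\int_0^1 G_N[\lambda](1,s)\,\sigma(s)\,ds$. Substituting and collecting the integrand gives
\[
u_N(t)=\int_0^1\left[G_{M_1}[\lambda](t,s)+r_{10}[\lambda](t)\,G_N[\lambda](1,s)\right]\sigma(s)\,ds.
\]
Since $u_N(t)=\int_0^1 G_N[\lambda](t,s)\,\sigma(s)\,ds$ and the equality holds for every $\sigma\in L^1(I)$, I would conclude the first equality of the statement. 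The second equality is then immediate from the representation $r_{10}[\lambda](t)=-\tfrac{\partial}{\partial s}G_{M_1}[\lambda](t,1)$ established earlier in the Preliminaries.

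I do not expect a genuine obstacle here: the proof is structurally identical to Theorems \ref{th1} and \ref{th3}. The only points requiring mild care are verifying that the rewritten second boundary condition is exactly equivalent to $u'(1)=0$ (so that $u_N$ is the unique solution of the rewritten problem), and correctly matching the boundary operators so that it is $r_{10}[\lambda]$—and not $r_9[\lambda]$—that survives. Once the vanishing $u_N'(0)=u_N'(1)=0$ is used, everything else is the routine arbitrariness-of-$\sigma$ argument.
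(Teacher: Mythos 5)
Your proposal is correct and follows exactly the template the paper uses for the analogous results (e.g.\ the proofs of Theorem \ref{th3} and of identity \eqref{6}): rewrite the Neumann conditions in the form $B_1(u)=u'(0)=0$, $B_2(u)=u(1)=u(1)+u'(1)$, apply Lemma \ref{e-sol-hi} with $\omega_1=r_9[\lambda]$, $\omega_2=r_{10}[\lambda]$, use $u_N'(0)=u_N'(1)=0$ to reduce the boundary data to $u_N(1)=\int_0^1 G_N[\lambda](1,s)\,\sigma(s)\,ds$, and invoke the arbitrariness of $\sigma$; the second equality then follows from $r_{10}[\lambda](t)=-\tfrac{\partial}{\partial s}G_{M_1}[\lambda](t,1)$. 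The paper omits this proof precisely because it is "the reciprocal process", and your argument is the intended one.
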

\begin{remark}
	Notice that Corollaries \ref{corolario1} and \ref{corolario2}  can be deduced from Theorem \ref{th1} and \ref{th2} (respectively).
\end{remark}
\subsection{Periodic and Neumann problems}
Concerning the Neumann and Periodic problems and  arguing as before, we arrive to the next theorem. 
\begin{theorem}
If operator $L[\lambda]$ is nonresonant both in $X_{N}$ and $X_{P}$, the following equality is fulfilled
\begin{equation}\label{gp-gd}
\begin{aligned}
G_{P}[\lambda](t,s)=&G_{N}[\lambda](t,s)+\left(r_{5}[\lambda](t)+r_{6}[\lambda](t)\right)\,  \frac{\partial}{\partial t} G_{P}[\lambda](0,s)\\
=&G_{N}[\lambda](t,s)+\left(G_{N}[\lambda](t,0)-G_{N}[\lambda](t,1)\right)\,  \frac{\partial}{\partial t} G_{P}[\lambda](0,s),\;\; \forall (t,s)\in  I\times I.
\end{aligned}
\end{equation}
\end{theorem}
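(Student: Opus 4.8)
The plan is to follow verbatim the strategy of the proof of Theorem~\ref{th3}, only now expressing the periodic conditions through the Neumann boundary functionals $B_1(u)=u'(0)$ and $B_2(u)=u'(1)$. The essential step is to rewrite the periodic conditions $u(0)=u(1)$ and $u'(0)=u'(1)$ so that each left-hand side is exactly one of these Neumann functionals, namely in the form
\begin{equation*}
L[\lambda]\,u(t)=\sigma(t),\;\; \text{a.e.}\;\; t\in I,\;\; u'(0)=u'(1),\;\; u'(1)=u'(1)+u(0)-u(1).
\end{equation*}
For the periodic solution $u_P$ both rewritten conditions hold trivially, since $u_P'(0)=u_P'(1)$ and $u_P(0)=u_P(1)$.

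Next I would apply Lemma~\ref{e-sol-hi} to this reformulation, with $B_1(u)=u'(0)$ and $B_2(u)=u'(1)$. In the notation of that lemma we then have $g=G_N[\lambda]$, $\omega_1=r_5[\lambda]$ and $\omega_2=r_6[\lambda]$, since $r_5[\lambda]$ and $r_6[\lambda]$ are precisely the unique solutions of the homogeneous equation satisfying $u'(0)=1,\,u'(1)=0$ and $u'(0)=0,\,u'(1)=1$, respectively. This yields
\begin{equation*}
u_P(t)=\int_0^1 G_N[\lambda](t,s)\,\sigma(s)\,ds + r_5[\lambda](t)\,u_P'(1) + r_6[\lambda](t)\,\bigl(u_P'(1)+u_P(0)-u_P(1)\bigr).
\end{equation*}

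I would then substitute $u_P'(1)=\int_0^1 \frac{\partial}{\partial t}G_P[\lambda](1,s)\,\sigma(s)\,ds$, $u_P(0)=\int_0^1 G_P[\lambda](0,s)\,\sigma(s)\,ds$ and $u_P(1)=\int_0^1 G_P[\lambda](1,s)\,\sigma(s)\,ds$. Here two properties of the periodic Green's function come into play: its periodicity in the first variable gives $G_P[\lambda](0,s)=G_P[\lambda](1,s)$, so the term $u_P(0)-u_P(1)$ vanishes, and the analogue of condition $(G6)$ gives $\frac{\partial}{\partial t}G_P[\lambda](0,s)=\frac{\partial}{\partial t}G_P[\lambda](1,s)$. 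Collecting terms leaves $u_P(t)=\int_0^1\bigl[G_N[\lambda](t,s)+(r_5[\lambda](t)+r_6[\lambda](t))\frac{\partial}{\partial t}G_P[\lambda](0,s)\bigr]\sigma(s)\,ds$, and since this holds for every $\sigma\in L^1(I)$ we obtain the first equality of \eqref{gp-gd}. The second equality is immediate from the identities $r_5[\lambda](t)=G_N[\lambda](t,0)$ and $r_6[\lambda](t)=-G_N[\lambda](t,1)$ recorded in the Preliminaries.

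The computation is entirely routine once the reformulation is chosen; the only point requiring care is selecting the correct rewriting of the periodic conditions, so that the cross terms collapse via the periodicity of $G_P[\lambda]$ and condition $(G6)$, leaving a single coefficient $\frac{\partial}{\partial t}G_P[\lambda](0,s)$ rather than two separate boundary contributions.
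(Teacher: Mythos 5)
Your proposal is correct and follows exactly the route the paper intends: the paper omits the proof with the remark ``arguing as before,'' meaning the superposition argument of Theorem \ref{th3} via Lemma \ref{e-sol-hi}, and your rewriting $u'(0)=u'(1)$, $u'(1)=u'(1)+u(0)-u(1)$ with $\omega_1=r_5[\lambda]$, $\omega_2=r_6[\lambda]$, followed by the cancellation of $u_P(0)-u_P(1)$ and the identification $\frac{\partial}{\partial t}G_P[\lambda](1,s)=\frac{\partial}{\partial t}G_P[\lambda](0,s)$, is precisely that argument. The final identification $r_5[\lambda](t)+r_6[\lambda](t)=G_N[\lambda](t,0)-G_N[\lambda](t,1)$ is also handled correctly.
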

\begin{remark}
From \eqref{gp-gd} and due to the symmetry of $G_{P}[\lambda]$ and $G_{N}[\lambda]$, we deduce that
\begin{equation*}
\small
\left(G_{N}[\lambda](t,0)-G_{N}[\lambda](t,1) \right)\, \frac{\partial}{\partial t} G_{P}[\lambda](0,s)=\left(G_{N}[\lambda](s,0)-G_{N}[\lambda](s,1) \right)\, \frac{\partial}{\partial t} G_{P}[\lambda](0,t),\;\; \forall (t,s)\in I\times I.
\end{equation*}
If $\frac{\partial}{\partial t} G_{P}[\lambda](0,t)\neq 0$ and  $\frac{\partial}{\partial t} G_{P}[\lambda](0,s)\neq 0$, then 
$$\dfrac{G_{N}[\lambda](t,0)-G_{N}[\lambda](t,1)}{\frac{\partial }{\partial t}G_{P}[\lambda](0,t)}=\dfrac{G_{N}[\lambda](s,0)-G_{N}[\lambda](s,1)}{\frac{\partial }{\partial t}G_{P}[\lambda](0,s)}=c_{1}\in \mathbb{R}.$$
We know that $\frac{\partial}{\partial t} G_{P}[\lambda](t,s)=\frac{\partial}{\partial s} G_{P}[\lambda](s,t)$ and $\frac{\partial}{\partial s} G_{P}[\lambda](t,s)=\frac{\partial}{\partial t} G_{P}[\lambda](s,t)$. Then 
\begin{equation*}
G_{N}(t,0)-G_{N}(t,1)=c_{1}\, \frac{\partial }{\partial t} G_{P}[\lambda](0,t)=c_{1}\, \frac{\partial }{\partial s} G_{P}[\lambda](t,0).
\end{equation*}
\end{remark}


With the reverse process we arrive at the following result.
\begin{theorem}
Assume that $L[\lambda]$ is nonresonant in $X_{N}$ and $X_{P}$, then
\begin{equation*}
\begin{aligned}
G_{N}[\lambda](t,s)&=G_{P}[\lambda](t,s)+r_{3}[\lambda](t)\, (G_{N}[\lambda](0,s)-G_{N}[\lambda](1,s))\\
&=G_{P}[\lambda](t,s)-\frac{\partial }{\partial s}G_{P}[\lambda](t,0)\, (G_{N}[\lambda](0,s)-G_{N}[\lambda](1,s)),\;\; \forall (t,s) \in I\times I.
\end{aligned} 
\end{equation*}
\end{theorem}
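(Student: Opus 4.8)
The plan is to mirror the ``reverse'' arguments used earlier (for instance the one producing \eqref{e-GD-GP}), now expressing the Neumann problem \eqref{e-Neumann} in terms of the periodic boundary operators $B_1(u)=u(0)-u(1)$ and $B_2(u)=u'(0)-u'(1)$, whose associated functions in the sense of Lemma \ref{e-sol-hi} are precisely $\omega_1=r_3[\lambda]$ and $\omega_2=r_4[\lambda]$. First I would rewrite the Neumann conditions $u'(0)=u'(1)=0$ as
\[
u(0)-u(1)=u(0)-u(1)+u'(1),\qquad u'(0)-u'(1)=-u'(1),
\]
and check that this system is equivalent to $u'(0)=u'(1)=0$: the first equation forces $u'(1)=0$ and, given that, the second forces $u'(0)=0$. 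Hence the periodic-type problem with these right-hand sides is nonresonant (by the hypothesis on $X_P$) and admits $u_N$ as its unique solution.

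Next I would apply Lemma \ref{e-sol-hi} to this rewriting. Reading off the data $\lambda_1=(u_N(0)-u_N(1))+u_N'(1)$ and $\lambda_2=-u_N'(1)$, the representation formula yields
\[
u_N(t)=\int_0^1 G_P[\lambda](t,s)\,\sigma(s)\,ds+\bigl(u_N(0)-u_N(1)+u_N'(1)\bigr)\,r_3[\lambda](t)-u_N'(1)\,r_4[\lambda](t).
\]
Since the Neumann solution satisfies $u_N'(1)=0$, the $r_4[\lambda]$ term drops out and the surviving coefficient collapses to $u_N(0)-u_N(1)$, leaving $u_N(t)=\int_0^1 G_P[\lambda](t,s)\,\sigma(s)\,ds+r_3[\lambda](t)\,(u_N(0)-u_N(1))$.

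Finally, I would substitute the Green's function representations $u_N(0)=\int_0^1 G_N[\lambda](0,s)\,\sigma(s)\,ds$ and $u_N(1)=\int_0^1 G_N[\lambda](1,s)\,\sigma(s)\,ds$, so that
\[
u_N(t)=\int_0^1\Bigl[G_P[\lambda](t,s)+r_3[\lambda](t)\bigl(G_N[\lambda](0,s)-G_N[\lambda](1,s)\bigr)\Bigr]\,\sigma(s)\,ds.
\]
As this holds for every $\sigma\in L^1(I)$, the kernels must coincide, which gives the first claimed equality; the second then follows immediately from the identity $r_3[\lambda](t)=-\frac{\partial}{\partial s}G_P[\lambda](t,0)$ recorded in the Preliminaries. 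The only step demanding care is the initial rewriting of the boundary conditions: it must be genuinely equivalent to the Neumann conditions so that $u_N$ is the \emph{unique} solution of the periodic-type problem and the representation formula of Lemma \ref{e-sol-hi} is legitimately applicable; once that is secured, the remainder is routine kernel identification.
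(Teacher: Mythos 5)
Your proposal is correct and follows the same strategy the paper uses for all of these ``reverse process'' results: rewrite the Neumann conditions in terms of the periodic boundary functionals, apply Lemma \ref{e-sol-hi} with $\omega_1=r_3[\lambda]$, $\omega_2=r_4[\lambda]$, note that the $r_4[\lambda]$ coefficient vanishes because $u_N'(0)-u_N'(1)=0$, and identify kernels since $\sigma\in L^1(I)$ is arbitrary. The paper omits this proof as analogous to the earlier ones, and your argument (including the final substitution $r_3[\lambda](t)=-\frac{\partial}{\partial s}G_P[\lambda](t,0)$) is exactly the intended one.
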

\subsection{Periodic and Mixed problems}	
The same arguments of the previous subsections are applicable to the Periodic and Mixed 1 problems. We omit the proof, which is analogous to those of previous cases.
\begin{theorem}
Assume that $L[\lambda]$ is nonresonant in $X_{P}$ and $X_{M_{1}}$. Then it holds that
\begin{equation*}
\begin{aligned}
	G_{M_{1}}[\lambda](t,s)=&G_{P}[\lambda](t,s)+r_{3}[\lambda](t)\, G_{M_{1}}[\lambda](0,s)-r_{4}[\lambda](t)\, \frac{\partial }{\partial t}G_{M_{1}}[\lambda](1,s)\\
	=&G_{P}[\lambda](t,s)-\frac{\partial }{\partial s}G_{P}[\lambda](t,0)\, G_{M_{1}}[\lambda](0,s)-G_{P}[\lambda](t,0)\, \frac{\partial }{\partial t}G_{M_{1}}[\lambda](1,s),\;\; \forall (t,s)\in I\times I.
\end{aligned}
\end{equation*}
\end{theorem}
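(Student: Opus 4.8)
The plan is to follow the same template used in the proof of Theorem~\ref{th3} and the subsequent decomposition results, now taking the periodic problem \eqref{e-periodic} as the base problem in Lemma~\ref{e-sol-hi}. Recall that for the periodic problem the natural boundary operators are $B_1(u)=u(0)-u(1)$ and $B_2(u)=u'(0)-u'(1)$, and that with this choice the functions $\omega_1$ and $\omega_2$ of Lemma~\ref{e-sol-hi} are precisely $r_3[\lambda]$ and $r_4[\lambda]$, the unique solutions of \eqref{39} and \eqref{19}, respectively.

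First I would rewrite the Mixed~1 boundary conditions $u'(0)=0$, $u(1)=0$ in terms of these periodic operators. The key observation is that the condition $u(0)-u(1)=u(0)$ is equivalent to $u(1)=0$, while $u'(0)-u'(1)=-u'(1)$ is equivalent to $u'(0)=0$. Hence the Mixed~1 problem \eqref{e-Mixed-1} can be expressed as
\begin{equation*}
L[\lambda]\,u(t)=\sigma(t),\;\;\text{a.e.}\;\; t\in I,\;\; u(0)-u(1)=u(0),\;\; u'(0)-u'(1)=-u'(1),
\end{equation*}
where the right-hand sides are now functionals of the (unknown) solution.

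Applying Lemma~\ref{e-sol-hi} with $\lambda_1=u_{M_1}(0)$ and $\lambda_2=-u'_{M_1}(1)$ then yields
\begin{equation*}
u_{M_1}(t)=\int_0^1 G_P[\lambda](t,s)\,\sigma(s)\,ds+u_{M_1}(0)\,r_3[\lambda](t)-u'_{M_1}(1)\,r_4[\lambda](t).
\end{equation*}
Next I would replace $u_{M_1}(0)$ and $u'_{M_1}(1)$ by their integral representations $\int_0^1 G_{M_1}[\lambda](0,s)\,\sigma(s)\,ds$ and $\int_0^1 \frac{\partial}{\partial t}G_{M_1}[\lambda](1,s)\,\sigma(s)\,ds$, collect everything under a single integral sign, and invoke the arbitrariness of $\sigma\in L^1(I)$ to obtain the first equality of the statement. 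The second equality follows immediately by substituting the identities $r_3[\lambda](t)=-\frac{\partial}{\partial s}G_P[\lambda](t,0)$ and $r_4[\lambda](t)=G_P[\lambda](t,0)$ established in the preliminaries.

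The only genuinely delicate point is the bookkeeping in the first step: one must choose the correct functionals on the right-hand sides so that the two periodic-type conditions, taken together, are exactly equivalent to the two Mixed~1 conditions, and then track the resulting signs carefully (in particular the minus sign multiplying $r_4[\lambda]$, which originates from $\lambda_2=-u'_{M_1}(1)$). Once this translation of the boundary data is fixed, the remainder is the same routine computation carried out in the earlier decomposition theorems.
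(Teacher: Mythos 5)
Your proposal is correct and is exactly the argument the paper intends: the paper omits this proof as "analogous to those of previous cases," and your rewriting of the Mixed~1 conditions as $u(0)-u(1)=u(0)$, $u'(0)-u'(1)=-u'(1)$, followed by Lemma~\ref{e-sol-hi} with $\omega_1=r_3[\lambda]$, $\omega_2=r_4[\lambda]$ and the integral representations of $u_{M_1}(0)$ and $u'_{M_1}(1)$, is precisely the template of Theorem~\ref{th3}. The sign bookkeeping (in particular $\lambda_2=-u'_{M_1}(1)$ producing the minus in front of $r_4[\lambda]$) is handled correctly.
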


Next example shows that, in general, the Green's functions of Periodic and Mixed 1 problems are not comparable.
\begin{example}\label{11}
	We consider the differential equation $u''(t)-m^{2}\, u(t)=0$, $t\in I$ and $m\in (0,\infty)$. In this case, $a(t)=-m^{2}$, $t\in I$, $\lambda=0$ and $m\in(0,\infty)$.
	
	\begin{figure}
	\subfloat{
		\includegraphics[width=8cm]{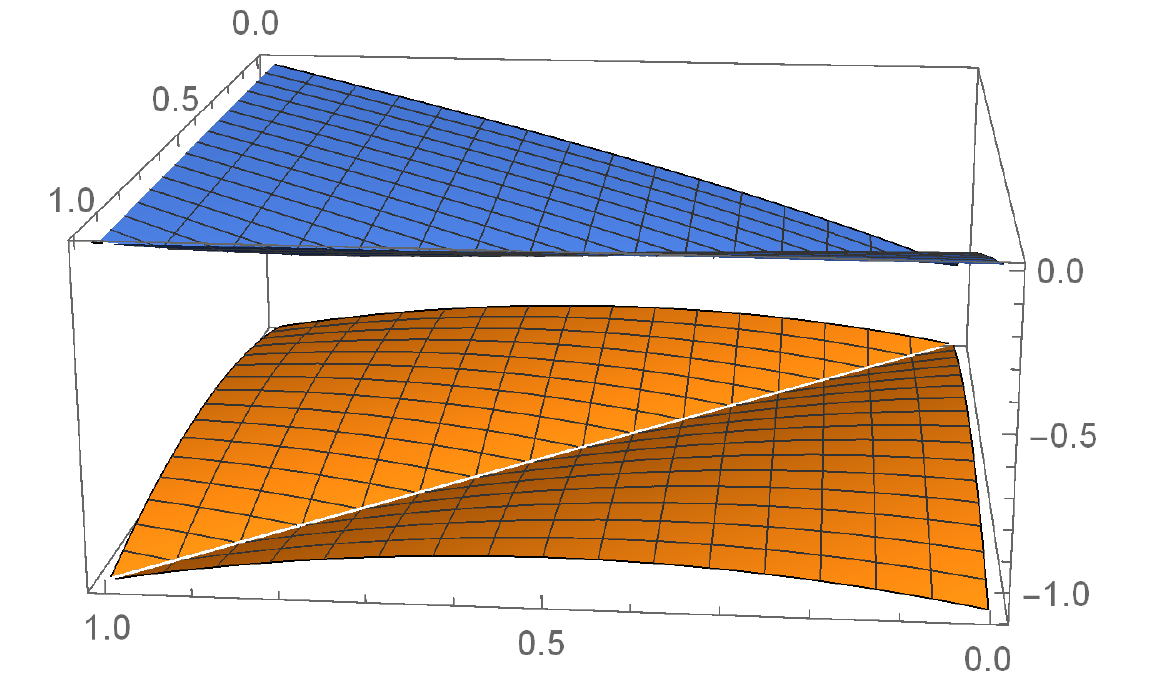}}	
\subfloat{	\includegraphics[width=7.5cm]{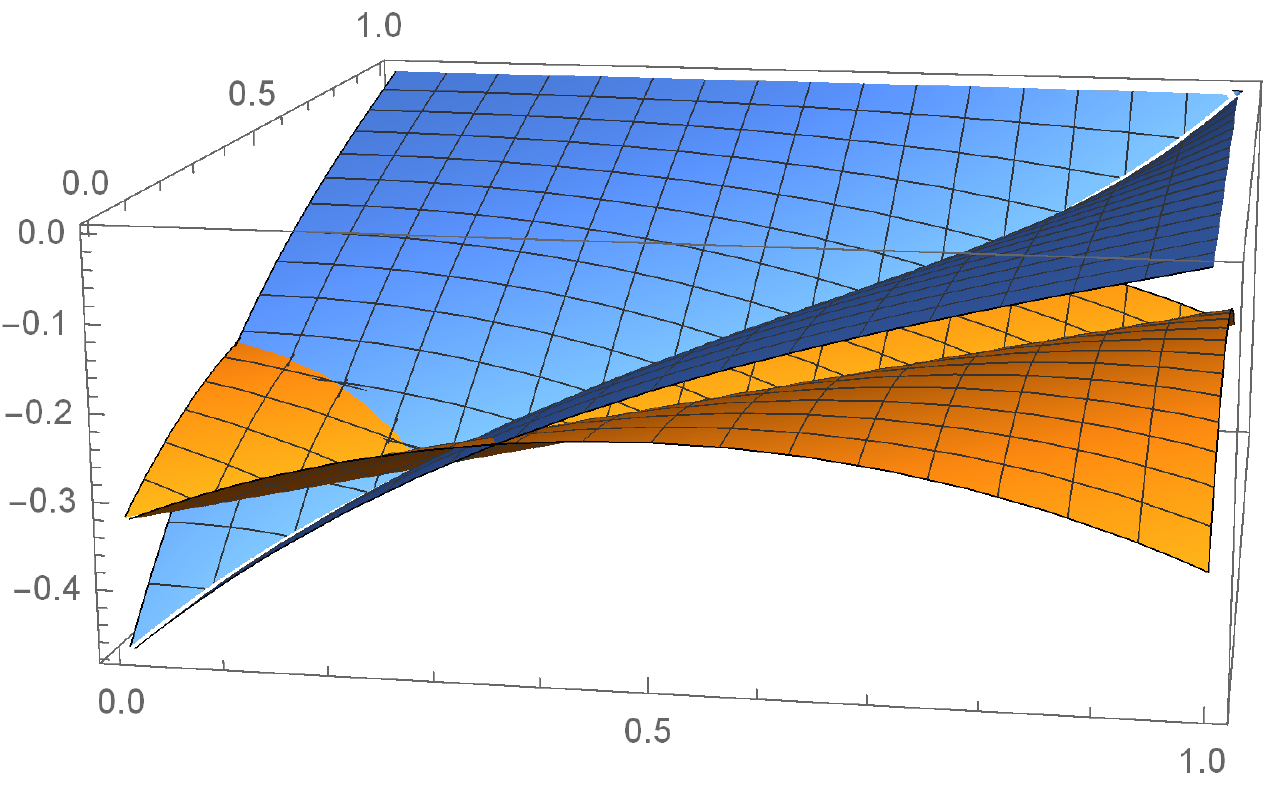}}
		\caption{The blue graph corresponds to function $G_{M_{1}}$ and the orange graph represents the  function  $G_{P}$ on $I\times I$. The figure on the left is the case $m=1$ and the figure on the right is the case $m=2$.}
		\label{21}
	\end{figure}
 
 Green's functions $G_{P}$ and $G_{M_{1}}$ are comparable for small values of $m$. Figure \ref{21} represents the Green's functions $G_{P}$ and $G_{M_{1}}$ for $m=1$ (in which case $G_{P}<G_{M_{1}}$) and for $m=2$ (which are not comparable).

\begin{figure}
\subfloat{\includegraphics[width=9cm]{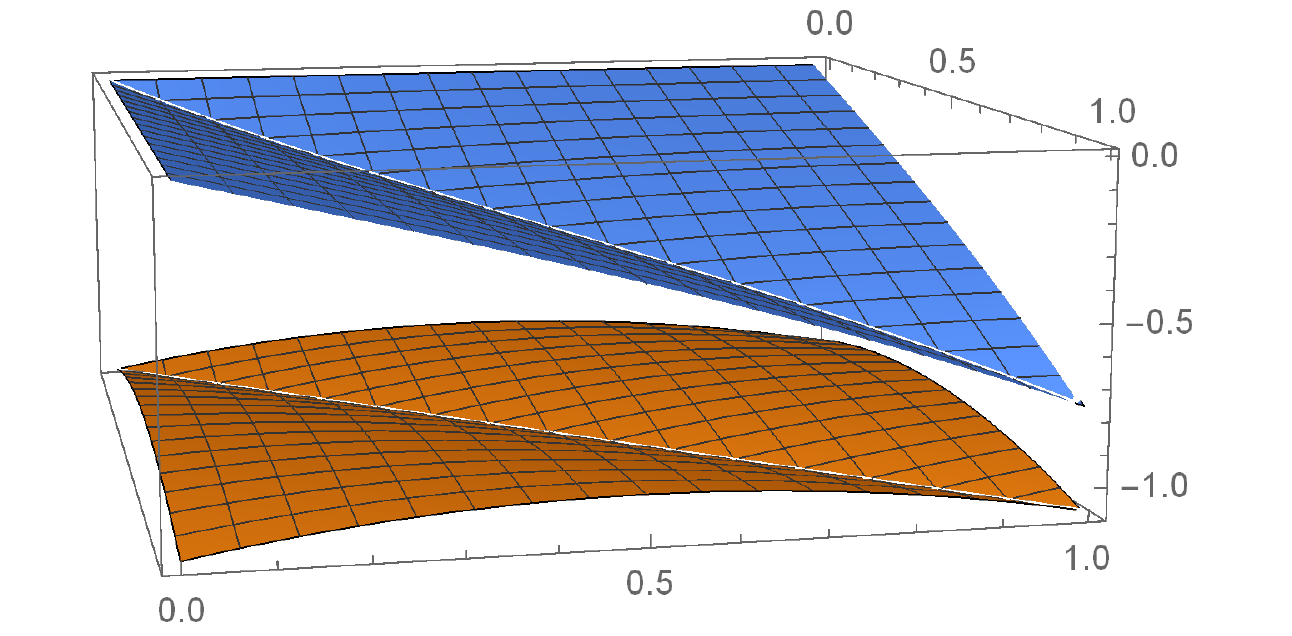}}
\subfloat{\includegraphics[width=7cm]{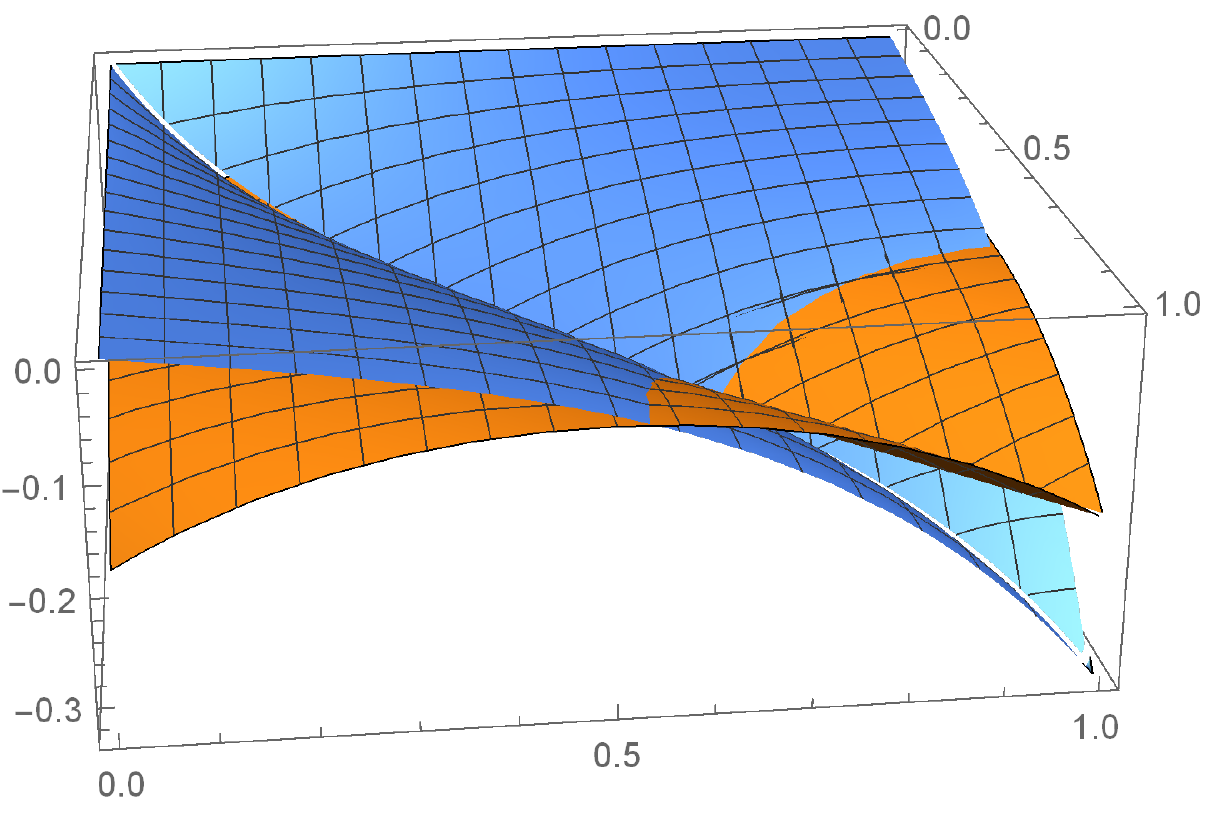}}
\caption{The blue graph corresponds to the function $G_{M_{2}}$ and the orange graph represents the  function  $G_{P}$ on $I\times I$. The figure on the left is the case $m=1$ and the figure on the right is the case $m=3$.}
	\label{23}
\end{figure}

\end{example}
Analogously, we study the relationship between Green's functions of Periodic and Mixed 2. 

\begin{theorem}
Assume that $L[\lambda]$ is nonresonant in $X_{P}$ and $X_{M_{2}}$, then it holds that
\begin{equation*}
\begin{aligned}
G_{M_{2}}[\lambda](t,s)&=G_{P}[\lambda](t,s)-r_{3}[\lambda](t)\, G_{M_{2}}[\lambda](1,s)+r_{4}[\lambda](t)\, \frac{\partial}{\partial t}G_{M_{2}}[\lambda](0,s)\\
&=G_{P}[\lambda](t,s)+\frac{\partial}{\partial s} G_{P}[\lambda](t,0)\, G_{M_{2}}[\lambda](1,s)+G_{P}[\lambda](t,0)\, \frac{\partial }{\partial t}G_{M_{2}}[\lambda](0,s),\;\; \forall (t,s)\in I\times I.
\end{aligned}
\end{equation*}
\end{theorem}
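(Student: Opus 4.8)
The plan is to follow exactly the superposition strategy already used in the earlier theorems of this section, now expressing the Mixed 2 boundary conditions in terms of the periodic boundary operators. First I would record that for the periodic problem the relevant functionals are $B_1(u)=u(0)-u(1)$ and $B_2(u)=u'(0)-u'(1)$, whose associated Green's function is $G_P[\lambda]$. By the defining problems \eqref{39} and \eqref{19}, the functions $r_3[\lambda]$ and $r_4[\lambda]$ are precisely the functions $\omega_1$ and $\omega_2$ from Lemma \ref{e-sol-hi} attached to this choice of boundary operators, since $r_3[\lambda]$ satisfies $B_1(u)=1,\ B_2(u)=0$ and $r_4[\lambda]$ satisfies $B_1(u)=0,\ B_2(u)=1$.

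Next I would rewrite the Mixed 2 conditions $u(0)=0$, $u'(1)=0$ in the form demanded by the periodic functionals. For a solution $u_{M_2}$ of the Mixed 2 problem one has $u_{M_2}(0)=0$ and $u_{M_2}'(1)=0$, hence
\begin{equation*}
u_{M_2}(0)-u_{M_2}(1)=-u_{M_2}(1),\qquad u_{M_2}'(0)-u_{M_2}'(1)=u_{M_2}'(0),
\end{equation*}
so the Mixed 2 problem takes the equivalent form
\begin{equation*}
L[\lambda]\,u(t)=\sigma(t),\;\; \text{a.e.}\;\; t\in I,\quad u(0)-u(1)=-u(1),\quad u'(0)-u'(1)=u'(0).
\end{equation*}

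Then I would apply Lemma \ref{e-sol-hi} with $g=G_P[\lambda]$, $\omega_1=r_3[\lambda]$, $\omega_2=r_4[\lambda]$, $\lambda_1=-u_{M_2}(1)$ and $\lambda_2=u_{M_2}'(0)$, obtaining
\begin{equation*}
u_{M_2}(t)=\int_0^1 G_P[\lambda](t,s)\,\sigma(s)\,ds-r_3[\lambda](t)\,u_{M_2}(1)+r_4[\lambda](t)\,u_{M_2}'(0).
\end{equation*}
Substituting the integral representations $u_{M_2}(1)=\int_0^1 G_{M_2}[\lambda](1,s)\,\sigma(s)\,ds$ and $u_{M_2}'(0)=\int_0^1 \frac{\partial}{\partial t}G_{M_2}[\lambda](0,s)\,\sigma(s)\,ds$, and using that the resulting identity holds for every $\sigma\in L^1(I)$, I can equate kernels and read off the first displayed equality of the statement. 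The second line then follows immediately by inserting the identities $r_3[\lambda](t)=-\frac{\partial}{\partial s}G_P[\lambda](t,0)$ and $r_4[\lambda](t)=G_P[\lambda](t,0)$ recorded earlier in the Preliminaries.

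I expect no genuinely hard step: nonresonance in both $X_P$ and $X_{M_2}$ guarantees the existence and uniqueness of all the Green's functions involved as well as of $r_3[\lambda]$ and $r_4[\lambda]$, so Lemma \ref{e-sol-hi} applies directly. The only point requiring care is the bookkeeping of signs and derivative conventions, namely that the constant $\lambda_1$ carries a minus sign ($-u_{M_2}(1)$) and that $u_{M_2}'(0)$ corresponds to the $t$-derivative $\frac{\partial}{\partial t}G_{M_2}[\lambda](0,s)$ rather than an $s$-derivative; getting these right is what produces the exact signs appearing in both displayed lines.
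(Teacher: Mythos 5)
Your proposal is correct and follows exactly the paper's method: the paper omits this proof as ``analogous to previous cases,'' and those cases all proceed precisely as you do, rewriting the boundary conditions in terms of the periodic functionals $u(0)-u(1)$ and $u'(0)-u'(1)$, applying Lemma \ref{e-sol-hi} with $\omega_1=r_3[\lambda]$, $\omega_2=r_4[\lambda]$, and equating kernels since the identity holds for all $\sigma\in L^1(I)$. The sign bookkeeping ($\lambda_1=-u_{M_2}(1)$, $\lambda_2=u_{M_2}'(0)$) and the substitutions $r_3[\lambda](t)=-\frac{\partial}{\partial s}G_P[\lambda](t,0)$, $r_4[\lambda](t)=G_P[\lambda](t,0)$ all check out against the stated formula.
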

The above equation is analogous to the equation relating Periodic to Mixed 1. So, in general, Green's functions of the Periodic and Mixed 2 problems will not be comparable either.
\begin{example}
We use in this example the same equation as in  Example \ref{11}. 
 Green's functions $G_{P}$ and $G_{M_{2}}$ are comparable for small values of $m$. Figure \ref{23} represents the Green's functions $G_{P}$ and $G_{M_{2}}$ for $m=1$ (in which case $G_{P}<G_{M_{2}}$) and for $m=3$ (which are not comparable).
\end{example}
Finally for the reverse process, we can obtain additional relations for the  Green's function of the Periodic  and the ones related to Mixed problems.
\begin{theorem}
If operator $L[\lambda]$ is nonresonant both in $X_{P}$ and $X_{M_{2}}$, then
\begin{equation*}
\begin{aligned}
G_{P}[\lambda](t,s)=&G_{M_{2}}[\lambda](t,s)+r_{7}[\lambda](t)\, G_{P}[\lambda](1,s)+r_{8}[\lambda](t)\, \frac{\partial }{\partial t}G_{P}[\lambda](0,s)\\
=&G_{M_{2}}[\lambda](t,s)-\frac{\partial }{\partial s}G_{M_{2}}[\lambda](t,0)\, G_{P}[\lambda](1,s)-G_{M_{2}}[\lambda](t,1)\, \frac{\partial }{\partial t}G_{P}[\lambda](0,s),\;\; t,s\in I.
\end{aligned}
\end{equation*}
\end{theorem}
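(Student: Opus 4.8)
The plan is to follow the same \emph{reverse process} used in the preceding theorems, now taking the Mixed~2 problem \eqref{e-Mixed-2} as the base problem and rewriting the periodic boundary conditions in terms of the Mixed~2 building blocks $B_1(u)=u(0)$ and $B_2(u)=u'(1)$. With this choice the homogeneous reference functions of Lemma~\ref{e-sol-hi} are precisely $\omega_1=r_7[\lambda]$ (the solution with $u(0)=1$, $u'(1)=0$) and $\omega_2=r_8[\lambda]$ (the solution with $u(0)=0$, $u'(1)=1$), while the base Green's function is $g=G_{M_2}[\lambda]$.

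First I would observe that the periodic conditions $u(0)=u(1)$ and $u'(0)=u'(1)$ can be recast, using $B_1$ and $B_2$, as $u(0)=u(1)$ and $u'(1)=u'(0)$; that is, the periodic solution $u_P$ satisfies the Mixed~2-type problem with inhomogeneous boundary data $\lambda_1=u_P(1)$ and $\lambda_2=u_P'(0)$. Applying Lemma~\ref{e-sol-hi} then gives $u_P(t)=\int_0^1 G_{M_2}[\lambda](t,s)\,\sigma(s)\,ds+u_P(1)\,r_7[\lambda](t)+u_P'(0)\,r_8[\lambda](t)$.

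Next I would replace the boundary data by their integral representations coming from the periodic Green's function, namely $u_P(1)=\int_0^1 G_P[\lambda](1,s)\,\sigma(s)\,ds$ and $u_P'(0)=\int_0^1 \frac{\partial}{\partial t}G_P[\lambda](0,s)\,\sigma(s)\,ds$. Substituting these into the previous identity, collecting everything under a single integral and using that the equality holds for every $\sigma\in L^1(I)$ yields the first displayed equality of the statement. The second equality is then immediate upon inserting the identities $r_7[\lambda](t)=-\frac{\partial}{\partial s}G_{M_2}[\lambda](t,0)$ and $r_8[\lambda](t)=-G_{M_2}[\lambda](t,1)$ recorded in the preliminaries.

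The computation is routine once the base is fixed, so the only genuine care needed is the bookkeeping: matching $\omega_1,\omega_2$ to the correct $r_7,r_8$ (and the data $u_P(1),u_P'(0)$ to the correct coefficients), and justifying the differentiation under the integral sign that produces $u_P'(0)=\int_0^1 \frac{\partial}{\partial t}G_P[\lambda](0,s)\,\sigma(s)\,ds$, which relies on the regularity of $G_P[\lambda]$ across the diagonal guaranteed by Definition~\ref{d-Green-Function}. I expect no conceptual obstacle beyond this sign and indexing accounting, exactly as in the analogous reverse-process results proved above.
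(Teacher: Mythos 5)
Your proposal is correct and follows exactly the superposition argument via Lemma \ref{e-sol-hi} that the paper uses for the analogous results (and explicitly omits here as ``analogous to those of previous cases''): you rewrite the periodic conditions as Mixed~2 data $u(0)=u_P(1)$, $u'(1)=u_P'(0)$, identify $\omega_1=r_7[\lambda]$, $\omega_2=r_8[\lambda]$, substitute the integral representations of $u_P(1)$ and $u_P'(0)$, and conclude by the arbitrariness of $\sigma$. The sign bookkeeping with $r_7[\lambda](t)=-\frac{\partial}{\partial s}G_{M_2}[\lambda](t,0)$ and $r_8[\lambda](t)=-G_{M_2}[\lambda](t,1)$ is also handled correctly, so nothing is missing.
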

\begin{theorem}
If operator $L[\lambda]$ is nonresonant both in $X_{P}$ and $X_{M_{1}}$, then	
\begin{equation*}
\begin{aligned}
G_{P}[\lambda](t,s)=&G_{M_{1}}[\lambda](t,s)+r_{9}[\lambda](t)\, \frac{\partial}{\partial t}G_{P}[\lambda](1,s)+r_{10}[\lambda](t)\, G_{P}[\lambda](0,s)\\
=&G_{M_{1}}[\lambda](t,s)+G_{M_{1}}[\lambda](t,0)\, \frac{\partial}{\partial t}G_{P}[\lambda](1,s)-\frac{\partial}{\partial s}G_{M_{1}}[\lambda](t,1)\, G_{P}[\lambda](0,s),\;\; (t,s) \in I\times I.
\end{aligned}
\end{equation*}
\end{theorem}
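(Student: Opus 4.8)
The plan is to follow exactly the reverse-process template used in the earlier subsections (e.g. the derivations of \eqref{e-GD-GN} and \eqref{rel-GM1-GN}): I would rewrite the periodic problem \eqref{e-periodic} as a Mixed 1 problem with nonhomogeneous boundary data and then invoke the representation formula of Lemma \ref{e-sol-hi}. The relevant functionals here are $B_1(u)=u'(0)$ and $B_2(u)=u(1)$, whose associated auxiliary solutions are precisely $\omega_1=r_9[\lambda]$ (the solution of $u'(0)=1,\ u(1)=0$) and $\omega_2=r_{10}[\lambda]$ (the solution of $u'(0)=0,\ u(1)=1$), and whose Green's function is $G_{M_1}[\lambda]$.

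First I would express the two periodic conditions $u(0)=u(1)$ and $u'(0)=u'(1)$ in terms of the Mixed 1 functionals by writing the periodic problem as
\begin{equation*}
L[\lambda]\,u(t)=\sigma(t),\;\;\text{a.e.}\;\;t\in I,\quad u'(0)=u'(1),\quad u(1)=u(0).
\end{equation*}
Each line is a faithful reformulation: the first is exactly $u'(0)=u'(1)$ and the second is exactly $u(0)=u(1)$, so this problem has the same unique solution $u_{P}$ as \eqref{e-periodic}. Reading the right-hand sides as the prescribed values $\lambda_1=u'_{P}(1)$ and $\lambda_2=u_{P}(0)$ in Lemma \ref{e-sol-hi}, the solution is represented as
\begin{equation*}
u_{P}(t)=\int_{0}^{1}G_{M_{1}}[\lambda](t,s)\,\sigma(s)\,ds+r_{9}[\lambda](t)\,u'_{P}(1)+r_{10}[\lambda](t)\,u_{P}(0).
\end{equation*}

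Next I would substitute $u'_{P}(1)=\int_{0}^{1}\frac{\partial}{\partial t}G_{P}[\lambda](1,s)\,\sigma(s)\,ds$ and $u_{P}(0)=\int_{0}^{1}G_{P}[\lambda](0,s)\,\sigma(s)\,ds$; since the resulting identity holds for every $\sigma\in L^{1}(I)$, it yields the first equality of the statement. The second equality then follows immediately by inserting the identifications $r_{9}[\lambda](t)=G_{M_{1}}[\lambda](t,0)$ and $r_{10}[\lambda](t)=-\frac{\partial}{\partial s}G_{M_{1}}[\lambda](t,1)$ recorded in the table of auxiliary functions in the Preliminaries.

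The only delicate point is the bookkeeping in the reformulation: one must pair the derivative periodic condition with the functional $u'(0)$ and the value periodic condition with the functional $u(1)$, so that the self-referential data are read off consistently as $\lambda_1=u'_{P}(1)$ and $\lambda_2=u_{P}(0)$ (note that, unlike the periodic--Neumann case, no use of the symmetry property $(G6)$ is needed, since the two Mixed 1 functionals align directly with the two periodic conditions). The applicability of Lemma \ref{e-sol-hi} is guaranteed by the assumed nonresonance of $L[\lambda]$ in both $X_{P}$ and $X_{M_{1}}$, and beyond this the computation is entirely routine.
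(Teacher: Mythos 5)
Your proposal is correct and follows exactly the template the paper uses throughout Section 3 (and explicitly in the proof of Theorem \ref{th3}): rewrite the periodic conditions as nonhomogeneous Mixed 1 data $u'(0)=u'_{P}(1)$, $u(1)=u_{P}(0)$, apply Lemma \ref{e-sol-hi} with $\omega_1=r_9[\lambda]$, $\omega_2=r_{10}[\lambda]$, substitute the Green's function representations of $u'_P(1)$ and $u_P(0)$, and conclude by the arbitrariness of $\sigma$, with the second equality following from the identifications $r_{9}[\lambda](t)=G_{M_{1}}[\lambda](t,0)$ and $r_{10}[\lambda](t)=-\frac{\partial}{\partial s}G_{M_{1}}[\lambda](t,1)$. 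The paper omits this particular proof as analogous to the preceding ones, and your argument is precisely that intended analogue, including the correct observation that no symmetry property of $G_P[\lambda]$ is needed here.
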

\section{Alternative decomposition of Green's functions}
This section is devoted to deduce additional relationships between the expressions of the Green's functions related to the different boundary value  conditions studied in previous section. The main difference consists on the fact that in this case, instead of Lemma \ref{e-sol-hi} as in previous section, we will use Theorem \ref{th_Ci}. 

It is important to point out that in this situation, as an application of equality  \eqref{e-formula} we will be able to express any considered Green's function in an explicit way from any other one.

The obtained expressions will be different to the ones deduce in previous section.
\subsection{Dirichlet and Mixed problems}
We start this subsection by expressing the Green's function of Mixed 2 problem in terms of the Green's function of Dirichlet problem.
\begin{theorem}
If operator $L[\lambda]$ is nonresonant in $X_{D}$ and $r'_{2}[\lambda](1)\neq 0$, then the following equality holds
\begin{equation}\label{e-formula2}
\begin{aligned}
G_{M_{2}}[\lambda](t,s)=&G_{D}[\lambda](t,s)-\frac{r_{2}[\lambda](t)}{r'_{2}[\lambda](1)}\, \frac{\partial}{\partial t}G_{D}[\lambda](1,s)\\
=&G_{D}[\lambda](t,s)-\dfrac{\frac{\partial }{\partial s}G_{D}\lambda](t,1)}{\frac{\partial^{2}}{\partial s\partial t}G_{D}[\lambda](1,1)}\, \frac{\partial}{\partial t}G_{D}[\lambda](1,s),\;\; \forall (t,s)\in I\times I. 
\end{aligned}
\end{equation}
\end{theorem}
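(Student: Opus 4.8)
The plan is to apply Theorem \ref{th_Ci} with the Dirichlet problem as the base problem, so that $g = G_{D}[\lambda]$, $B_1(u) = u(0)$, $B_2(u) = u(1)$, and consequently $\omega_1 = r_1[\lambda]$, $\omega_2 = r_2[\lambda]$. The whole task then reduces to rewriting the Mixed 2 conditions $u(0) = 0$, $u'(1) = 0$ in the perturbed form $B_1(u) = \delta_1 C_1(u)$, $B_2(u) = \delta_2 C_2(u)$ required by \eqref{e-linear-delta}, and then reading off \eqref{e-formula}.

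I would treat the two conditions separately. Since the first Mixed 2 condition $u(0)=0$ coincides with the first Dirichlet condition $B_1(u)=0$, I take $\delta_1 = 0$, so that $C_1$ plays no role and may be chosen arbitrarily. The delicate point is the second condition: the naive choice $C_2(u) = u'(1)$ fails, because imposing $u(1) = \delta_2\,u'(1)$ is a Robin-type condition rather than $u'(1)=0$ (indeed, matching the coefficients forces $\delta_2 \to \infty$). The key trick, which I expect to be the main obstacle, is to let $C_2$ reuse the boundary operator $B_2(u)=u(1)$ itself: taking $\delta_2 = 1$ and $C_2(u) = u(1) - u'(1)$, the second condition $B_2(u) = \delta_2\,C_2(u)$ becomes $u(1) = u(1) - u'(1)$, which collapses exactly to $u'(1)=0$. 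Recognizing that $C_2$ is allowed to involve $B_2$ is what makes the otherwise rigid condition $u(1)=\delta_2 C_2(u)$ reproduce the Mixed 2 boundary condition.

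With these choices I would form the matrix $A=(a_{ij})$, $a_{ij} = \delta_j\,C_i(\omega_j)$. Since $\delta_1 = 0$ its first column vanishes, and the only nontrivial entry is $a_{22} = C_2(r_2[\lambda]) = r_2[\lambda](1) - r'_2[\lambda](1) = 1 - r'_2[\lambda](1)$, using $r_2[\lambda](1)=1$. Hence $\det(I-A) = r'_2[\lambda](1)$, which is nonzero precisely by hypothesis, so Theorem \ref{th_Ci} applies; moreover $(I-A)^{-1}$ has entries $b_{21}=0$ and $b_{22} = 1/r'_2[\lambda](1)$.

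Finally I would substitute into formula \eqref{e-formula}. Because $\delta_1=0$ and $b_{21}=0$, only the term $\delta_2\,b_{22}\,\omega_2(t)\,C_2(g(\cdot,s))$ survives; and since $G_D[\lambda](1,s)=0$ one has $C_2(G_D[\lambda](\cdot,s)) = G_D[\lambda](1,s) - \frac{\partial}{\partial t}G_D[\lambda](1,s) = -\frac{\partial}{\partial t}G_D[\lambda](1,s)$. This yields at once the first equality in \eqref{e-formula2}. The second equality then follows by inserting the identity $r_2[\lambda](t) = \frac{\partial}{\partial s}G_D[\lambda](t,1)$ recorded earlier, together with $r'_2[\lambda](1) = \frac{\partial^2}{\partial s\,\partial t}G_D[\lambda](1,1)$, obtained by differentiating that identity in $t$ and evaluating at $t=1$ (equality of mixed partials).
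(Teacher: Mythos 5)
Your proposal is correct and follows essentially the same route as the paper: both apply Theorem \ref{th_Ci} with the Dirichlet problem as base ($g=G_D[\lambda]$, $\omega_1=r_1[\lambda]$, $\omega_2=r_2[\lambda]$), rewrite the condition $u'(1)=0$ by letting $C_2$ reuse $B_2(u)=u(1)$, compute $\det(I-A)=\pm r_2'[\lambda](1)$, and read off \eqref{e-formula}. The only difference is your choice $\delta_1=0$, $C_2(u)=u(1)-u'(1)$ versus the paper's $\delta_1=\delta_2=1$, $C_1=0$, $C_2(u)=u(1)+u'(1)$; these yield the same surviving term, and your variant is precisely the simplification the paper itself records in a subsequent remark.
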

\begin{proof}
We write the Mixed 2 problem based on the Dirichlet problem as follows
\begin{equation}\label{problema1}
L[\lambda]\, u(t)=\sigma(t),\;\; \text{a.e.} \;\; t\in I,\;\; u(0)=0,\;\; u(1)=u(1)+u'(1).
\end{equation}
Using the notation of Theorem \ref{th_Ci}, we have that in this case $C_{1}(u)=0$, $C_{2}(u)=u(1)+u'(1)$ and $\delta_{1}=\delta_{2}=1$. Moreover, $\omega_{1}(t)=r_{1}[\lambda](t)$, $\omega_{2}(t)=r_{2}[\lambda](t)$ and the matrix $A_{D}^{1}$ in this case is
\begin{equation*}
A_{D}^{1}=\begin{pmatrix}
0 && 0\\
r'_{1}[\lambda](1) && 1+r'_{2}[\lambda](1)
\end{pmatrix}
\end{equation*}
and $|I-A_{D}^{1}[\lambda]|=-r'_{2}[\lambda](1)\neq 0$. So, 
\begin{equation*}
b_{D}^{1}=(I-A_{D}^{1}[\lambda])^{-1}=\begin{pmatrix}
1 && 0\\
-\frac{r'_{1}[\lambda](1)}{r'_{2}[\lambda](1)} && -\frac{1}{r'_{2}[\lambda](1)}
\end{pmatrix}.
\end{equation*}
In consequence, as a direct application of the equality \eqref{e-formula} we obtain the result.
\end{proof}
\begin{corollary}
For all $\lambda<\lambda_{0}^{M_{2}}$, we infer that $r'_{2}[\lambda](1)>0$.	
\end{corollary}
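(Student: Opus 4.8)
The plan is to read off the sign of $r_2'[\lambda](1)$ from the decomposition \eqref{e-M2-D} relating $G_{M_2}[\lambda]$ and $G_D[\lambda]$, which is available here because for $\lambda<\lambda_0^{M_2}<\lambda_0^D$ the operator $L[\lambda]$ is nonresonant both in $X_{M_2}$ and in $X_D$ (the ordering $\lambda_0^{M_2}<\lambda_0^D$ being the one already used in this section). First I would differentiate the identity $G_{M_2}[\lambda](t,s)=G_D[\lambda](t,s)+r_2[\lambda](t)\,G_{M_2}[\lambda](1,s)$ with respect to $t$ and evaluate at $t=1$. Since $G_{M_2}[\lambda](1,s)$ does not depend on $t$ and, by the Mixed 2 boundary condition, $\frac{\partial}{\partial t}G_{M_2}[\lambda](1,s)=0$ for every $s\in(0,1)$, this produces the key identity
\begin{equation*}
r_2'[\lambda](1)\,G_{M_2}[\lambda](1,s)=-\frac{\partial}{\partial t}G_D[\lambda](1,s),\qquad s\in(0,1).
\end{equation*}

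Next I would collect the signs of the two factors for $\lambda<\lambda_0^{M_2}$. On the one hand, by Corollary \ref{corolario1} (see \cite[Corollary 4.6]{kkkk}), $G_{M_2}[\lambda]<0$ on $(0,1]\times(0,1]$ precisely when $\lambda<\lambda_0^{M_2}$, so $G_{M_2}[\lambda](1,s)<0$ for $s\in(0,1]$; in particular the factor multiplying $r_2'[\lambda](1)$ is nonzero and we may divide. On the other hand, since $\lambda<\lambda_0^{M_2}<\lambda_0^D$, the Dirichlet Green's function satisfies $G_D[\lambda]<0$ on $(0,1)\times(0,1)$ with $G_D[\lambda](1,s)=0$; hence, for fixed $s\in(0,1)$, the map $t\mapsto G_D[\lambda](t,s)$ increases to $0$ as $t\uparrow 1$, which forces $\frac{\partial}{\partial t}G_D[\lambda](1,s)\ge 0$. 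Substituting these signs gives $r_2'[\lambda](1)=-\frac{\partial}{\partial t}G_D[\lambda](1,s)\big/ G_{M_2}[\lambda](1,s)\ge 0$.

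The delicate point, and the main obstacle, is upgrading this to a strict inequality, because the numerator $\frac{\partial}{\partial t}G_D[\lambda](1,s)$ is only known a priori to be nonnegative. I would settle it by contradiction: if $r_2'[\lambda](1)=0$, then $r_2[\lambda]$ solves $L[\lambda]u=0$ together with $u(0)=0$ and $u'(1)=0$, and it is nontrivial because $r_2[\lambda](1)=1$; thus $\lambda$ would be an eigenvalue of the Mixed 2 problem, contradicting $\lambda<\lambda_0^{M_2}$. (Alternatively, a Hopf-type boundary argument shows directly that $\frac{\partial}{\partial t}G_D[\lambda](1,s)>0$.) Combining $r_2'[\lambda](1)\ge 0$ with $r_2'[\lambda](1)\neq 0$ yields $r_2'[\lambda](1)>0$, as claimed. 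I expect the only genuine work to be this last strictness step; the identity and the sign bookkeeping are routine once \eqref{e-M2-D} and the known sign results for $G_D[\lambda]$ and $G_{M_2}[\lambda]$ are invoked.
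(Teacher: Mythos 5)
Your proof is correct, and it takes a mildly but genuinely different route from the paper's. The paper reads the sign of $r_2'[\lambda](1)$ directly off the Section-4 decomposition \eqref{e-formula2}, $G_{M_2}[\lambda](t,s)=G_D[\lambda](t,s)-\tfrac{r_2[\lambda](t)}{r_2'[\lambda](1)}\tfrac{\partial}{\partial t}G_D[\lambda](1,s)$, combining $G_{M_2}[\lambda]<G_D[\lambda]$ (Corollary \ref{corolario21}), $r_2[\lambda]>0$ (Lemma \ref{signo1}) and $\tfrac{\partial}{\partial t}G_D[\lambda](1,s)>0$. You instead differentiate the Section-3 decomposition \eqref{e-M2-D} at $t=1$ and use the Mixed~2 boundary condition to obtain $r_2'[\lambda](1)\,G_{M_2}[\lambda](1,s)=-\tfrac{\partial}{\partial t}G_D[\lambda](1,s)$, which is precisely the identity linking the two decompositions. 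Your route buys something real: formula \eqref{e-formula2} is only stated under the hypothesis $r_2'[\lambda](1)\neq 0$, so the paper's argument implicitly presupposes part of what is being proved, whereas \eqref{e-M2-D} carries no such hypothesis; moreover, your resonance argument (if $r_2'[\lambda](1)=0$ then $r_2[\lambda]$ is a nontrivial element of the kernel of $L[\lambda]$ in $X_{M_2}$, forcing $\lambda\ge\lambda_0^{M_2}$) supplies the strict inequality that the paper asserts without comment. Two small bookkeeping remarks: the negativity of $G_{M_2}[\lambda]$ on $(0,1]\times(0,1]$ for $\lambda<\lambda_0^{M_2}$ should be credited to the external reference used in the proof of Corollary \ref{corolario1} rather than to that corollary itself (whose hypothesis is $\lambda<\lambda_0^N$), and your parenthetical Hopf-type alternative is indeed the cleanest way to see $\tfrac{\partial}{\partial t}G_D[\lambda](1,s)>0$ directly, since $G_D[\lambda](1,s)=\tfrac{\partial}{\partial t}G_D[\lambda](1,s)=0$ would force $G_D[\lambda](\cdot,s)\equiv 0$ on $[s,1]$ by uniqueness for the initial value problem.
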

\begin{proof}
From Corollary \ref{corolario21} we have that $G_{M_{2}}[\lambda]<G_{D}[\lambda]<0$ for all $\lambda<\lambda_{0}^{M_{2}}$  and, as a direct consequence, $\frac{\partial }{\partial t} G_{D}[\lambda](1,s)>0$. Lemma \ref{signo1} says us that $r_{2}[\lambda]>0$ on $(0,1]$ for all $\lambda<\lambda_{0}^{D}$. Since (\cite[page 108]{A}) $\lambda_{0}^{M_{2}}<\lambda_{0}^{D}$ we deduce, from equality \eqref{e-formula2} that $r'_{2}[\lambda](1)>0$.
\end{proof}

Similarly, we study the Mixed 1 problem  as a function of the Dirichlet one.
\begin{theorem}
	If operator $L[\lambda]$ is nonresonant in $X_{D}$ and $r'_{1}[\lambda](0)\neq 0$, then the following equality holds
\begin{equation*}
\begin{aligned}
G_{M_{1}}[\lambda](t,s)=&G_{D}[\lambda](t,s)-\frac{r_{1}[\lambda](t)}{r'_{1}[\lambda](0)}\, \frac{\partial }{\partial t}G_{D}[\lambda](0,s)\\
=&G_{D}[\lambda](t,s)-\dfrac{\frac{\partial }{\partial s}G_{D}\lambda](t,0)}{\frac{\partial^{2}}{\partial s\partial t}G_{D}[\lambda](0,0)}\, \frac{\partial }{\partial t}G_{D}[\lambda](0,s),\;\; \forall (t,s)\in I\times I.
\end{aligned}
\end{equation*}
\end{theorem}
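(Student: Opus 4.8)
The plan is to mirror exactly the proof of the preceding theorem (the $M_2$ case), applying Theorem \ref{th_Ci} with the Dirichlet Green's function $g=G_D[\lambda]$ as the base problem. First I would rewrite the Mixed 1 problem \eqref{e-Mixed-1} so that its boundary conditions are expressed through the Dirichlet operators $B_1(u)=u(0)$, $B_2(u)=u(1)$. Since Mixed 1 imposes $u'(0)=0$ and $u(1)=0$, I keep the shared condition $u(1)=0$ untouched and encode $u'(0)=0$ non-locally, writing
\[
L[\lambda]\,u(t)=\sigma(t),\;\;\text{a.e.}\;\;t\in I,\;\; u(0)=u(0)+u'(0),\;\; u(1)=0.
\]
In the notation of Theorem \ref{th_Ci} this corresponds to $C_1(u)=u(0)+u'(0)$, $C_2(u)=0$ and $\delta_1=\delta_2=1$, while $\omega_1=r_1[\lambda]$ and $\omega_2=r_2[\lambda]$ are precisely the functions already attached to the Dirichlet boundary operators.

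Next I would compute the matrix $A=(a_{ij})$ with $a_{ij}=\delta_j\,C_i(\omega_j)$. Using $r_1[\lambda](0)=1$, $r_2[\lambda](0)=0$ and $C_2\equiv 0$, this gives
\[
A=\begin{pmatrix} 1+r_1'[\lambda](0) & r_2'[\lambda](0)\\ 0 & 0 \end{pmatrix},
\]
so that $\det(I-A)=-r_1'[\lambda](0)$, which is nonzero exactly under the hypothesis $r_1'[\lambda](0)\neq 0$; this is what guarantees applicability of Theorem \ref{th_Ci}. Inverting, $B=(I-A)^{-1}$ has first column $\bigl(-1/r_1'[\lambda](0),\,0\bigr)$, which is all I need, since the second column of $B$ is paired in formula \eqref{e-formula} with $C_2(g(\cdot,s))=0$.

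Then I would substitute into \eqref{e-formula}. Because $G_D[\lambda](0,s)=0$ by the Dirichlet condition, the surviving term $C_1(G_D[\lambda](\cdot,s))$ collapses to $\frac{\partial}{\partial t}G_D[\lambda](0,s)$, and only the $b_{11}$ entry contributes, yielding
\[
G_{M_1}[\lambda](t,s)=G_D[\lambda](t,s)-\frac{r_1[\lambda](t)}{r_1'[\lambda](0)}\,\frac{\partial}{\partial t}G_D[\lambda](0,s),
\]
which is the first claimed identity. To reach the second form I would insert the relation $r_1[\lambda](t)=-\frac{\partial}{\partial s}G_D[\lambda](t,0)$ established earlier in the excerpt, differentiate it at $t=0$ to obtain $r_1'[\lambda](0)=-\frac{\partial^2}{\partial s\partial t}G_D[\lambda](0,0)$, and simplify the resulting quotient.

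Since the argument is strictly parallel to the $M_2$ theorem, I do not anticipate a genuine obstacle. The only points demanding care are the correct non-local reformulation of the condition $u'(0)=0$, the sign bookkeeping in forming $I-A$ and its inverse, and the implicit use of the equality of mixed partials $\frac{\partial^2}{\partial s\partial t}=\frac{\partial^2}{\partial t\partial s}$ when matching the denominator in the second expression.
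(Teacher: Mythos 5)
Your proposal is correct and follows essentially the same route as the paper: the same non-local reformulation $u(0)=u(0)+u'(0)$, $u(1)=0$ with $C_1(u)=u(0)+u'(0)$, $C_2\equiv 0$, the same matrix $A$ with $\det(I-A)=-r_1'[\lambda](0)$, and the same substitution into \eqref{e-formula}. The sign bookkeeping and the identification $r_1[\lambda](t)=-\frac{\partial}{\partial s}G_D[\lambda](t,0)$ in the final step are handled correctly.
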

\begin{proof}
 Let us rewrite Mixed 1 problem in the following way 
\begin{equation}\label{problema2}
L[\lambda]\, u(t)=\sigma(t),\;\; \text{a.e.} \;\; t\in I,\;\; u(0)=u(0)+u'(0),\;\; u(1)=0.
\end{equation}
In this case, we have that $C_{1}(u)=u(0)+u'(0)$, $C_{2}(u)=0$ and $\delta_{1}=\delta_{2}=1$. Moreover, $\omega_{1}(t)=r_{1}[\lambda](t)$, $\omega_{2}(t)=r_{2}[\lambda](t)$ and the matrix $A_{D}^{2}[\lambda]$ is
\begin{equation*}
A_{D}^{2}[\lambda]=\begin{pmatrix}
1+r'_{1}[\lambda](0) && r'_{2}[\lambda](0)\\
0 && 0
\end{pmatrix}
\end{equation*}
and $|I-A_{D}^{2}[\lambda]|=-r'_{1}[\lambda](0)\neq 0$. So, 
\begin{equation*}
b_{D}^{2}=(I-A_{D}^{2}[\lambda])^{-1}=\begin{pmatrix}
-\frac{1}{r'_{1}[\lambda](0)} && -\frac{r'_{2}[\lambda](0)}{r'_{1}[\lambda](0)}\\
0 && 1
\end{pmatrix}.
\end{equation*}
Therefore, using \eqref{e-formula}, we deduce the equality.
\end{proof} 
\begin{corollary}
	For all $\lambda<\lambda_{0}^{M_{1}}$, we infer that $r'_{1}[\lambda](0)<0$.	
\end{corollary}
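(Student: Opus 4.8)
The plan is to mirror the proof of the preceding corollary, reading off the sign of $r_1'[\lambda](0)$ directly from the identity just established relating $G_{M_1}[\lambda]$ and $G_D[\lambda]$. Rewriting that equality as
\begin{equation*}
G_{M_1}[\lambda](t,s)-G_D[\lambda](t,s)=-\frac{r_1[\lambda](t)}{r_1'[\lambda](0)}\,\frac{\partial}{\partial t}G_D[\lambda](0,s),
\end{equation*}
I would fix $\lambda<\lambda_0^{M_1}$ and restrict attention to $(t,s)\in(0,1)\times(0,1)$, where every factor appearing has a definite sign; the strategy is then simply to chase those signs through the identity.

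The key step is to pin down the three signs. First, by Corollary \ref{corolario3} we have $G_{M_1}[\lambda](t,s)<G_D[\lambda](t,s)<0$ on $(0,1)\times(0,1)$, so the left-hand side of the displayed identity is strictly negative. Second, since $\lambda_0^{M_1}<\lambda_0^{D}$, the hypothesis $\lambda<\lambda_0^{M_1}$ forces $\lambda<\lambda_0^{D}$, so Lemma \ref{signo2} applies and gives $r_1[\lambda](t)>0$ on $[0,1)$, in particular on $(0,1)$. Third, because $G_D[\lambda](0,s)=0$ by the Dirichlet condition at $t=0$ while $G_D[\lambda](t,s)<0$ for $t$ just above $0$, the one-sided $t$-derivative at the left endpoint satisfies $\frac{\partial}{\partial t}G_D[\lambda](0,s)<0$ for $s\in(0,1)$. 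Combining these, the product $r_1[\lambda](t)\,\frac{\partial}{\partial t}G_D[\lambda](0,s)$ is strictly negative, so the right-hand side of the identity equals $\tfrac{(\text{positive})}{r_1'[\lambda](0)}$; for this to be negative, as the left-hand side demands, we must have $r_1'[\lambda](0)<0$. This is precisely the sign companion to $r_2'[\lambda](1)>0$ from the previous corollary, with the roles of the endpoints $t=0$ and $t=1$ interchanged, as reflected in the use of Lemma \ref{signo2} in place of Lemma \ref{signo1}.

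The only delicate point I anticipate is justifying the strictness $\frac{\partial}{\partial t}G_D[\lambda](0,s)<0$ rather than merely $\le 0$, since the argument above would collapse if this derivative could vanish for some $s\in(0,1)$. This is the same strictness already invoked (for the symmetric endpoint $t=1$) in the proof of the companion corollary, and it can be secured by noting that a vanishing derivative would force $G_D[\lambda](\cdot,s)$ to have a double zero at $t=0$ and hence to be the trivial solution of $L[\lambda]u=0$ on $[0,s)$, which is incompatible with the jump in $\frac{\partial}{\partial t}G_D[\lambda]$ across $t=s$ required by Definition \ref{d-Green-Function}. With that strictness in hand, the sign chase is otherwise routine and yields the claim.
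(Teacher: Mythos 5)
Your proof is correct and follows essentially the same route as the paper: the identity $G_{M_1}[\lambda]-G_D[\lambda]=-\frac{r_1[\lambda](t)}{r_1'[\lambda](0)}\frac{\partial}{\partial t}G_D[\lambda](0,s)$, combined with Lemma \ref{signo2}, Corollary \ref{corolario3} and $\lambda_0^{M_1}<\lambda_0^{D}$, exactly as the paper does. Your extra care in justifying the strict inequality $\frac{\partial}{\partial t}G_D[\lambda](0,s)<0$ via the double-zero argument is a sound refinement of a point the paper leaves implicit.
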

\begin{proof}
From Lemma \ref{signo2} we know that $r_{1}[\lambda]>0$ on $[0,1)$ for all $\lambda<\lambda_{0}^{D}$. Corollary \ref{corolario3} ensures that $G_{M_{1}}[\lambda]<G_{D}[\lambda]$ for all $\lambda<\lambda_{0}^{M_{1}}$. Since $\lambda_{0}^{M_{1}}<\lambda_{0}^{D}$ (\cite[page 108]{A}) we arrive at the result. 
\end{proof}
\begin{remark}
In problem \eqref{problema1} we can do the calculations in a simpler way by taking $C_{1}(u)=u(0)+u'(0)$, $C_{2}(u)=0$, $\delta_{1}=1$ and $\delta_{2}=0$. The same can be done with problem \eqref{problema2} by taking $C_{1}(u)=C_{2}(u)=u(1)+u'(1)$, $\delta_{1}=0$ and $\delta_{2}=1$.
\end{remark}
We now do the process backwards by writing the Dirichlet problem based on the Mixed ones.
We arrive at the following results.
\begin{theorem}
If operator $L[\lambda]$ is nonresonant in $X_{M_{2}}$ and $r_{8}[\lambda](1)\neq 0$, then
\begin{equation*}
\begin{aligned}
G_{D}[\lambda](t,s)=&G_{M_{2}}[\lambda](t,s)-\frac{r_{8}[\lambda](t)}{r_{8}[\lambda](1)}\, G_{M_{2}}[\lambda](1,s)\\
=&G_{M_{2}}[\lambda](t,s)-\frac{G_{M_{2}}[\lambda](t,1)}{G_{M_{2}}[\lambda](1,1)}\, G_{M_{2}}[\lambda](1,s),\;\; \forall (t,s)\in I\times I.
\end{aligned}
\end{equation*}
\end{theorem}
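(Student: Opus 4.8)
The plan is to express the Dirichlet problem in terms of the Mixed 2 conditions and then apply the general formula \eqref{e-formula} from Theorem \ref{th_Ci}. Recalling that the Mixed 2 space is $X_{M_2}=\{u: u(0)=u'(1)=0\}$, I would rewrite the Dirichlet boundary conditions $u(0)=u(1)=0$ by keeping the first condition $u(0)=0$ untouched and encoding $u(1)=0$ through the Mixed 2 framework. Concretely, I would write the problem as
\begin{equation*}
L[\lambda]\,u(t)=\sigma(t),\;\;\text{a.e.}\;\;t\in I,\quad u(0)=0,\quad u'(1)=u'(1)-u(1),
\end{equation*}
so that the boundary operators $B_1,B_2$ become the Mixed 2 operators ($B_1(u)=u(0)$, $B_2(u)=u'(1)$) and, with the notation of Theorem \ref{th_Ci}, the nonlocal parts are $C_1(u)=0$, $C_2(u)=-u(1)$, together with $\delta_1=\delta_2=1$. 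Here $g$ is the Mixed 2 Green's function $G_{M_2}[\lambda]$, and the functions $\omega_1,\omega_2$ solving the auxiliary unit-data problems for the Mixed 2 conditions are precisely $r_9[\lambda]$ and $r_8[\lambda]$ (the latter being $\omega_2$ attached to the $u'(1)=1$ datum), consistent with the earlier identification $r_8[\lambda](t)=-G_{M_2}[\lambda](t,1)$.

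Second, I would assemble the matrix $A=(a_{ij})$ with $a_{ij}=\delta_j\,C_i(\omega_j)$. Since $C_1\equiv 0$, the first row of $A$ vanishes, and the only possibly nonzero entries come from $C_2(\omega_j)=-\omega_j(1)$. Because $\omega_1$ satisfies $u'(1)=0,u(0)=0$ while $\omega_2=r_8[\lambda]$ satisfies $u(0)=0,u'(1)=1$, I expect $A$ to be lower-triangular with a single nontrivial entry $a_{22}=C_2(\omega_2)=-r_8[\lambda](1)$. Then $\det(I-A)=1+r_8[\lambda](1)$, and the nonresonance hypothesis $r_8[\lambda](1)\neq 0$ will need to be reconciled with the determinant condition; I would double-check the sign conventions here, since the stated hypothesis is on $r_8[\lambda](1)$ rather than on $1+r_8[\lambda](1)$, which suggests the intended encoding may instead force $\det(I-A)$ to equal $-r_8[\lambda](1)$ up to the precise placement of the $-u(1)$ term. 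Computing $B=(I-A)^{-1}$ is then an immediate $2\times 2$ inversion.

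Third, I would substitute into \eqref{e-formula}, namely
\begin{equation*}
G_D[\lambda](t,s)=G_{M_2}[\lambda](t,s)+\sum_{i=1}^{2}\sum_{j=1}^{2}\delta_i\,b_{ij}\,\omega_i(t)\,C_j(G_{M_2}[\lambda](\cdot,s)),
\end{equation*}
and use that $C_1\equiv 0$ kills every term with $j=1$, while $C_2(G_{M_2}[\lambda](\cdot,s))=-G_{M_2}[\lambda](1,s)$. Only the surviving $\omega_2=r_8[\lambda]$ contribution remains, producing the factor $-r_8[\lambda](t)/r_8[\lambda](1)$ times $G_{M_2}[\lambda](1,s)$ after the inversion. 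The second line of the claimed identity then follows purely by rewriting $r_8[\lambda](t)=-G_{M_2}[\lambda](t,1)$ and $r_8[\lambda](1)=-G_{M_2}[\lambda](1,1)$, where the two minus signs cancel.

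The main obstacle I anticipate is bookkeeping the sign and the exact form of the nonlocal operator $C_2$ so that the determinant $\det(I-A)$ matches the stated hypothesis $r_8[\lambda](1)\neq 0$ and the final coefficient comes out as $-r_8[\lambda](t)/r_8[\lambda](1)$ rather than its negative or its reciprocal-shifted variant. The cleaner route, mirroring the \emph{Remark} after the earlier Mixed-Dirichlet theorems, is probably to take $\delta_1=0,\delta_2=1$ with $C_2(u)=u(1)$ (or $-u(1)$), reducing the computation to a $1\times 1$ effective problem; I would adopt whichever convention yields $\det(I-A)=\pm r_8[\lambda](1)$ directly, and verify it against the already-established relation $r_8[\lambda](t)=-G_{M_2}[\lambda](t,1)$ to fix all signs consistently.
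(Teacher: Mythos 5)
Your overall strategy is the paper's: rewrite the Dirichlet conditions on top of the Mixed 2 ones and apply Theorem \ref{th_Ci} with $g=G_{M_2}[\lambda]$, exactly as is done for problems \eqref{problema1} and \eqref{problema2}. Your displayed reformulation, $u(0)=0$, $u'(1)=u'(1)-u(1)$, is the right one. The gap is that you then read off the wrong functional from it: with $B_2(u)=u'(1)$ and $\delta_2=1$ one must take $C_2(u)=u'(1)-u(1)$, not $C_2(u)=-u(1)$. With your choice the condition $B_2(u)=\delta_2\,C_2(u)$ becomes the Robin condition $u'(1)=-u(1)$, which does not define the Dirichlet problem; the same objection applies to the ``cleaner route'' you propose at the end, since $C_2(u)=\pm u(1)$ gives $u'(1)=\pm u(1)$, again a Robin condition. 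This is why your determinant comes out as $1+r_8[\lambda](1)$ and fails to match the hypothesis: the mismatch you flag is not a sign convention to be fixed a posteriori, but a symptom of having encoded a different boundary value problem, so deferring it leaves the proof incomplete.

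With the correct $C_2(u)=u'(1)-u(1)$ everything closes. The auxiliary functions for the Mixed 2 base problem are $\omega_1=r_7[\lambda]$ (not $r_9[\lambda]$, which belongs to Mixed 1; this slip is harmless here since those terms drop out) and $\omega_2=r_8[\lambda]$. Since $C_1\equiv0$, the first row of $A$ vanishes; moreover $C_2(\omega_2)=r'_8[\lambda](1)-r_8[\lambda](1)=1-r_8[\lambda](1)$, hence $\det(I-A)=r_8[\lambda](1)$, which is exactly the stated nondegeneracy hypothesis, and $b_{12}=0$, $b_{22}=1/r_8[\lambda](1)$. Since $\frac{\partial}{\partial t}G_{M_2}[\lambda](1,s)=0$, one gets $C_2(G_{M_2}[\lambda](\cdot,s))=-G_{M_2}[\lambda](1,s)$, and \eqref{e-formula} leaves only the $i=j=2$ term, giving $G_{D}[\lambda](t,s)=G_{M_2}[\lambda](t,s)-\frac{r_8[\lambda](t)}{r_8[\lambda](1)}\,G_{M_2}[\lambda](1,s)$; the second line then follows from $r_8[\lambda](t)=-G_{M_2}[\lambda](t,1)$, as you correctly note.
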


\begin{theorem}
	If operator $L[\lambda]$ is nonresonant in $X_{M_{1}}$ and $r_{9}[\lambda](0)\neq 0$, then
\begin{equation*}
\begin{aligned}
G_{D}[\lambda](t,s)=&G_{M_{1}}[\lambda](t,s)-\frac{r_{9}[\lambda](t)}{r_{9}[\lambda](0)}\, G_{M_{1}}[\lambda](0,s)\\
=&G_{M_{1}}[\lambda](t,s)-\dfrac{G_{M_{1}}[\lambda](t,0)}{G_{M_{1}}[\lambda](0,0)}\, G_{M_{1}}[\lambda](0,s),\;\; \forall (t,s)\in I\times I.
\end{aligned}
\end{equation*}
\end{theorem}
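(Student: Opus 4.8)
The plan is to mirror the argument used for the other ``reverse'' decompositions in this section, taking $G_{M_1}[\lambda]$ as the base Green's function $g$ in Theorem \ref{th_Ci} and the Mixed~1 operator as the pair $B_1,B_2$. Concretely, I would rewrite the Dirichlet problem as a Mixed~1 problem perturbed by a nonlocal term, encoding the condition $u(0)=0$ through the Mixed~1 boundary operator $B_1(u)=u'(0)$:
\begin{equation*}
L[\lambda]\,u(t)=\sigma(t),\;\;\text{a.e.}\;\;t\in I,\;\;u'(0)=u(0)+u'(0),\;\;u(1)=0.
\end{equation*}
This is of the form \eqref{e-linear-delta} with $C_1(u)=u(0)+u'(0)$, $C_2(u)=0$ and $\delta_1=\delta_2=1$, while the functions $\omega_1,\omega_2$ of Lemma \ref{e-sol-hi} associated with the Mixed~1 conditions are $\omega_1=r_9[\lambda]$ and $\omega_2=r_{10}[\lambda]$.

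First I would assemble the matrix $A=(a_{ij})$ via $a_{ij}=\delta_j\,C_i(\omega_j)$. Using $r_9'[\lambda](0)=1$ and $r_{10}'[\lambda](0)=0$ together with $C_2\equiv 0$, only the first column survives and one gets
\begin{equation*}
A=\begin{pmatrix} r_9[\lambda](0)+1 & r_{10}[\lambda](0)\\ 0 & 0\end{pmatrix},\qquad \det(I-A)=-r_9[\lambda](0).
\end{equation*}
Thus the hypothesis $r_9[\lambda](0)\neq 0$ is precisely the condition $\det(I-A)\neq 0$ required by Theorem \ref{th_Ci} (and it guarantees nonresonance in $X_D$), and inverting gives $b_{11}=-1/r_9[\lambda](0)$ together with $b_{21}=0$.

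Next I would substitute into formula \eqref{e-formula}. The key simplification is that $C_2(g(\cdot,s))=0$ and that, since $G_{M_1}[\lambda](\cdot,s)$ satisfies the Mixed~1 condition $\frac{\partial}{\partial t}G_{M_1}[\lambda](0,s)=0$, we have $C_1(g(\cdot,s))=G_{M_1}[\lambda](0,s)+\frac{\partial}{\partial t}G_{M_1}[\lambda](0,s)=G_{M_1}[\lambda](0,s)$. Because $b_{21}=0$, only the $i=j=1$ term contributes, yielding
\begin{equation*}
G_D[\lambda](t,s)=G_{M_1}[\lambda](t,s)-\frac{r_9[\lambda](t)}{r_9[\lambda](0)}\,G_{M_1}[\lambda](0,s),
\end{equation*}
which is the first equality. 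The second equality then follows from the identity $r_9[\lambda](t)=G_{M_1}[\lambda](t,0)$ recorded in the preliminaries (so in particular $r_9[\lambda](0)=G_{M_1}[\lambda](0,0)$).

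I expect the only delicate point to be the bookkeeping in the middle two steps: correctly evaluating $C_i(\omega_j)$ from the boundary data defining $r_9[\lambda]$ and $r_{10}[\lambda]$, and recognizing that the derivative term in $C_1(g(\cdot,s))$ drops out thanks to the Mixed~1 boundary condition. Everything else is a routine specialization of Theorem \ref{th_Ci}, identical in spirit to the proofs already given for the Mixed problems expressed in terms of Dirichlet.
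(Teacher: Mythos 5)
Your proposal is correct and follows exactly the route the paper intends for this result: rewriting the Dirichlet condition $u(0)=0$ as the nonlocal perturbation $u'(0)=u(0)+u'(0)$ of the Mixed~1 problem and specializing Theorem \ref{th_Ci} with $\omega_1=r_9[\lambda]$, $\omega_2=r_{10}[\lambda]$, $\det(I-A)=-r_9[\lambda](0)$, and $C_1(G_{M_1}[\lambda](\cdot,s))=G_{M_1}[\lambda](0,s)$ thanks to $\frac{\partial}{\partial t}G_{M_1}[\lambda](0,s)=0$. The only cosmetic slip is the phrase ``only the first column survives'' (it is the second \emph{row} of $A$ that vanishes because $C_2\equiv 0$), but the matrix you write down and all subsequent computations are correct.
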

\subsection{Neumann and Dirichlet problems}
In this case we study the relationships between the Green's function of the Neumann and Dirichlet problems. Reasoning as in the previous subsection we have the next result. 
\begin{theorem}
	If operator $L[\lambda]$ is nonresonant in $X_{D}$ and $$|I-A_{D}^{3}[\lambda]|:=r'_{1}[\lambda](0)\, r'_{2}[\lambda](1)-r'_{2}[\lambda](0)\, r'_{1}[\lambda](1)\neq 0,$$ then it holds that 
\begin{equation*}
\small
\begin{aligned}
G_{N}[\lambda](t,s)=&G_{D}[\lambda](t,s)-\frac{r'_{2}[\lambda](1)}{|I-A_{D}^{3}[\lambda]|}\, r_{1}[\lambda](t)\, \frac{\partial}{\partial t} G_{D}[\lambda](0,s)+\frac{r'_{1}[\lambda](1)}{|I-A_{D}^{3}[\lambda]|}\, r_{2}[\lambda](t)\, \frac{\partial}{\partial t} G_{D}[\lambda](0,s)\\
&+\frac{r'_{2}[\lambda](0)}{|I-A_{D}^{3}[\lambda]|}\, r_{1}[\lambda](t)\, \frac{\partial }{\partial t} G_{D}[\lambda](1,s)-\frac{r'_{1}[\lambda](0)}{|I-A_{D}^{3}[\lambda]|}\, r_{2}[\lambda](t)\, \frac{\partial}{\partial t} G_{D}[\lambda](1,s)\\
=&G_{D}[\lambda](t,s)+\frac{1}{|I-A_{D}^{3}[\lambda]|}\,\frac{\partial^{2} }{\partial s\partial t} G_{D}[\lambda](1,1)\,  \frac{\partial }{\partial s} G_{D}[\lambda](t,0)\, \frac{\partial}{\partial t} G_{D}[\lambda](0,s)\\
&-\frac{1}{|I-A_{D}^{3}[\lambda]|}\,\frac{\partial^{2} }{\partial s\partial t} G_{D}[\lambda](1,0)\, \frac{\partial }{\partial s} G_{D}[\lambda](t,1)\, \frac{\partial}{\partial t} G_{D}[\lambda](0,s)\\
&-\frac{1}{|I-A_{D}^{3}[\lambda]|}\,\frac{\partial^{2} }{\partial s\partial t} G_{D}[\lambda](0,1)\,  \frac{\partial }{\partial s} G_{D}[\lambda](t,0)  \, \frac{\partial }{\partial t} G_{D}[\lambda](1,s)\\
&+\frac{1}{|I-A_{D}^{3}[\lambda]|}\,\frac{\partial^{2} }{\partial s\partial t} G_{D}[\lambda](0,0)  \, \frac{\partial }{\partial s}G_{D}[\lambda](t,1)\, \frac{\partial}{\partial t} G_{D}[\lambda](1,s),\;\; \forall (t,s)\in I\times I.
\end{aligned}
\end{equation*}	
\end{theorem}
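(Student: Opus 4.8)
The plan is to apply Theorem \ref{th_Ci} taking the Dirichlet problem as the base problem, so that $g=G_{D}[\lambda]$ and the associated boundary operators are $B_{1}(u)=u(0)$, $B_{2}(u)=u(1)$, with auxiliary functions $\omega_{1}=r_{1}[\lambda]$, $\omega_{2}=r_{2}[\lambda]$ solving \eqref{5} and \eqref{4}. First I would rewrite the Neumann conditions $u'(0)=u'(1)=0$ in the form demanded by \eqref{e-linear-delta}, namely
\begin{equation*}
u(0)=u(0)+u'(0),\qquad u(1)=u(1)+u'(1),
\end{equation*}
which identifies $C_{1}(u)=u(0)+u'(0)$, $C_{2}(u)=u(1)+u'(1)$ and $\delta_{1}=\delta_{2}=1$. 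With this choice problem \eqref{e-linear-delta} coincides exactly with the Neumann problem \eqref{e-Neumann}, so that the determinant condition $|I-A_{D}^{3}[\lambda]|\neq0$ is precisely the nonresonance of $L[\lambda]$ in $X_{N}$, and Theorem \ref{th_Ci} guarantees that the Green's function produced by \eqref{e-formula} is exactly $G_{N}[\lambda]$.

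Next I would compute the matrix $A_{D}^{3}[\lambda]=(a_{ij})$ with $a_{ij}=\delta_{j}\,C_{i}(\omega_{j})$. Using the boundary data $r_{1}[\lambda](0)=1$, $r_{1}[\lambda](1)=0$, $r_{2}[\lambda](0)=0$, $r_{2}[\lambda](1)=1$, one gets
\begin{equation*}
A_{D}^{3}[\lambda]=\begin{pmatrix}
1+r'_{1}[\lambda](0) & r'_{2}[\lambda](0)\\
r'_{1}[\lambda](1) & 1+r'_{2}[\lambda](1)
\end{pmatrix},
\end{equation*}
so that $|I-A_{D}^{3}[\lambda]|=r'_{1}[\lambda](0)\,r'_{2}[\lambda](1)-r'_{2}[\lambda](0)\,r'_{1}[\lambda](1)$, which is assumed nonzero. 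Inverting the $2\times2$ matrix $I-A_{D}^{3}[\lambda]$ gives $B=(b_{ij})$ with $b_{11}=-r'_{2}[\lambda](1)/|I-A_{D}^{3}[\lambda]|$, $b_{12}=r'_{2}[\lambda](0)/|I-A_{D}^{3}[\lambda]|$, $b_{21}=r'_{1}[\lambda](1)/|I-A_{D}^{3}[\lambda]|$ and $b_{22}=-r'_{1}[\lambda](0)/|I-A_{D}^{3}[\lambda]|$.

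Then I would evaluate the terms $C_{j}(G_{D}[\lambda](\cdot,s))$ appearing in \eqref{e-formula}. Since $G_{D}[\lambda](0,s)=G_{D}[\lambda](1,s)=0$ by the Dirichlet boundary conditions, the zeroth-order parts vanish and only the derivatives survive, namely $C_{1}(G_{D}[\lambda](\cdot,s))=\frac{\partial}{\partial t}G_{D}[\lambda](0,s)$ and $C_{2}(G_{D}[\lambda](\cdot,s))=\frac{\partial}{\partial t}G_{D}[\lambda](1,s)$. Substituting these, together with $\omega_{1}=r_{1}[\lambda]$, $\omega_{2}=r_{2}[\lambda]$ and the entries $b_{ij}$, into the double sum of \eqref{e-formula} yields directly the first displayed expression for $G_{N}[\lambda](t,s)$.

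Finally, to recover the second form I would translate every $r_{1}[\lambda]$, $r_{2}[\lambda]$ and their endpoint derivatives into partials of $G_{D}[\lambda]$ through the identities established in the preliminaries, $r_{1}[\lambda](t)=-\frac{\partial}{\partial s}G_{D}[\lambda](t,0)$ and $r_{2}[\lambda](t)=\frac{\partial}{\partial s}G_{D}[\lambda](t,1)$, whence $r'_{1}[\lambda](\tau)=-\frac{\partial^{2}}{\partial s\partial t}G_{D}[\lambda](\tau,0)$ and $r'_{2}[\lambda](\tau)=\frac{\partial^{2}}{\partial s\partial t}G_{D}[\lambda](\tau,1)$ for $\tau\in\{0,1\}$. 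I expect the only genuine difficulty to be the bookkeeping of signs and indices in the $2\times2$ inversion and in this last substitution; there is no analytic obstacle, as the entire statement is a mechanical specialization of formula \eqref{e-formula}.
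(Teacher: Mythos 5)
Your proposal is correct and follows essentially the same route the paper intends (the text introduces this theorem with ``Reasoning as in the previous subsection,'' i.e.\ a direct application of Theorem \ref{th_Ci} with $C_{1}(u)=u(0)+u'(0)$, $C_{2}(u)=u(1)+u'(1)$, $\delta_{1}=\delta_{2}=1$); your matrix $A_{D}^{3}[\lambda]$, the inverse $(I-A_{D}^{3}[\lambda])^{-1}$, the evaluations $C_{j}(G_{D}[\lambda](\cdot,s))=\frac{\partial}{\partial t}G_{D}[\lambda](j-1,s)$, and the final substitution of $r_{1},r_{2}$ by $s$-partials of $G_{D}[\lambda]$ all check out against the stated formula.
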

We now reverse the process by studying the Dirichlet problem as a function of the Neumann one 
and applying  analogous calculations.  
\begin{theorem}
Assume that $L[\lambda]$ is nonresonant in $X_{N}$ and  $$|I-A_{N}^{1}[\lambda]|:=r_{5}[\lambda](0)\, r_{6}[\lambda](1)-r_{6}[\lambda](0)\, r_{5}[\lambda](1)\neq 0,$$ then
\begin{equation*}
\begin{aligned}
G_{D}[\lambda](t,s)=&G_{N}[\lambda](t,s)-\frac{1}{|I-A_{N}^{1}[\lambda]|}\Big(\, r_{5}[\lambda](t)\,\Big(-r_{6}[\lambda](1)\, G_{N}[\lambda](0,s)+r_{6}[\lambda](0)\, G_{N}[\lambda](1,s)\Big) \Big.\\
&\Big.+r_{6}[\lambda](t)\,\Big(r_{5}[\lambda](1)\,G_{N}[\lambda](0,s)-r_{5}[\lambda](0)\,G_{N}[\lambda](1,s) \Big)\Big)\\
=&G_{N}[\lambda](t,s)-\frac{1}{|I-A_{N}^{1}[\lambda]|}\Big( G_{N}[\lambda](t,0)\,\Big(G_{N}[\lambda](1,1)\, G_{N}[\lambda](0,s)-G_{N}[\lambda](0,1)\, G_{N}[\lambda](1,s)\Big) \Big.\\
&\Big.-G_{N}[\lambda](t,1)\,\Big(G_{N}[\lambda](1,0)\,G_{N}[\lambda](0,s)-G_{N}[\lambda](0,0)\,G_{N}[\lambda](1,s) \Big)\Big),\;\; \forall (t,s) \in I\times I.
\end{aligned}
\end{equation*}
\end{theorem}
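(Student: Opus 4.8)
The plan is to apply Theorem~\ref{th_Ci} with the \emph{Neumann} problem as the base problem, so that $g=G_{N}[\lambda]$, the base conditions are $B_1(u)=u'(0)$, $B_2(u)=u'(1)$, and hence $\omega_1=r_{5}[\lambda]$, $\omega_2=r_{6}[\lambda]$ (the unique solutions of the homogeneous equation normalized by $r_5'(0)=1$, $r_5'(1)=0$ and $r_6'(0)=0$, $r_6'(1)=1$). First I would write the Dirichlet conditions as a perturbation of the Neumann ones, exactly as in the previous subsections, by imposing
\[
u'(0)=u(0)+u'(0),\qquad u'(1)=u(1)+u'(1),
\]
i.e. $B_1(u)=C_1(u)$ and $B_2(u)=C_2(u)$ with $C_1(u)=u(0)+u'(0)$, $C_2(u)=u(1)+u'(1)$ and $\delta_1=\delta_2=1$; each identity forces the corresponding condition $u(0)=0$, $u(1)=0$, so the perturbed problem is precisely the Dirichlet one.

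Next I would assemble the matrix $A_{N}^{1}[\lambda]=(a_{ij})$ of Theorem~\ref{th_Ci} through $a_{ij}=\delta_j\,C_i(\omega_j)=C_i(\omega_j)$, using the endpoint values of $r_5[\lambda]$ and $r_6[\lambda]$ coming from their definitions. A direct computation gives
\[
I-A_{N}^{1}[\lambda]=\begin{pmatrix} -r_{5}[\lambda](0) & -r_{6}[\lambda](0)\\ -r_{5}[\lambda](1) & -r_{6}[\lambda](1)\end{pmatrix},
\]
whose determinant is exactly $|I-A_{N}^{1}[\lambda]|=r_{5}[\lambda](0)\,r_{6}[\lambda](1)-r_{6}[\lambda](0)\,r_{5}[\lambda](1)$. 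Thus the hypothesis $|I-A_{N}^{1}[\lambda]|\neq 0$ is precisely the invertibility condition required to apply Theorem~\ref{th_Ci} (and it guarantees that the Dirichlet problem itself is nonresonant). Inverting this $2\times2$ matrix yields the coefficients $b_{ij}$ explicitly.

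The decisive simplification is the evaluation of the quantities $C_j\big(G_{N}[\lambda](\cdot,s)\big)$ appearing in \eqref{e-formula}. Since $G_{N}[\lambda](\cdot,s)$ satisfies the homogeneous Neumann conditions in its first variable, one has $\tfrac{\partial}{\partial t}G_{N}[\lambda](0,s)=\tfrac{\partial}{\partial t}G_{N}[\lambda](1,s)=0$, so the derivative contributions drop and only boundary values survive: $C_1\big(G_{N}[\lambda](\cdot,s)\big)=G_{N}[\lambda](0,s)$ and $C_2\big(G_{N}[\lambda](\cdot,s)\big)=G_{N}[\lambda](1,s)$. Substituting the $b_{ij}$, the functions $\omega_1=r_{5}[\lambda]$, $\omega_2=r_{6}[\lambda]$, and these reduced values into \eqref{e-formula}, and then grouping the four terms of the double sum by $r_{5}[\lambda](t)$ and $r_{6}[\lambda](t)$, produces the first displayed identity. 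The second identity follows at once by using the relations $r_{5}[\lambda](t)=G_{N}[\lambda](t,0)$ and $r_{6}[\lambda](t)=-G_{N}[\lambda](t,1)$ from the Preliminaries (together with their endpoint values), which rewrite every occurrence of $r_5[\lambda]$, $r_6[\lambda]$ as a boundary value of $G_{N}[\lambda]$.

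I expect the only delicate point to be the sign and index bookkeeping in the double sum of \eqref{e-formula}, combined with the reduction of the $C_j\big(G_{N}[\lambda](\cdot,s)\big)$: it is here that a stray sign would propagate into the global factor $1/|I-A_{N}^{1}[\lambda]|$. I would therefore carry out the $2\times2$ inversion and the term collection carefully, and cross-check the two equalities against one another through $r_5[\lambda]=G_{N}[\lambda](\cdot,0)$ and $r_6[\lambda]=-G_{N}[\lambda](\cdot,1)$, which provides an independent verification of both the coefficients and the overall sign.
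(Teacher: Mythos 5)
Your strategy is exactly the one the paper uses (the paper itself gives no written proof here, only the remark that one ``reverses the process'' of the preceding subsection and applies Theorem \ref{th_Ci} with $g=G_N[\lambda]$, $C_1(u)=u(0)+u'(0)$, $C_2(u)=u(1)+u'(1)$, $\delta_1=\delta_2=1$). Your matrix $I-A_N^1[\lambda]$, its determinant, the inverse, and the reduction $C_j(G_N[\lambda](\cdot,s))=G_N[\lambda](j-1,s)$ via the homogeneous Neumann conditions are all correct.

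The one step you did not actually execute is the final assembly, and it is precisely there that your claim fails: substituting
$b_{11}=-r_6[\lambda](1)/|I-A_N^1[\lambda]|$, $b_{12}=r_6[\lambda](0)/|I-A_N^1[\lambda]|$, $b_{21}=r_5[\lambda](1)/|I-A_N^1[\lambda]|$, $b_{22}=-r_5[\lambda](0)/|I-A_N^1[\lambda]|$
into \eqref{e-formula} yields the displayed bracket multiplied by $+\tfrac{1}{|I-A_N^1[\lambda]|}$, not by $-\tfrac{1}{|I-A_N^1[\lambda]|}$ as in the statement. So the computation does \emph{not} ``produce the first displayed identity''; it produces the identity with the opposite global sign. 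You can confirm which sign is right by the consistency check you yourself suggested: at $t=0$ the bracket collapses to
\begin{equation*}
\bigl(-r_5[\lambda](0)\,r_6[\lambda](1)+r_6[\lambda](0)\,r_5[\lambda](1)\bigr)\,G_N[\lambda](0,s)=-|I-A_N^1[\lambda]|\;G_N[\lambda](0,s),
\end{equation*}
so the version with $+\tfrac{1}{|I-A_N^1[\lambda]|}$ gives $G_D[\lambda](0,s)=0$ as it must, while the printed version gives $2\,G_N[\lambda](0,s)\neq0$. In other words, your derivation is sound and the sign discrepancy lies in the statement itself (a sign slip that propagates identically to the second displayed form via $r_5[\lambda]=G_N[\lambda](\cdot,0)$, $r_6[\lambda]=-G_N[\lambda](\cdot,1)$); but as written, your proof asserts it reaches a formula that a correct execution of your own steps does not reach, so you must carry out the $2\times2$ inversion and the term collection explicitly and record the resulting $+$ sign.
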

\subsection{Periodic and Dirichlet problems}
In this section we give a relationship between $G_{P}[\lambda]$ and $G_{D}[\lambda]$ following the same steps than in previous sections. 

\begin{theorem}
	Assume that $L[\lambda]$ is nonresonant in $X_{D}$ and $$|I-A_{D}^{4}[\lambda]|:=2r'_{1}[\lambda](1)+r'_{2}[\lambda](1)-r'_{1}[\lambda](0)\neq 0,$$ then 
\begin{equation*}
\small
\begin{aligned}
G_{P}[\lambda](t,s)=&G_{D}[\lambda](t,s)+\frac{(r_{1}[\lambda](t)+r_{2}[\lambda](t))}{|I-A_{D}^{4}[\lambda]|}\, \left(\frac{\partial}{\partial t} G_{D}[\lambda](0,s)-\frac{\partial }{\partial t} G_{D}[\lambda](1,s)\right)\\
=&G_{D}[\lambda](t,s)+\dfrac{ \left(\frac{\partial}{\partial s} G_{D}[\lambda](t,1)-\frac{\partial}{\partial s} G_{D}[\lambda](t,0)\right)}{|I-A_{D}^{4}[\lambda]|}\,\left(\frac{\partial}{\partial t} G_{D}[\lambda](0,s)-\frac{\partial }{\partial t} G_{D}[\lambda](1,s)\right),\;\; \forall (t,s)\in I\times I.
\end{aligned}
\end{equation*}
\end{theorem}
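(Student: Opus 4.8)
The plan is to apply Theorem \ref{th_Ci}, taking the Dirichlet Green's function as the base, i.e. $g=G_D[\lambda]$, so that the two auxiliary solutions attached to the Dirichlet operators $B_1(u)=u(0)$, $B_2(u)=u(1)$ are precisely $\omega_1=r_1[\lambda]$ and $\omega_2=r_2[\lambda]$, the solutions of \eqref{5} and \eqref{4}. First I would rewrite the periodic conditions of \eqref{e-periodic} in terms of these Dirichlet operators, exactly as in the proof of Theorem \ref{th3}: the system $u(0)=u(1)$, $u(1)=u(1)+u'(0)-u'(1)$ is equivalent to the periodic conditions. This identifies $C_1(u)=u(1)$, $C_2(u)=u(1)+u'(0)-u'(1)$ and $\delta_1=\delta_2=1$, putting the problem in the exact form \eqref{e-linear-delta} required by Theorem \ref{th_Ci}.

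Next I would compute the matrix $A=(a_{ij})$ via $a_{ij}=\delta_j\,C_i(\omega_j)$, using the boundary data $r_1[\lambda](0)=1$, $r_1[\lambda](1)=0$, $r_2[\lambda](0)=0$, $r_2[\lambda](1)=1$. This gives $a_{11}=0$, $a_{12}=1$, $a_{21}=r_1'[\lambda](0)-r_1'[\lambda](1)$ and $a_{22}=1+r_2'[\lambda](0)-r_2'[\lambda](1)$, so that
\[
I-A=\begin{pmatrix} 1 & -1\\ r_1'[\lambda](1)-r_1'[\lambda](0) & r_2'[\lambda](1)-r_2'[\lambda](0)\end{pmatrix}.
\]
A direct expansion yields $\det(I-A)=r_1'[\lambda](1)-r_1'[\lambda](0)+r_2'[\lambda](1)-r_2'[\lambda](0)$. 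The main obstacle is to recognize that this coincides with the quantity $|I-A_D^4[\lambda]|=2r_1'[\lambda](1)+r_2'[\lambda](1)-r_1'[\lambda](0)$ appearing in the hypothesis. To bridge the gap I would invoke the constancy of the Wronskian $W=r_1[\lambda]\,r_2'[\lambda]-r_1'[\lambda]\,r_2[\lambda]$ (legitimate since $L[\lambda]u=0$ has no first-order term), and evaluate it at the endpoints: $W(0)=r_2'[\lambda](0)$ and $W(1)=-r_1'[\lambda](1)$, whence $r_2'[\lambda](0)=-r_1'[\lambda](1)$. Substituting this identity into the determinant turns it into $2r_1'[\lambda](1)+r_2'[\lambda](1)-r_1'[\lambda](0)$, confirming both that the nonresonance condition in the statement is the correct one and that $\det(I-A)\neq0$ is exactly what Theorem \ref{th_Ci} needs.

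Finally I would assemble the Green's function through formula \eqref{e-formula}. Inverting $I-A$ gives $b_{12}=b_{22}=1/|I-A_D^4[\lambda]|$ (because the top row of $I-A$ is $(1,-1)$, the corresponding cofactors both equal $1$), while the values $C_j(g(\cdot,s))$ are read off from the Dirichlet boundary behaviour of $g=G_D[\lambda]$: since $G_D[\lambda](1,s)=0$ one has $C_1(g(\cdot,s))=0$ and $C_2(g(\cdot,s))=\frac{\partial}{\partial t}G_D[\lambda](0,s)-\frac{\partial}{\partial t}G_D[\lambda](1,s)$. Hence the double sum in \eqref{e-formula} collapses to its $j=2$ terms, leaving
\[
G_P[\lambda](t,s)=G_D[\lambda](t,s)+\frac{r_1[\lambda](t)+r_2[\lambda](t)}{|I-A_D^4[\lambda]|}\left(\frac{\partial}{\partial t}G_D[\lambda](0,s)-\frac{\partial}{\partial t}G_D[\lambda](1,s)\right),
\]
which is the first displayed equality. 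The second equality then follows by substituting the already-established identifications $r_1[\lambda](t)=-\frac{\partial}{\partial s}G_D[\lambda](t,0)$ and $r_2[\lambda](t)=\frac{\partial}{\partial s}G_D[\lambda](t,1)$, so that $r_1[\lambda](t)+r_2[\lambda](t)=\frac{\partial}{\partial s}G_D[\lambda](t,1)-\frac{\partial}{\partial s}G_D[\lambda](t,0)$. I expect the only genuinely delicate point to be the Wronskian reduction of the determinant; everything else is routine bookkeeping with $B=(I-A)^{-1}$ and the boundary values of $G_D[\lambda]$.
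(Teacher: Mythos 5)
Your proof is correct and follows exactly the route the paper intends for this result: an application of Theorem \ref{th_Ci} with the Dirichlet Green's function as base, $C_1(u)=u(1)$, $C_2(u)=u(1)+u'(0)-u'(1)$, $\delta_1=\delta_2=1$, and formula \eqref{e-formula}. Your use of the constancy of the Wronskian to get $r_2'[\lambda](0)=-r_1'[\lambda](1)$, so that the raw determinant $r_1'[\lambda](1)-r_1'[\lambda](0)+r_2'[\lambda](1)-r_2'[\lambda](0)$ coincides with the stated $|I-A_{D}^{4}[\lambda]|$, correctly supplies the one simplification the paper leaves implicit.
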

\begin{remark}
Notice that from \eqref{e-Gp-Gd}, using the last equality we have that 
\begin{equation*}
G_{P}[\lambda](1,s)=\frac{1}{|I-A_{D}^{4}[\lambda]|}\,\left[\frac{\partial}{\partial t} G_{D}[\lambda](0,s)- \frac{\partial}{\partial t} G_{D}[\lambda](1,s)\right].
\end{equation*}
\end{remark}
Finally, doing the process backwards by studying the Dirichlet problem as a function of the Periodic one 
we obtain the next theorem. 
\begin{theorem}\label{teorema2}
	If operator $L[\lambda]$ is nonresonant in $X_{P}$ and $r_{4}[\lambda](1)\neq 0$, then 
\begin{equation*}
\begin{aligned}
G_{D}[\lambda](t,s)=&G_{P}[\lambda](t,s)-\frac{r_{4}[\lambda](t)}{r_{4}[\lambda](1)}\, G_{P}[\lambda](1,s)\\
=&G_{P}[\lambda](t,s)-\frac{G_{P}[\lambda](t,0)}{G_{P}[\lambda](1,0)}\, G_{P}[\lambda](1,s),\;\; \forall (t,s)\in I\times I.
\end{aligned}
\end{equation*}
\end{theorem}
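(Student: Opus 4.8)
The plan is to invoke Theorem~\ref{th_Ci} with the periodic problem \eqref{e-periodic} as the base problem, so that $g=G_{P}[\lambda]$ plays the role of the base Green's function in formula \eqref{e-formula}, the base functionals are $B_1(u)=u(0)-u(1)$ and $B_2(u)=u'(0)-u'(1)$, and the associated auxiliary solutions are $\omega_1=r_3[\lambda]$ and $\omega_2=r_4[\lambda]$. Since the goal is to express $G_D[\lambda]$ through $G_P[\lambda]$, I would rewrite the Dirichlet conditions $u(0)=u(1)=0$ using these periodic functionals. A Dirichlet solution already satisfies $u(0)-u(1)=0$, so the first periodic condition is kept untouched, while the second is used to encode the missing constraint $u(1)=0$. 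Concretely, I would pose
\[
L[\lambda]\,u(t)=\sigma(t),\quad u(0)-u(1)=0,\quad u'(0)-u'(1)=u'(0)-u'(1)+u(1),
\]
which is plainly equivalent to the Dirichlet problem. In the language of Theorem~\ref{th_Ci} this amounts to taking $\delta_1=0$ (with $C_1$ irrelevant, say $C_1\equiv 0$), $\delta_2=1$ and $C_2(u)=u'(0)-u'(1)+u(1)$.

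Next I would assemble the matrix $A=(a_{ij})$ with $a_{ij}=\delta_j\,C_i(\omega_j)$. Because $\delta_1=0$ and $C_1\equiv 0$, the only possibly nonzero entry is $a_{22}=C_2(r_4[\lambda])$; using that $r_4[\lambda]$ solves \eqref{19}, so that $r'_4[\lambda](0)-r'_4[\lambda](1)=1$, one obtains $a_{22}=1+r_4[\lambda](1)$. Hence $\det(I-A)=-r_4[\lambda](1)$, which is exactly where the hypothesis $r_4[\lambda](1)\neq 0$ enters, and inverting gives $b_{22}=-1/r_4[\lambda](1)$ together with $b_{21}=0$. Substituting these data into \eqref{e-formula}, the double sum collapses (only the index $i=2,\;j=2$ survives) to
\[
G_{D}[\lambda](t,s)=G_{P}[\lambda](t,s)-\frac{r_4[\lambda](t)}{r_4[\lambda](1)}\,C_2\bigl(G_{P}[\lambda](\cdot,s)\bigr).
\]

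The key simplification, and the step I expect to be the most delicate, is the evaluation of $C_2\bigl(G_{P}[\lambda](\cdot,s)\bigr)=\frac{\partial}{\partial t}G_{P}[\lambda](0,s)-\frac{\partial}{\partial t}G_{P}[\lambda](1,s)+G_{P}[\lambda](1,s)$: since $G_{P}[\lambda](\cdot,s)$ satisfies the homogeneous periodic boundary conditions, the identity $\frac{\partial}{\partial t}G_{P}[\lambda](0,s)=\frac{\partial}{\partial t}G_{P}[\lambda](1,s)$ (condition $(G6)$, already used in the proof of Theorem~\ref{th3}) forces the first two terms to cancel, leaving $C_2\bigl(G_{P}[\lambda](\cdot,s)\bigr)=G_{P}[\lambda](1,s)$. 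This yields the first displayed equality of the statement, and inserting the representation $r_4[\lambda](t)=G_{P}[\lambda](t,0)$ from the table of auxiliary functions (so $r_4[\lambda](1)=G_{P}[\lambda](1,0)$) produces the second one. The only remaining care is to check that the rewritten boundary value problem is genuinely the Dirichlet problem and that nonresonance of $L[\lambda]$ in $X_P$ together with $r_4[\lambda](1)\neq 0$ furnishes precisely the hypotheses demanded by Theorem~\ref{th_Ci}.
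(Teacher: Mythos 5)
Your proposal is correct and follows the same route the paper intends for this theorem: rewriting the Dirichlet conditions as perturbations of the periodic ones and applying Theorem~\ref{th_Ci} with $g=G_{P}[\lambda]$, $\omega_2=r_4[\lambda]$, so that $\det(I-A)=-r_4[\lambda](1)$ and the double sum in \eqref{e-formula} collapses to the single term $-\frac{r_4[\lambda](t)}{r_4[\lambda](1)}\,G_P[\lambda](1,s)$. The computation of $C_2\bigl(G_P[\lambda](\cdot,s)\bigr)=G_P[\lambda](1,s)$ via the periodic boundary conditions of the Green's function and the identification $r_4[\lambda](t)=G_P[\lambda](t,0)$ are both handled exactly as in the paper's analogous proofs.
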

\begin{remark}
From Theorem \ref{teorema2} we deduce Theorem \ref{teorema1}:
\begin{equation*}
G_{P}[\lambda](t,s)<G_{D}[\lambda](t,s)<0,\;\; \forall (t,s) \in  (0,1)\times (0,1),\;\;\forall \lambda<\lambda_{0}^{P}.
\end{equation*}
\end{remark}
\subsection{Neumann and Mixed problems}
We do the same as before to study the relationship between the Green's functions of Neumann and Mixed problems 1 and 2.
\begin{theorem}
Assume that $L[\lambda]$ is nonresonant $X_{N}$ and $r_{5}[\lambda](0)\neq 0$, then it holds that
\begin{equation*}
\begin{aligned}
G_{M_{2}}[\lambda](t,s)=&G_{N}[\lambda](t,s)-\frac{r_{5}[\lambda](t)}{r_{5}[\lambda](0)}\, G_{N}[\lambda](0,s)\\
=&G_{N}[\lambda](t,s)-\frac{G_{N}[\lambda](t,0)}{G_{N}[\lambda](0,0)}\, G_{N}[\lambda](0,s),\;\; \forall (t,s)\in I\times I.
\end{aligned}
\end{equation*}
\end{theorem}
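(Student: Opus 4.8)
The plan is to apply Theorem \ref{th_Ci} with the Neumann problem \eqref{e-Neumann} in the role of the base problem, so that $g=G_{N}[\lambda]$ and the boundary functionals are $B_{1}(u)=u'(0)$ and $B_{2}(u)=u'(1)$. With this choice, comparing defining problems shows that the two auxiliary functions $\omega_{1},\omega_{2}$ of Theorem \ref{th_Ci} are exactly $\omega_{1}=r_{5}[\lambda]$ and $\omega_{2}=r_{6}[\lambda]$. First I would rewrite the Mixed 2 conditions $u(0)=0$, $u'(1)=0$ in the form $B_{i}(u)=\delta_{i}\,C_{i}(u)$. Since $u'(1)=0$ is already $B_{2}(u)=0$, I take $\delta_{2}=0$ and $C_{2}\equiv 0$; and since $u(0)=0$ is equivalent to $u'(0)=u'(0)+u(0)$, I take $\delta_{1}=1$ and $C_{1}(u)=u(0)+u'(0)$. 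This is the simpler bookkeeping already flagged in the remark following problem \eqref{problema2}.

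Next I would compute the matrix $A=(a_{ij})$ with $a_{ij}=\delta_{j}\,C_{i}(\omega_{j})$. Because $\delta_{2}=0$ and $C_{2}\equiv 0$, the only possibly nonzero entry is $a_{11}=C_{1}(r_{5}[\lambda])=r_{5}[\lambda](0)+r_{5}'[\lambda](0)=r_{5}[\lambda](0)+1$, using $r_{5}'[\lambda](0)=1$ from the defining problem of $r_{5}[\lambda]$. Hence $I-A$ is diagonal with entries $-r_{5}[\lambda](0)$ and $1$, so $\det(I-A)=-r_{5}[\lambda](0)$, which is nonzero precisely under the stated hypothesis $r_{5}[\lambda](0)\neq 0$; consequently $b_{11}=-1/r_{5}[\lambda](0)$ and the remaining entries of $B=(I-A)^{-1}$ play no role.

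Finally I would substitute into the master formula \eqref{e-formula}. Since $\delta_{2}=0$ kills every term with $i=2$ and $b_{12}=0$ kills the term with $(i,j)=(1,2)$, only the $(i,j)=(1,1)$ term survives, giving $G_{M_{2}}[\lambda](t,s)=G_{N}[\lambda](t,s)-\frac{r_{5}[\lambda](t)}{r_{5}[\lambda](0)}\,C_{1}\bigl(G_{N}[\lambda](\cdot,s)\bigr)$. Here I would use that $G_{N}[\lambda](\cdot,s)$ satisfies the Neumann condition $\frac{\partial}{\partial t}G_{N}[\lambda](0,s)=0$, so that $C_{1}\bigl(G_{N}[\lambda](\cdot,s)\bigr)=G_{N}[\lambda](0,s)+\frac{\partial}{\partial t}G_{N}[\lambda](0,s)=G_{N}[\lambda](0,s)$, which yields the first equality. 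The second equality then follows immediately from the identity $r_{5}[\lambda](t)=G_{N}[\lambda](t,0)$ established earlier, evaluated also at $t=0$ in the denominator.

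The hard part is not the algebra but the two identifications that make it collapse: recognizing that the base functionals force $\omega_{1}=r_{5}[\lambda]$, $\omega_{2}=r_{6}[\lambda]$, and, more delicately, simplifying $C_{1}\bigl(G_{N}[\lambda](\cdot,s)\bigr)$ by invoking the boundary behaviour $\frac{\partial}{\partial t}G_{N}[\lambda](0,s)=0$ of the Neumann Green's function — omitting this would leave a spurious derivative term and spoil the clean expression. Checking that $\det(I-A)=-r_{5}[\lambda](0)$ is what links the invertibility hypothesis of Theorem \ref{th_Ci} directly to the condition $r_{5}[\lambda](0)\neq 0$ appearing in the statement.
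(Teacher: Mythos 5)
Your proposal is correct and follows exactly the route the paper intends for this theorem (it omits the explicit proof but the adjacent subsections carry out the identical computation with Theorem \ref{th_Ci}): base problem Neumann so that $\omega_{1}=r_{5}[\lambda]$, $\omega_{2}=r_{6}[\lambda]$, the choice $C_{1}(u)=u(0)+u'(0)$, $\delta_{1}=1$, $\delta_{2}=0$, the determinant $\det(I-A)=-r_{5}[\lambda](0)$ matching the stated hypothesis, and the simplification $C_{1}\bigl(G_{N}[\lambda](\cdot,s)\bigr)=G_{N}[\lambda](0,s)$ via the Neumann boundary condition of the Green's function. The final identification $r_{5}[\lambda](t)=G_{N}[\lambda](t,0)$ is the one established in the preliminaries, so nothing is missing.
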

Using previous expression, we have another proof of Corollary \ref{corolario1}. Indeed, we know that $G_{N}[\lambda]<0$ for all $\lambda<\lambda_{0}^{N}$ and $r_{5}[\lambda](t)=G_{N}[\lambda](t,0)<0$, using the above equality we deduce for all $\lambda<\lambda_{0}^{N}$ that
\begin{center}
$G_{N}[\lambda](t,s)<G_{M_{2}}[\lambda](t,s),\;\;$ for all $(t,s)\in I\times I$.
\end{center}

\begin{remark}
	We will give as a consequence of the last equality a proof of Corollary \ref{corolario2}. Taking into account that $G_{N}[\lambda]<0$ for all $\lambda<\lambda_{0}^{N}$ and $r_{6}[\lambda](t)>0$, $t\in I$, it follows that for all $\lambda<\lambda_{0}^{N}$
	\begin{center}
		$G_{N}[\lambda](t,s)<G_{M_{1}}[\lambda](t,s)<0,\;\;$ for all $(t,s)\in [0,1)\times [0,1)$. 
	\end{center}
\end{remark}

Doing the calculations analogously for the Mixed 1 problem as a function of Neumann problem
we have the relationship between Green's functions given in the next theorem.
\begin{theorem}
Assume that $L[\lambda]$ is nonresonant in $X_{N}$ and $r_{6}[\lambda](1)\neq 0$, then
\begin{equation*}
\begin{aligned}
G_{M_{1}}[\lambda](t,s)=&G_{N}[\lambda](t,s)-\frac{r_{6}[\lambda](t)}{r_{6}[\lambda](1)}\, G_{N}[\lambda](1,s)\\
=&G_{N}[\lambda](t,s)-\frac{G_{N}[\lambda](t,1)}{G_{N}[\lambda](1,1)}\, G_{N}[\lambda](1,s),\;\; \forall (t,s)\in I\times I.
\end{aligned}
\end{equation*}
\end{theorem}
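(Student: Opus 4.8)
The plan is to invoke Theorem~\ref{th_Ci} taking the Neumann problem as the base problem, so that $g=G_{N}[\lambda]$, $B_1(u)=u'(0)$, $B_2(u)=u'(1)$, and consequently $\omega_1=r_{5}[\lambda]$, $\omega_2=r_{6}[\lambda]$. The Mixed~1 conditions are $u'(0)=u(1)=0$; since the first of these coincides with the first Neumann condition, I would leave it untouched and only recast the second one through the Neumann operator $B_2(u)=u'(1)$. Concretely, I would rewrite the Mixed~1 problem as
\[
L[\lambda]\,u(t)=\sigma(t),\quad u'(0)=0,\quad u'(1)=u'(1)+u(1),
\]
which in the notation of Theorem~\ref{th_Ci} means $C_1(u)=0$, $C_2(u)=u'(1)+u(1)$ and $\delta_1=\delta_2=1$.

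Next I would assemble the matrix $A=(a_{ij})$ with $a_{ij}=\delta_j\,C_i(\omega_j)$. Because $C_1\equiv 0$, the whole first row vanishes, and evaluating $C_2$ on $\omega_1=r_5[\lambda]$ and $\omega_2=r_6[\lambda]$ reduces, using $r_5'[\lambda](1)=0$ and $r_6'[\lambda](1)=1$ (immediate from the defining conditions of $r_5$ and $r_6$), to $a_{21}=r_5[\lambda](1)$ and $a_{22}=1+r_6[\lambda](1)$. Hence $\det(I-A)=-r_6[\lambda](1)$, so the invertibility hypothesis of Theorem~\ref{th_Ci} is exactly the assumption $r_6[\lambda](1)\neq 0$ made in the statement. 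Inverting the resulting lower-triangular matrix $I-A$ yields $B=(b_{ij})$; the only entries I will actually need are $b_{12}=0$ and $b_{22}=-1/r_6[\lambda](1)$.

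Finally I would substitute into formula~\eqref{e-formula}. Since $C_1\big(G_{N}[\lambda](\cdot,s)\big)=0$, only the $j=2$ contributions survive, and here the key simplification is
\[
C_2\big(G_{N}[\lambda](\cdot,s)\big)=\tfrac{\partial}{\partial t}G_{N}[\lambda](1,s)+G_{N}[\lambda](1,s)=G_{N}[\lambda](1,s),
\]
because $G_{N}[\lambda](\cdot,s)$ satisfies the Neumann condition $\tfrac{\partial}{\partial t}G_{N}[\lambda](1,s)=0$. Collecting the surviving terms gives the first equality of the theorem, namely $G_{M_1}[\lambda](t,s)=G_{N}[\lambda](t,s)-\tfrac{r_6[\lambda](t)}{r_6[\lambda](1)}\,G_{N}[\lambda](1,s)$, and the second equality follows by inserting $r_6[\lambda](t)=-G_{N}[\lambda](t,1)$ (so that $r_6[\lambda](1)=-G_{N}[\lambda](1,1)$), whereupon the two minus signs cancel.

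I expect no serious obstacle: the argument is a direct instance of the template already used in this section. The only points requiring care are choosing the rewriting of the second boundary condition so that $\det(I-A)$ reproduces exactly $-r_6[\lambda](1)$ and the sign bookkeeping in passing to the second form; the collapse $\tfrac{\partial}{\partial t}G_{N}[\lambda](1,s)=0$ is what turns $C_2\big(G_{N}[\lambda](\cdot,s)\big)$ into the clean value $G_{N}[\lambda](1,s)$ and makes the final expression match.
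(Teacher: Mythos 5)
Your proposal is correct and follows exactly the route the paper intends: the paper omits the detailed computation, stating only that one writes the Mixed~1 problem as a perturbation of the Neumann one and applies Theorem~\ref{th_Ci}, which is precisely your rewriting $u'(1)=u'(1)+u(1)$ with $C_1\equiv 0$, $C_2(u)=u'(1)+u(1)$, yielding $\det(I-A)=-r_6[\lambda](1)$ and $b_{22}=-1/r_6[\lambda](1)$. All the key simplifications you identify, including $C_2\bigl(G_N[\lambda](\cdot,s)\bigr)=G_N[\lambda](1,s)$ and the sign cancellation via $r_6[\lambda](t)=-G_N[\lambda](t,1)$, are exactly what the paper's template produces.
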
 
We now do the process backwards by writing the Neumann problem based on the Mixed problems. 
Doing the calculations in a similar way we arrive at the next theorems. 
\begin{theorem}
Assume that $L[\lambda]$ is nonresonant in $X_{M_{1}}$ and $r'_{10}[\lambda](1)\neq 0$, then it holds that
\begin{equation*}
\begin{aligned}
G_{N}[\lambda](t,s)=&G_{M_{1}}[\lambda](t,s)-\frac{r_{10}[\lambda](t)}{r'_{10}[\lambda](1)}\, \frac{\partial }{\partial t} G_{M_{1}}[\lambda](1,s)\\
=&G_{M_{1}}[\lambda](t,s)-\dfrac{\frac{\partial }{\partial s}G_{M_{1}}[\lambda](t,1)}{ \frac{\partial^{2}}{\partial s\partial t}G_{M_{1}}[\lambda](1,1)}\, \frac{\partial }{\partial t} G_{M_{1}}[\lambda](1,s),\;\; \forall (t,s)\in I\times I.
\end{aligned}
\end{equation*}
\end{theorem}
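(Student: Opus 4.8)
The plan is to apply Theorem \ref{th_Ci} with the Mixed 1 problem \eqref{e-Mixed-1} playing the role of the base problem, so that $g=G_{M_{1}}[\lambda]$, $B_{1}(u)=u'(0)$ and $B_{2}(u)=u(1)$. With this choice the auxiliary functions of Lemma \ref{e-sol-hi} are exactly $\omega_{1}=r_{9}[\lambda]$ (the solution of $u'(0)=1$, $u(1)=0$) and $\omega_{2}=r_{10}[\lambda]$ (the solution of $u'(0)=0$, $u(1)=1$). First I would rewrite the Neumann conditions $u'(0)=u'(1)=0$ in the form required by \eqref{e-linear-delta}, namely
\begin{equation*}
L[\lambda]\,u(t)=\sigma(t),\;\;\text{a.e.}\;\;t\in I,\;\; u'(0)=0,\;\; u(1)=u(1)+u'(1),
\end{equation*}
which corresponds to $C_{1}(u)=0$, $C_{2}(u)=u(1)+u'(1)$ and $\delta_{1}=\delta_{2}=1$.

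Next I would assemble the matrix $A=(a_{ij})$ with $a_{ij}=\delta_{j}\,C_{i}(\omega_{j})$. Since $C_{1}\equiv 0$, the whole first row vanishes, and using $r_{9}[\lambda](1)=0$ and $r_{10}[\lambda](1)=1$ one gets $a_{21}=r_{9}'[\lambda](1)$ and $a_{22}=1+r_{10}'[\lambda](1)$, so that
\begin{equation*}
I-A=\begin{pmatrix} 1 & 0\\ -r_{9}'[\lambda](1) & -r_{10}'[\lambda](1)\end{pmatrix},\qquad \det(I-A)=-r_{10}'[\lambda](1).
\end{equation*}
The hypothesis $r_{10}'[\lambda](1)\neq 0$ is thus precisely the invertibility condition $\det(I-A)\neq 0$ demanded by Theorem \ref{th_Ci}, and inverting this lower triangular matrix gives $b_{12}=0$ and $b_{22}=-1/r_{10}'[\lambda](1)$ (the only entries of $B$ that will matter).

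Finally I would feed these data into formula \eqref{e-formula}. Because $C_{1}(g(\cdot,s))=0$ only the terms with $j=2$ survive, and because $b_{12}=0$ only $i=2$ contributes; moreover $G_{M_{1}}[\lambda](1,s)=0$ forces $C_{2}(g(\cdot,s))=\frac{\partial}{\partial t}G_{M_{1}}[\lambda](1,s)$. This collapses the double sum to the single term $b_{22}\,r_{10}[\lambda](t)\,\frac{\partial}{\partial t}G_{M_{1}}[\lambda](1,s)$, yielding the first displayed equality. For the second equality I would substitute the identity from the correspondence table, $r_{10}[\lambda](t)=-\frac{\partial}{\partial s}G_{M_{1}}[\lambda](t,1)$, and differentiate it at $t=1$ to obtain $r_{10}'[\lambda](1)=-\frac{\partial^{2}}{\partial s\partial t}G_{M_{1}}[\lambda](1,1)$; the two minus signs cancel and produce the stated quotient. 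The only genuine bookkeeping risk is this last sign-and-derivative translation, together with remembering to use the vanishing of $G_{M_{1}}[\lambda]$ at $t=1$ in order to discard the $G_{M_{1}}[\lambda](1,s)$ piece of $C_{2}$; everything else is a routine evaluation of the $2\times 2$ data.
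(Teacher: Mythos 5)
Your proposal is correct and follows exactly the route the paper intends for this theorem: apply Theorem \ref{th_Ci} with $g=G_{M_{1}}[\lambda]$, $\omega_{1}=r_{9}[\lambda]$, $\omega_{2}=r_{10}[\lambda]$, $C_{1}(u)=0$, $C_{2}(u)=u(1)+u'(1)$, obtain $\det(I-A)=-r_{10}'[\lambda](1)$, and collapse formula \eqref{e-formula} using $G_{M_{1}}[\lambda](1,s)=0$; this mirrors the paper's explicit computation for the analogous $G_{M_{2}}$--$G_{D}$ case, whose proof the paper simply says to repeat here. The final translation $r_{10}[\lambda](t)=-\frac{\partial}{\partial s}G_{M_{1}}[\lambda](t,1)$, $r_{10}'[\lambda](1)=-\frac{\partial^{2}}{\partial s\partial t}G_{M_{1}}[\lambda](1,1)$ is also handled correctly.
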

\begin{theorem}
	Assume that $L[\lambda]$ is nonresonant in $X_{M_{2}}$ and $r_{7}[\lambda](0)\neq 0$, then it holds that 
\begin{equation*}
\begin{aligned}
G_{N}[\lambda](t,s)=&G_{M_{2}}[\lambda](t,s)-\frac{r_{7}[\lambda](t)}{r'_{7}[\lambda](0)}\, \frac{\partial}{\partial t}G_{M_{2}}[\lambda](0,s)\\
=&G_{M_{2}}[\lambda](t,s)-\dfrac{\frac{\partial }{\partial s}G_{M_{2}}[\lambda](t,0)}{\frac{\partial^{2}}{\partial s \partial t}G_{M_{2}}[\lambda](0,0)}\, \frac{\partial}{\partial t}G_{M_{2}}[\lambda](0,s),\;\; \forall (t,s)\in I\times I.
\end{aligned}
\end{equation*}
\end{theorem}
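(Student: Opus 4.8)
The plan is to obtain this identity as a direct application of Theorem~\ref{th_Ci}, taking the Mixed~2 problem as the base problem (so that $g=G_{M_2}[\lambda]$, $\omega_1=r_7[\lambda]$ and $\omega_2=r_8[\lambda]$) and writing the Neumann conditions as a perturbation of the Mixed~2 ones, exactly in the spirit of the proof built around equation \eqref{problema1}. The Mixed~2 conditions are $B_1(u)=u(0)=0$ and $B_2(u)=u'(1)=0$, while the Neumann conditions are $u'(0)=0$ and $u'(1)=0$. Since the second Neumann condition already coincides with $B_2(u)=0$, I would leave it untouched by setting $\delta_2=0$, and I would rewrite the first Neumann condition $u'(0)=0$ as $u(0)=u(0)+u'(0)$, so that the Neumann problem reads
\begin{equation*}
L[\lambda]\,u(t)=\sigma(t),\;\;\text{a.e.}\;\;t\in I,\;\;u(0)=u(0)+u'(0),\;\;u'(1)=0.
\end{equation*}
In the notation of Theorem~\ref{th_Ci} this corresponds to $C_1(u)=u(0)+u'(0)$, $\delta_1=1$ and $\delta_2=0$ (so the choice of $C_2$ is irrelevant).

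Next I would assemble the matrix $A=(a_{ij})$, $a_{ij}=\delta_j\,C_i(\omega_j)$. Because $\delta_2=0$, its entire second column vanishes, and since $r_7[\lambda](0)=1$ by the very definition of $r_7[\lambda]$, the only nontrivial entry is $a_{11}=C_1(r_7[\lambda])=r_7[\lambda](0)+r_7'[\lambda](0)=1+r_7'[\lambda](0)$. Hence
\begin{equation*}
I-A=\begin{pmatrix} -r_7'[\lambda](0) & 0\\ 0 & 1\end{pmatrix},\qquad \det(I-A)=-r_7'[\lambda](0),
\end{equation*}
so the nonresonance requirement $\det(I-A)\neq0$ reads $r_7'[\lambda](0)\neq0$, which is the quantity that actually enters the final expression (the printed hypothesis $r_7[\lambda](0)\neq0$ is automatically satisfied since $r_7[\lambda](0)=1$). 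Inverting gives $b_{11}=-1/r_7'[\lambda](0)$, $b_{22}=1$ and vanishing off-diagonal entries, so formula \eqref{e-formula} collapses to the single surviving term $i=j=1$,
\begin{equation*}
G_N[\lambda](t,s)=G_{M_2}[\lambda](t,s)-\frac{r_7[\lambda](t)}{r_7'[\lambda](0)}\,C_1\bigl(G_{M_2}[\lambda](\cdot,s)\bigr).
\end{equation*}

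Finally I would simplify $C_1\bigl(G_{M_2}[\lambda](\cdot,s)\bigr)=G_{M_2}[\lambda](0,s)+\frac{\partial}{\partial t}G_{M_2}[\lambda](0,s)$, using that $G_{M_2}[\lambda](0,s)=0$ because the Mixed~2 Green's function satisfies the boundary condition $u(0)=0$; this yields the first equality of the statement. The second equality then follows by inserting the identities from the table of auxiliary functions, namely $r_7[\lambda](t)=-\frac{\partial}{\partial s}G_{M_2}[\lambda](t,0)$ and, differentiating in $t$ and evaluating at $t=0$, $r_7'[\lambda](0)=-\frac{\partial^2}{\partial s\partial t}G_{M_2}[\lambda](0,0)$, so that the two minus signs cancel. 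The computation is entirely mechanical; the only points requiring care are the correct translation of the Neumann conditions into the pair $(C_i,\delta_i)$ and the bookkeeping of $A$ — in particular observing that $\delta_2=0$ annihilates a whole column and that $r_7[\lambda](0)=1$ — which is where a sign or index slip would most easily occur.
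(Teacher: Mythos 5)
Your proposal is correct and follows essentially the same route as the paper: the omitted proof is precisely an application of Theorem~\ref{th_Ci} with $g=G_{M_2}[\lambda]$, $\omega_1=r_7[\lambda]$, $\omega_2=r_8[\lambda]$ and the Neumann conditions rewritten as $u(0)=u(0)+u'(0)$, $u'(1)=0$, exactly as in the worked example around \eqref{problema1}. Your observation that the operative nondegeneracy condition is $r_7'[\lambda](0)\neq 0$ (the stated $r_7[\lambda](0)\neq 0$ being vacuous since $r_7[\lambda](0)=1$) correctly identifies a slip in the theorem's hypothesis, and the rest of the bookkeeping, including $G_{M_2}[\lambda](0,s)=0$ and the sign cancellation via $r_7[\lambda](t)=-\frac{\partial}{\partial s}G_{M_2}[\lambda](t,0)$, checks out.
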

\subsection{Periodic and Neumann problems}
In this section we look for a relationship between the Green's functions $G_{P}[\lambda]$ and $G_{N}[\lambda]$ following the same steps than in previous sections.
\begin{theorem}
	Assume that $L[\lambda]$ is nonresonant in $X_{P}$ and $r'_{3}[\lambda](1)\neq 0$, then it holds that 
\begin{equation*}
\begin{aligned}
G_{N}[\lambda](t,s)=&G_{P}[\lambda](t,s)-\frac{r_{3}[\lambda](t)}{r'_{3}[\lambda](1)}\, \frac{\partial }{\partial t} G_{P}[\lambda](1,s)\\
=&G_{P}[\lambda](t,s)-\dfrac{\frac{\partial }{\partial s}G_{P}[\lambda](t,0)}{\frac{\partial^{2} }{\partial s\partial t}G_{P}[\lambda](1,0)}\, \frac{\partial }{\partial t} G_{P}[\lambda](1,s),\;\; \forall (t,s)\in I\times I.
\end{aligned}
\end{equation*}
\end{theorem}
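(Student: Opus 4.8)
The plan is to apply Theorem~\ref{th_Ci} with the Dirichlet problem playing no role here; instead I would use the Periodic problem as the base problem whose Green's function $G_{P}[\lambda]$ is already known, and rewrite the Neumann boundary conditions as a perturbation of the periodic ones. Concretely, I would first express the Neumann conditions $u'(0)=0$, $u'(1)=0$ in terms of the periodic boundary operators $B_1(u)=u(0)-u(1)$ and $B_2(u)=u'(0)-u'(1)$, for which $\omega_1=r_3[\lambda]$ and $\omega_2=r_4[\lambda]$ are the associated auxiliary functions introduced in \eqref{39}--\eqref{19}. The key observation is that $u'(0)=0$ can be read off from the periodic framework by an appropriate linear combination, so that the Neumann problem takes the form \eqref{e-linear-delta} with suitable operators $C_1, C_2$ and parameters $\delta_1,\delta_2$.

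The central computation is to set up the matrix $A=(a_{ij})$ with $a_{ij}=\delta_j\,C_i(\omega_j)$ and verify that $\det(I-A)\neq 0$ reduces exactly to the stated condition $r'_3[\lambda](1)\neq 0$, which plays the role of $|I-A_{N}^{1}[\lambda]|$ in the parallel Neumann--Dirichlet theorem. Since the problem is effectively reduced to a single nontrivial boundary condition (only one of the Neumann conditions needs to be enforced as a perturbation of periodicity, the other being absorbed), I expect the matrix $A$ to be lower or upper triangular, so that $\det(I-A)$ collapses to a single entry proportional to $r'_3[\lambda](1)$. Once this is in hand, the inverse matrix $B=(I-A)^{-1}$ is immediate, and I would substitute into formula \eqref{e-formula} to obtain the first line of the claimed identity, with the coefficient $-r_3[\lambda](t)/r'_3[\lambda](1)$ multiplying $C_2(g(\cdot,s))=\frac{\partial}{\partial t}G_{P}[\lambda](1,s)$.

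To pass from the first equality to the second, I would invoke the identifications established in the Preliminaries: $r_3[\lambda](t)=-\frac{\partial}{\partial s}G_{P}[\lambda](t,0)$, so that $r'_3[\lambda](1)=-\frac{\partial^2}{\partial s\,\partial t}G_{P}[\lambda](1,0)$. Substituting these two expressions into the ratio $r_3[\lambda](t)/r'_3[\lambda](1)$ turns it into $\frac{\partial}{\partial s}G_{P}[\lambda](t,0)\big/\frac{\partial^2}{\partial s\,\partial t}G_{P}[\lambda](1,0)$, which is exactly the factor appearing in the second line of the statement; here the two minus signs cancel, leaving the sign consistent with the displayed formula.

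The main obstacle will be the bookkeeping in translating the Neumann conditions into the periodic boundary operators and correctly identifying which auxiliary functions $r_j[\lambda]$ and which evaluations $C_j(g(\cdot,s))$ arise, since the periodic operators $B_1, B_2$ mix values and derivatives at both endpoints. In particular one must be careful that $C_2(g(\cdot,s))$ indeed evaluates to $\frac{\partial}{\partial t}G_{P}[\lambda](1,s)$ and that no stray boundary terms survive; verifying this requires using the continuity and jump conditions from Definition~\ref{d-Green-Function} together with the symmetry relation $\frac{\partial}{\partial t}G_{P}[\lambda](t,s)=\frac{\partial}{\partial s}G_{P}[\lambda](s,t)$ for the self-adjoint periodic problem. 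Apart from this careful identification, the rest is a routine application of Theorem~\ref{th_Ci} exactly as in the preceding Neumann--Mixed and Periodic--Dirichlet cases.
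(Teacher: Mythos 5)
Your proposal is correct and follows exactly the route the paper intends for this theorem: apply Theorem~\ref{th_Ci} with the periodic problem as base ($\omega_1=r_3[\lambda]$, $\omega_2=r_4[\lambda]$), enforce only the one non-redundant Neumann condition as a perturbation so that $\det(I-A)=-r'_3[\lambda](1)$, use $G_P[\lambda](0,s)=G_P[\lambda](1,s)$ to reduce $C_j(G_P[\lambda](\cdot,s))$ to $\frac{\partial}{\partial t}G_P[\lambda](1,s)$, and then substitute $r_3[\lambda](t)=-\frac{\partial}{\partial s}G_P[\lambda](t,0)$ for the second equality. The only cosmetic remark is that the last reduction needs just the periodic boundary conditions satisfied by $G_P[\lambda](\cdot,s)$ from Definition~\ref{d-Green-Function}, not the symmetry of $G_P[\lambda]$.
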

Finally doing the reverse process by studying the Periodic problem as a function of the Neumann one, 
we deduce the following result. 
\begin{theorem}
If operator $L[\lambda]$ is nonresonant in $X_{N}$ and $$|I-A_{N}^{2}[\lambda]|:=r_{5}[\lambda](1)-r_{5}[\lambda](0)+r_{6}[\lambda](1)-r_{6}[\lambda](0)\neq 0,$$ then the next equality is fulfilled
\begin{equation*}
\footnotesize
\begin{aligned}
G_{P}[\lambda](t,s)=&G_{N}[\lambda](t,s)+\frac{1}{|I-A_{N}^{2}[\lambda]|}\, \Big(r_{5}[\lambda](t)+r_{6}[\lambda](t)\Big)\,\Big(G_{N}[\lambda](0,s)-G_{N}[\lambda](1,s)\Big)\\
=&G_{N}[\lambda](t,s)+\frac{1}{|I-A_{N}^{2}[\lambda]|}\, \Big(G_{N}[\lambda](t,0)-G_{N}[\lambda](t,1)\Big)\,\Big(G_{N}[\lambda](0,s)-G_{N}[\lambda](1,s)\Big),\;\; \forall (t,s) \in I\times I.
\end{aligned}
\end{equation*}
\end{theorem}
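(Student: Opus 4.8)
The plan is to apply Theorem~\ref{th_Ci} taking the Neumann problem as the base problem, so that $g=G_{N}[\lambda]$, $\omega_{1}=r_{5}[\lambda]$ and $\omega_{2}=r_{6}[\lambda]$ (recall that $r_{5}[\lambda]$ and $r_{6}[\lambda]$ solve $L[\lambda]u=0$ with $u'(0)=1,\,u'(1)=0$ and $u'(0)=0,\,u'(1)=1$ respectively, and that the Neumann operators are $B_{1}(u)=u'(0)$, $B_{2}(u)=u'(1)$). The first step is to rewrite the periodic conditions $u(0)=u(1)$, $u'(0)=u'(1)$ in the form required by \eqref{e-linear-delta}. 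I would write the periodic problem as
\begin{equation*}
L[\lambda]\,u(t)=\sigma(t),\;\;\text{a.e.}\;\; t\in I,\quad u'(0)=u'(1),\quad u'(1)=u'(1)+u(0)-u(1),
\end{equation*}
so that $\delta_{1}=\delta_{2}=1$, $C_{1}(u)=u'(1)$ and $C_{2}(u)=u'(1)+u(0)-u(1)$. The first identity is exactly $u'(0)=u'(1)$, while the second one forces $u(0)=u(1)$; hence this system is equivalent to the periodic one.

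Next I would compute the matrix $A_{N}^{2}[\lambda]=(a_{ij})$ with $a_{ij}=\delta_{j}\,C_{i}(\omega_{j})=C_{i}(\omega_{j})$. Using $r_{5}'[\lambda](0)=1$, $r_{5}'[\lambda](1)=0$, $r_{6}'[\lambda](0)=0$, $r_{6}'[\lambda](1)=1$, a direct evaluation gives $a_{11}=r_{5}'[\lambda](1)=0$, $a_{12}=r_{6}'[\lambda](1)=1$, $a_{21}=r_{5}[\lambda](0)-r_{5}[\lambda](1)$ and $a_{22}=1+r_{6}[\lambda](0)-r_{6}[\lambda](1)$. Therefore
\begin{equation*}
I-A_{N}^{2}[\lambda]=\begin{pmatrix} 1 & -1 \\ r_{5}[\lambda](1)-r_{5}[\lambda](0) & r_{6}[\lambda](1)-r_{6}[\lambda](0) \end{pmatrix},
\end{equation*}
whose determinant is exactly $|I-A_{N}^{2}[\lambda]|=r_{5}[\lambda](1)-r_{5}[\lambda](0)+r_{6}[\lambda](1)-r_{6}[\lambda](0)$, so the hypothesis guarantees invertibility and Theorem~\ref{th_Ci} applies. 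Inverting this $2\times 2$ matrix one finds, crucially, that the second column of $B=(I-A_{N}^{2}[\lambda])^{-1}$ has both entries equal to $1/|I-A_{N}^{2}[\lambda]|$, that is $b_{12}=b_{22}=1/|I-A_{N}^{2}[\lambda]|$.

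The key simplification comes from Definition~\ref{d-Green-Function}: since $G_{N}[\lambda](\cdot,s)$ satisfies the Neumann boundary conditions, $\frac{\partial}{\partial t}G_{N}[\lambda](1,s)=0$ for every $s\in(0,1)$. Consequently $C_{1}(g(\cdot,s))=\frac{\partial}{\partial t}G_{N}[\lambda](1,s)=0$, whereas $C_{2}(g(\cdot,s))=\frac{\partial}{\partial t}G_{N}[\lambda](1,s)+G_{N}[\lambda](0,s)-G_{N}[\lambda](1,s)=G_{N}[\lambda](0,s)-G_{N}[\lambda](1,s)$. Substituting into formula \eqref{e-formula}, every term with $j=1$ vanishes and only the $j=2$ terms survive, yielding
\begin{equation*}
G_{P}[\lambda](t,s)=G_{N}[\lambda](t,s)+\big(b_{12}\,r_{5}[\lambda](t)+b_{22}\,r_{6}[\lambda](t)\big)\big(G_{N}[\lambda](0,s)-G_{N}[\lambda](1,s)\big).
\end{equation*}
Since $b_{12}=b_{22}=1/|I-A_{N}^{2}[\lambda]|$, this is precisely the first claimed equality, and the second one follows from the identities $r_{5}[\lambda](t)=G_{N}[\lambda](t,0)$ and $r_{6}[\lambda](t)=-G_{N}[\lambda](t,1)$ recorded in the Preliminaries, which give $r_{5}[\lambda](t)+r_{6}[\lambda](t)=G_{N}[\lambda](t,0)-G_{N}[\lambda](t,1)$.

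The main obstacle is the very first step: choosing the correct reformulation of the periodic conditions in terms of the Neumann operators $u'(0),\,u'(1)$, since the periodic conditions mix function values and derivatives whereas the Neumann operators see only derivatives. Once this reformulation is fixed, the remaining computations are routine, and the decisive point that keeps the final expression free of derivatives of $G_{N}[\lambda]$ is the vanishing of $\frac{\partial}{\partial t}G_{N}[\lambda](1,s)$, which eliminates the $C_{1}$ contributions and collapses the double sum in \eqref{e-formula}.
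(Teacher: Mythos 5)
Your proposal is correct and follows exactly the method the paper intends for this result: rewriting the periodic conditions in the form $B_i(u)=\delta_i C_i(u)$ with respect to the Neumann operators, computing $A_N^2[\lambda]$ and $(I-A_N^2[\lambda])^{-1}$, and applying formula \eqref{e-formula}, with the vanishing of $\frac{\partial}{\partial t}G_N[\lambda](1,s)$ collapsing the sum. The matrix computation, the determinant, the identification $b_{12}=b_{22}=1/|I-A_N^2[\lambda]|$, and the final substitution $r_5[\lambda](t)+r_6[\lambda](t)=G_N[\lambda](t,0)-G_N[\lambda](t,1)$ all check out.
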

\subsection{Periodic and Mixed problems}
The same arguments of the previous subsections are applicable to the Periodic and Mixed problems.
\begin{theorem}
	If operator $L[\lambda]$ is nonresonant in $X_{P}$ and $$|I-A_{P}^{2}[\lambda]|:=\Big(1-r_{3}[\lambda](0)\Big)\Big(1+r'_{4}[\lambda](1)\Big)+r_{4}[\lambda](0)\, r'_{3}[\lambda](1)\neq 0,$$ then the next equality is fulfilled
\begin{equation*}
\small
\begin{aligned}
G_{M_{1}}[\lambda](t,s)=&G_{P}[\lambda](t,s)+\frac{r_{3}[\lambda](t)}{|I-A_{P}^{2}[\lambda]|}\, \left( \Big(1+r'_{4}[\lambda](1)\Big)\, G_{P}[\lambda](0,s)-r_{4}[\lambda](0)\, \frac{\partial}{\partial t} G_{P}[\lambda](1,s) \right)\\
&-\frac{r_{4}[\lambda](t)}{|I-A_{P}^{2}[\lambda]|}\, \left(r'_{3}[\lambda](1)\, G_{P}[\lambda](0,s)+\Big(1-r_{3}[\lambda](0)\Big)\, \frac{\partial}{\partial t} G_{P}[\lambda](1,s)\right)\\
=&G_{P}[\lambda](t,s)-\frac{1}{|I-A_{P}^{2}[\lambda]|}\, \left(1+\frac{\partial }{\partial t} G_{P}[\lambda](1,0)\right) \,\frac{\partial }{\partial s}G_{P}[\lambda](t,0)\, G_{P}[\lambda](0,s)\\
&+\frac{G_{P}[\lambda](0,0)}{|I-A_{P}^{2}[\lambda]|}\, \frac{\partial }{\partial s}G_{P}[\lambda](t,0)\, \frac{\partial}{\partial t} G_{P}[\lambda](1,s)\\
&+\frac{1}{|I-A_{P}^{2}[\lambda]|}\, \frac{\partial^{2} }{\partial s\partial t} G_{P}(1,0)\, G_{P}[\lambda](t,0)\, G_{P}[\lambda](0,s)\\
&-\frac{1}{|I-A_{P}^{2}[\lambda]|}\,\left(1+\frac{\partial }{\partial s} G_{P}[\lambda](0,0)\right)\, G_{P}[\lambda](t,0)\, \frac{\partial}{\partial t} G_{P}[\lambda](1,s),\;\; \forall (t,s) \in I\times I.
\end{aligned}
\end{equation*}
\end{theorem}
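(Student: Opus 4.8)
The plan is to apply Theorem \ref{th_Ci} taking the periodic problem \eqref{e-periodic} as the base problem, so that $g=G_{P}[\lambda]$ and the auxiliary solutions $\omega_1,\omega_2$ of the homogeneous equation subject to the periodic boundary operators $B_1(u)=u(0)-u(1)$ and $B_2(u)=u'(0)-u'(1)$ are precisely $\omega_1=r_{3}[\lambda]$ and $\omega_2=r_{4}[\lambda]$. Both functions exist and are unique because $L[\lambda]$ is nonresonant in $X_{P}$, by Lemma \ref{e-sol-hi}.

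First I would rewrite the Mixed 1 conditions $u'(0)=0$, $u(1)=0$ as a nonlocal perturbation of the periodic ones. Choosing $C_1(u)=u(0)$, $C_2(u)=-u'(1)$ and $\delta_1=\delta_2=1$, the conditions $B_1(u)=\delta_1\,C_1(u)$ and $B_2(u)=\delta_2\,C_2(u)$ read $u(0)-u(1)=u(0)$ and $u'(0)-u'(1)=-u'(1)$, which are equivalent to $u(1)=0$ and $u'(0)=0$, that is, to the Mixed 1 conditions \eqref{e-Mixed-1}. This places the problem in the framework of \eqref{e-linear-delta}.

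Next I would compute the matrix $A=(\delta_j\,C_i(\omega_j))$. Since $C_1(r_{3}[\lambda])=r_{3}[\lambda](0)$, $C_1(r_{4}[\lambda])=r_{4}[\lambda](0)$, $C_2(r_{3}[\lambda])=-r'_{3}[\lambda](1)$ and $C_2(r_{4}[\lambda])=-r'_{4}[\lambda](1)$, a direct evaluation gives $\det(I-A)=(1-r_{3}[\lambda](0))(1+r'_{4}[\lambda](1))+r_{4}[\lambda](0)\,r'_{3}[\lambda](1)=|I-A_{P}^{2}[\lambda]|$, which is nonzero by hypothesis, so Theorem \ref{th_Ci} applies. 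Inverting the $2\times 2$ matrix $I-A$ and evaluating the functionals on the base Green's function, namely $C_1(G_{P}[\lambda](\cdot,s))=G_{P}[\lambda](0,s)$ and $C_2(G_{P}[\lambda](\cdot,s))=-\frac{\partial}{\partial t}G_{P}[\lambda](1,s)$, formula \eqref{e-formula} yields, after grouping the coefficients of $r_{3}[\lambda](t)$ and $r_{4}[\lambda](t)$, the first displayed identity of the statement.

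Finally, the second displayed form follows by substituting the preliminary identities $r_{3}[\lambda](t)=-\frac{\partial}{\partial s}G_{P}[\lambda](t,0)$ and $r_{4}[\lambda](t)=G_{P}[\lambda](t,0)$, together with their boundary evaluations $r_{4}[\lambda](0)=G_{P}[\lambda](0,0)$, $r'_{4}[\lambda](1)=\frac{\partial}{\partial t}G_{P}[\lambda](1,0)$, $r_{3}[\lambda](0)=-\frac{\partial}{\partial s}G_{P}[\lambda](0,0)$ and $r'_{3}[\lambda](1)=-\frac{\partial^{2}}{\partial s\partial t}G_{P}[\lambda](1,0)$. The hard part will be purely the bookkeeping: correctly inverting $I-A$ and tracking the signs introduced by the choice $C_2(u)=-u'(1)$ and by the mixed-derivative identities, so that the four cross terms land with the exact signs shown. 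No genuine analytic difficulty arises beyond this, since existence and uniqueness of the Mixed 1 solution are already guaranteed by Theorem \ref{th_Ci} once $|I-A_{P}^{2}[\lambda]|\neq 0$.
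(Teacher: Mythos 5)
Your proposal is correct and follows exactly the route the paper intends for this result: Theorem \ref{th_Ci} with the periodic problem as base, $\omega_1=r_3[\lambda]$, $\omega_2=r_4[\lambda]$, $C_1(u)=u(0)$, $C_2(u)=-u'(1)$, and your matrix $I-A$, its determinant, and the final substitutions $r_3[\lambda](t)=-\frac{\partial}{\partial s}G_P[\lambda](t,0)$, $r_4[\lambda](t)=G_P[\lambda](t,0)$ all check out against the stated identity. The paper omits the computation (referring to "the same arguments of the previous subsections"), so your write-up simply supplies the details of that same argument.
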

Similarly, we do a similar study of Mixed 2 problem  as a function of the Periodic problem.
\begin{theorem}
	If operator $L[\lambda]$ is nonresonant in $X_{P}$ and $$|I-A_{P}^{3}[\lambda]|=\Big(1+r_{3}[\lambda](1)\Big)\Big(1-r'_{4}[\lambda](0)\Big)+r'_{3}[\lambda](0)\, r_{4}[\lambda](1)\neq 0,$$ then the next equality is fulfilled
\begin{equation*}
\begin{aligned}
G_{M_{2}}[\lambda](t,s)=&G_{P}[\lambda](t,s)-\frac{r_{3}[\lambda](t)}{|I-A_{P}^{3}[\lambda]|}\, \left(\Big(1-r'_{4}[\lambda](0)\Big)\, G_{P}[\lambda](1,s)+r_{4}[\lambda](1)\, \frac{\partial}{\partial t} G_{P}[\lambda](0,s) \right)\\
&-\frac{r_{4}[\lambda](t)}{|I-A_{P}^{3}[\lambda]|}\,\left(r'_{3}[\lambda](0)\, G_{P}[\lambda](1,s)-\Big(1+r_{3}[\lambda](1)\Big)\, \frac{\partial}{\partial t} G_{P}[\lambda](0,s)\right)\\
=&G_{P}[\lambda](t,s)+\frac{1}{|I-A_{P}^{3}[\lambda]|}\,\left(1-\frac{\partial }{\partial t}G_{P}[\lambda](0,0)\right)\, \frac{\partial}{\partial s}G_{P}[\lambda](t,0)\, G_{P}[\lambda](1,s)\\
&+\frac{G_{P}[\lambda](1,0)}{|I-A_{P}^{3}[\lambda]|}\, \frac{\partial}{\partial s}G_{P}[\lambda](t,0)\, \frac{\partial}{\partial t} G_{P}[\lambda](0,s)\\
&+\frac{1}{|I-A_{P}^{3}[\lambda]|}\, \frac{\partial^{2}}{\partial s\partial t} G_{P}[\lambda](0,0)\,G_{P}[\lambda](t,0)\, G_{P}[\lambda](1,s)\\
&+\frac{1}{|I-A_{P}^{3}[\lambda]|}\,\left(1-\frac{\partial }{\partial s}G_{P}[\lambda](1,0)\right) G_{P}[\lambda](t,0)\, \frac{\partial}{\partial t} G_{P}[\lambda](0,s),\;\; \forall (t,s) \in I\times I.
\end{aligned}
\end{equation*}
\end{theorem}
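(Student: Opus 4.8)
The plan is to invoke Theorem~\ref{th_Ci} with the periodic problem as the base problem, so that $g=G_{P}[\lambda]$ and, in the notation of that theorem, $\omega_{1}=r_{3}[\lambda]$, $\omega_{2}=r_{4}[\lambda]$ are the solutions attached to the periodic operators $B_{1}(u)=u(0)-u(1)$ and $B_{2}(u)=u'(0)-u'(1)$ (see \eqref{39} and \eqref{19}). First I would rewrite the Mixed~2 conditions $u(0)=0$, $u'(1)=0$ in terms of these operators,
\[
u(0)-u(1)=-u(1),\qquad u'(0)-u'(1)=u'(0),
\]
which identifies $C_{1}(u)=-u(1)$, $C_{2}(u)=u'(0)$ and $\delta_{1}=\delta_{2}=1$.

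Then I would form the matrix $A=(a_{ij})$ with $a_{ij}=\delta_{j}\,C_{i}(\omega_{j})$, namely
\[
A=\begin{pmatrix} -r_{3}[\lambda](1) & -r_{4}[\lambda](1)\\ r'_{3}[\lambda](0) & r'_{4}[\lambda](0)\end{pmatrix},
\]
so that $\det(I-A)=\bigl(1+r_{3}[\lambda](1)\bigr)\bigl(1-r'_{4}[\lambda](0)\bigr)+r'_{3}[\lambda](0)\,r_{4}[\lambda](1)=|I-A_{P}^{3}[\lambda]|$, which is nonzero by hypothesis. Inverting $I-A$ gives $B=(b_{ij})$, and substituting into \eqref{e-formula}, using $C_{1}(g(\cdot,s))=-G_{P}[\lambda](1,s)$ and $C_{2}(g(\cdot,s))=\frac{\partial}{\partial t}G_{P}[\lambda](0,s)$, yields the first displayed equality once the $r_{3}[\lambda](t)$ and $r_{4}[\lambda](t)$ terms are grouped.

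The second equality then follows by inserting the already-established identities $r_{3}[\lambda](t)=-\frac{\partial}{\partial s}G_{P}[\lambda](t,0)$ and $r_{4}[\lambda](t)=G_{P}[\lambda](t,0)$, together with the endpoint values of their derivatives such as $r'_{4}[\lambda](0)=\frac{\partial}{\partial t}G_{P}[\lambda](0,0)$ and $r'_{3}[\lambda](0)=-\frac{\partial^{2}}{\partial s\partial t}G_{P}[\lambda](0,0)$, into the first equality. I expect the main obstacle to be purely organizational rather than conceptual: one must pick $C_{1},C_{2}$ so that the rewritten conditions reproduce the Mixed~2 problem exactly, and then track every sign while evaluating the functionals $C_{j}$ on $\omega_{j}$ and on $g(\cdot,s)$, since a single sign slip contaminates both $\det(I-A)$ and the inverse $B$. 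The nonresonance of $L[\lambda]$ in $X_{P}$ ensures that $g=G_{P}[\lambda]$ is well defined, and the hypothesis $|I-A_{P}^{3}[\lambda]|\neq0$ is exactly the condition $\det(I-A)\neq0$ demanded by Theorem~\ref{th_Ci}.
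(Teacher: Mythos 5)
Your proposal is correct and follows exactly the method the paper uses throughout Section 4 (and leaves implicit for this particular theorem): rewrite the Mixed~2 conditions as $u(0)-u(1)=-u(1)$, $u'(0)-u'(1)=u'(0)$, apply Theorem~\ref{th_Ci} with $g=G_{P}[\lambda]$, $\omega_{1}=r_{3}[\lambda]$, $\omega_{2}=r_{4}[\lambda]$, and invert $I-A$. Your matrix $A$, the determinant, and the final substitutions $r_{3}[\lambda](t)=-\frac{\partial}{\partial s}G_{P}[\lambda](t,0)$, $r_{4}[\lambda](t)=G_{P}[\lambda](t,0)$ all check out against both displayed equalities.
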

Now we will do the process backwards by writing the Periodic problem based on the Mixed problems. 
Performing the calculations analogously to the previous subsections we deduce the next theorems. 
\begin{theorem}
Assume that $L[\lambda]$ is nonresonant in $X_{M_{2}}$ and $$|I-A_{M_{2}}|:=\Big(1-r_{7}[\lambda](1)\Big)\Big(1-r'_{8}(0)\Big)-r_{8}[\lambda](1)\,r'_{7}[\lambda](0)\neq 0,$$ then
\begin{equation*}
\begin{aligned}
G_{P}[\lambda](t,s)=&G_{M_{2}}[\lambda](t,s)+\frac{1}{|I-A_{M_{2}}|} \Big( \Big(1-r'_{8}[\lambda](0) \Big)\,r_{7}[\lambda](t)\, G_{M_{2}}[\lambda](1,s)\Big. \\
&+r_{8}[\lambda](1)\,r_{7}[\lambda](t) \frac{\partial}{\partial t}G_{M_{2}}[\lambda](0,s)+r'_{7}[\lambda](0)\,r_{8}[\lambda](t)\,G_{M_{2}}(1,s)\\
&\left.+\Big(1-r_{7}[\lambda](1)\Big)\,r_{8}[\lambda](t)\, \frac{\partial }{\partial t}G_{M_{2}}[\lambda](0,s)\right)\\
=&G_{M_{2}}[\lambda](t,s)+\frac{1}{|I-A_{M_{2}}|}\left( -\left(1+\frac{\partial }{\partial t}G_{M_{2}}[\lambda](0,1)\right)\,\frac{\partial }{\partial s}G_{M_{2}}[\lambda](t,0)\, G_{M_{2}}[\lambda](1,s) \right.\\
&+G_{M_{2}}[\lambda](1,1)\,\frac{\partial}{\partial s}G_{M_{2}}[\lambda](t,0) \frac{\partial}{\partial t}G_{M_{2}}[\lambda](0,s)\\
&+\frac{\partial^{2}}{\partial s\partial t}G_{M_{2}}[\lambda](0,0)\,G_{M_{2}}[\lambda](t,1)\,G_{M_{2}}(1,s)\\
&\left.-\left(1+\frac{\partial }{\partial s}G_{M_{2}}[\lambda](1,0)\right)\,G_{M_{2}}[\lambda](t,1)\, \frac{\partial }{\partial t}G_{M_{2}}[\lambda](0,s)\right),\;\; \forall (t,s) \in I\times I.
\end{aligned}
\end{equation*}
\end{theorem}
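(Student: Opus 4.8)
The plan is to apply Theorem \ref{th_Ci} with the ``base'' problem taken to be the Mixed 2 problem, whose Green's function is $g=G_{M_{2}}[\lambda]$ and whose boundary operators are $B_1(u)=u(0)$ and $B_2(u)=u'(1)$. First I would rewrite the periodic conditions $u(0)=u(1)$, $u'(0)=u'(1)$ so that their left-hand sides become exactly $B_1(u)$ and $B_2(u)$: writing them as $u(0)=u(1)$ and $u'(1)=u'(0)$ identifies $C_1(u)=u(1)$, $C_2(u)=u'(0)$ and $\delta_1=\delta_2=1$. With this identification the functions appearing in Lemma \ref{e-sol-hi} are $\omega_1=r_{7}[\lambda]$ (the solution of $L[\lambda]u=0$, $u(0)=1$, $u'(1)=0$) and $\omega_2=r_{8}[\lambda]$ (the solution of $L[\lambda]u=0$, $u(0)=0$, $u'(1)=1$), both of which exist and are unique precisely because $L[\lambda]$ is nonresonant in $X_{M_{2}}$.

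Next I would compute the matrix $A=(a_{ij})$ with $a_{ij}=\delta_j\,C_i(\omega_j)=C_i(\omega_j)$, which gives
\begin{equation*}
A_{M_{2}}=\begin{pmatrix} r_{7}[\lambda](1) & r_{8}[\lambda](1)\\ r'_{7}[\lambda](0) & r'_{8}[\lambda](0)\end{pmatrix},
\end{equation*}
so that $|I-A_{M_{2}}|=\bigl(1-r_{7}[\lambda](1)\bigr)\bigl(1-r'_{8}[\lambda](0)\bigr)-r_{8}[\lambda](1)\,r'_{7}[\lambda](0)$, which is exactly the quantity assumed nonzero in the statement. This nonvanishing is the hypothesis $\det(I-A)\neq 0$ of Theorem \ref{th_Ci}, so that result applies and $B=(I-A_{M_{2}})^{-1}$ is the standard $2\times2$ inverse, with entries $b_{11}=(1-r'_{8}[\lambda](0))/|I-A_{M_{2}}|$, $b_{12}=r_{8}[\lambda](1)/|I-A_{M_{2}}|$, $b_{21}=r'_{7}[\lambda](0)/|I-A_{M_{2}}|$ and $b_{22}=(1-r_{7}[\lambda](1))/|I-A_{M_{2}}|$.

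Then I would substitute directly into formula \eqref{e-formula}, using $C_1(g(\cdot,s))=G_{M_{2}}[\lambda](1,s)$ and $C_2(g(\cdot,s))=\frac{\partial}{\partial t}G_{M_{2}}[\lambda](0,s)$. Expanding the double sum over the four pairs $(i,j)$ and grouping the $r_{7}[\lambda](t)$ and $r_{8}[\lambda](t)$ contributions yields the first displayed equality. To obtain the second equality I would replace $r_{7}[\lambda]$ and $r_{8}[\lambda]$, together with their endpoint values and derivatives, by the Green's function expressions recorded after the definition of the $r_i$, namely $r_{7}[\lambda](t)=-\frac{\partial}{\partial s}G_{M_{2}}[\lambda](t,0)$ and $r_{8}[\lambda](t)=-G_{M_{2}}[\lambda](t,1)$; differentiating at the endpoints gives $r'_{7}[\lambda](0)=-\frac{\partial^{2}}{\partial s\partial t}G_{M_{2}}[\lambda](0,0)$ and $r'_{8}[\lambda](0)=-\frac{\partial}{\partial t}G_{M_{2}}[\lambda](0,1)$, while the constants rewrite as $1-r_{7}[\lambda](1)=1+\frac{\partial}{\partial s}G_{M_{2}}[\lambda](1,0)$ and $1-r'_{8}[\lambda](0)=1+\frac{\partial}{\partial t}G_{M_{2}}[\lambda](0,1)$.

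The routine part is the $2\times2$ inversion and the expansion of the sum; the step needing the most care is the sign bookkeeping in the final substitution, since each of $r_{7}[\lambda]$ and $r_{8}[\lambda]$ carries a minus sign and so do their derivatives, and one must verify that the four products combine with exactly the signs claimed in the statement. A preliminary point worth checking explicitly is that nonresonance of $L[\lambda]$ in $X_{M_{2}}$ guarantees that all entries of $A_{M_{2}}$ are well defined, so that the determinant hypothesis can be invoked meaningfully.
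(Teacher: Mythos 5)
Your proposal is correct and follows exactly the route the paper intends for this result: apply Theorem \ref{th_Ci} with the Mixed 2 problem as the base, identify $C_1(u)=u(1)$, $C_2(u)=u'(0)$, $\omega_1=r_7[\lambda]$, $\omega_2=r_8[\lambda]$, invert $I-A_{M_2}$, and then translate via $r_7[\lambda](t)=-\frac{\partial}{\partial s}G_{M_2}[\lambda](t,0)$ and $r_8[\lambda](t)=-G_{M_2}[\lambda](t,1)$; your matrix, its determinant, and the sign bookkeeping in all four terms check out against the stated formula.
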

\begin{theorem}
Assume that $L[\lambda]$ is nonresonant in $X_{M_{1}}$ and $$|I-A_{M_{1}}[\lambda]|:=\Big(1-r'_{9}[\lambda](1)\Big)\Big(1-r_{10}[\lambda](0)\Big)-r_{9}[\lambda](0)\, r'_{10}[\lambda](1)\neq 0,$$ then 
\begin{equation*}
\small
\begin{aligned}
G_{P}[\lambda](t,s)=&G_{M_{1}}[\lambda](t,s)+\frac{r_{9}[\lambda](t)}{|I-A_{M_{1}}[\lambda]|}\left(\left(1-r_{10}[\lambda](0)\right)\,\frac{\partial }{\partial t}G_{M_{1}}[\lambda](1,s)+r'_{10}[\lambda](1)\, G_{M_{1}}[\lambda](0,s)\right)\\
&+\frac{r_{10}[\lambda](t)}{|I-A_{M_{1}}[\lambda]|}\left(r_{9}[\lambda](0)\,\frac{\partial }{\partial t}G_{M_{1}}[\lambda](1,s)+\left(1-r'_{9}[\lambda](1)\right)\, G_{M_{1}}[\lambda](0,s)\right)\\
=&G_{M_{1}}[\lambda](t,s)+\frac{1}{|I-A_{M_{1}}[\lambda]|}\,\left(1+\frac{\partial}{\partial s} G_{M_{1}}[\lambda](0,1)\right)\,G_{M_{1}}[\lambda](t,0)\,\frac{\partial }{\partial t}G_{M_{1}}[\lambda](1,s)\\
&-\frac{1}{|I-A_{M_{1}}[\lambda]|}\,\frac{\partial^{2}}{\partial s\partial t}G_{M_{1}}[\lambda](1,1)\,G_{M_{1}}[\lambda](t,0)\, G_{M_{1}}[\lambda](0,s)\\
&-\frac{G_{M_{1}}[\lambda](0,0)}{|I-A_{M_{1}}[\lambda]|}\,\frac{\partial}{\partial s}G_{M_{1}}[\lambda](t,1)\,\frac{\partial}{\partial t}G_{M_{1}}[\lambda](1,s)\\
&-\frac{1}{|I-A_{M_{1}}[\lambda]|}\,\left(1-\frac{\partial}{\partial t}G_{M_{1}}[\lambda](1,0)\right)\, \frac{\partial}{\partial s}G_{M_{1}}[\lambda](t,1)\,G_{M_{1}}[\lambda](0,s),\;\; \forall (t,s) \in I\times I.
\end{aligned}
\end{equation*}
\end{theorem}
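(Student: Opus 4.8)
The plan is to invoke Theorem~\ref{th_Ci} taking the Mixed~1 problem as the base problem, so that $g=G_{M_1}[\lambda]$ and the two associated solutions of $L[\lambda]u=0$ are $\omega_1=r_9[\lambda]$ (characterized by $u'(0)=1$, $u(1)=0$) and $\omega_2=r_{10}[\lambda]$ (characterized by $u'(0)=0$, $u(1)=1$). The first and only genuinely creative step is to rewrite the periodic conditions $u(0)=u(1)$, $u'(0)=u'(1)$ in the form demanded by \eqref{e-linear-delta}, namely $B_1(u)=u'(0)=\delta_1 C_1(u)$ and $B_2(u)=u(1)=\delta_2 C_2(u)$. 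Reading each periodicity relation as an expression of a Mixed~1 functional through the opposite endpoint, I would take $\delta_1=\delta_2=1$, $C_1(u)=u'(1)$ and $C_2(u)=u(0)$.

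Next I would assemble the matrix $A=(a_{ij})$ with $a_{ij}=\delta_j\,C_i(\omega_j)=C_i(\omega_j)$. Using $C_1(u)=u'(1)$, $C_2(u)=u(0)$ and the defining data of $r_9[\lambda]$, $r_{10}[\lambda]$, one obtains
$$A_{M_1}[\lambda]=\begin{pmatrix} r'_9[\lambda](1) & r'_{10}[\lambda](1)\\ r_9[\lambda](0) & r_{10}[\lambda](0)\end{pmatrix},$$
whence $|I-A_{M_1}[\lambda]|=(1-r'_9[\lambda](1))(1-r_{10}[\lambda](0))-r_9[\lambda](0)\,r'_{10}[\lambda](1)$ is exactly the determinant assumed nonzero, which is the condition under which Theorem~\ref{th_Ci} applies and the periodic problem is nonresonant. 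Inverting the $2\times2$ matrix gives $(I-A_{M_1}[\lambda])^{-1}$ with entries $b_{11}=(1-r_{10}[\lambda](0))/|I-A_{M_1}[\lambda]|$, $b_{12}=r'_{10}[\lambda](1)/|I-A_{M_1}[\lambda]|$, $b_{21}=r_9[\lambda](0)/|I-A_{M_1}[\lambda]|$ and $b_{22}=(1-r'_9[\lambda](1))/|I-A_{M_1}[\lambda]|$.

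Substituting these data into formula \eqref{e-formula}, and noting that $C_1(g(\cdot,s))=\frac{\partial}{\partial t}G_{M_1}[\lambda](1,s)$ and $C_2(g(\cdot,s))=G_{M_1}[\lambda](0,s)$, I would expand the double sum and collect the four terms according to the factors $\omega_1(t)=r_9[\lambda](t)$ and $\omega_2(t)=r_{10}[\lambda](t)$; this reproduces the first displayed equality verbatim. To pass to the second, fully explicit form I would eliminate $r_9[\lambda]$ and $r_{10}[\lambda]$ via the identifications recorded in the Preliminaries, $r_9[\lambda](t)=G_{M_1}[\lambda](t,0)$ and $r_{10}[\lambda](t)=-\frac{\partial}{\partial s}G_{M_1}[\lambda](t,1)$, together with the induced endpoint data $r_9[\lambda](0)=G_{M_1}[\lambda](0,0)$, $r'_9[\lambda](1)=\frac{\partial}{\partial t}G_{M_1}[\lambda](1,0)$, $r_{10}[\lambda](0)=-\frac{\partial}{\partial s}G_{M_1}[\lambda](0,1)$ and $r'_{10}[\lambda](1)=-\frac{\partial^2}{\partial s\partial t}G_{M_1}[\lambda](1,1)$.

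Everything is routine once the setup is fixed; the only delicate point is the sign bookkeeping in this last substitution, since the identifications of $r_9[\lambda]$ and $r_{10}[\lambda]$ carry opposite signs and an extra differentiation in $s$, so that the constants $1-r_{10}[\lambda](0)$ and $1-r'_9[\lambda](1)$ become $1+\frac{\partial}{\partial s}G_{M_1}[\lambda](0,1)$ and $1-\frac{\partial}{\partial t}G_{M_1}[\lambda](1,0)$ respectively. Tracking these signs correctly, and confirming that the resulting coefficients match those in the statement, is the main---if modest---obstacle.
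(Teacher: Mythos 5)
Your proposal is correct and follows exactly the route the paper intends: apply Theorem~\ref{th_Ci} with the Mixed~1 problem as base, write the periodic conditions as $u'(0)=C_1(u):=u'(1)$ and $u(1)=C_2(u):=u(0)$ with $\omega_1=r_9[\lambda]$, $\omega_2=r_{10}[\lambda]$, and then translate via $r_9[\lambda](t)=G_{M_1}[\lambda](t,0)$ and $r_{10}[\lambda](t)=-\frac{\partial}{\partial s}G_{M_1}[\lambda](t,1)$. All the matrix entries, the determinant, and the sign bookkeeping in your final substitution check out against the stated formula.
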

\section{Nonlinear problem}
 In this section we will study the existence of solutions of the nonlinear problem 
\begin{equation}\label{nonlinear-problem}
\left\{
\begin{aligned}
L_{n}\,u(t)&=f(t,u(t)), \;\; \text{a.e.}\;\; t\in I,\\
B_{i}(u)&=\delta_{i}\, C_i(u),\quad i=1,\ldots,n,
\end{aligned}
\right.
\end{equation}
with 
\[L_n u(t):= u^{(n)}(t)+ a_1(t)\, u^{(n-1)}(t)+ \cdots + a_n(t)\,u(t) \]
the general $n$-th order linear operator.

The existence results will be deduced from Schaefer's fixed point theorem of integral operators defined in Banach spaces.

We will consider also the homogeneous particular
\begin{equation}\label{nonlinear-problemh}
\left\{
\begin{aligned}
L_{n}\,u(t)&=f(t,u(t)), \;\; \text{a.e.}\;\; t\in I,\\
B_{i}(u)&=0,\quad i=1,\ldots,n.
\end{aligned}
\right.
\end{equation}
We will assume that the nonlinear part of problem \eqref{nonlinear-problem} satisfies the following regularity conditions:
\begin{flalign*}
&(H_{1}) \quad  \text{For} \:\;n\geq 2, \text{the function}\hspace{2ex} f:I\times \R\rightarrow \R\hspace{2ex} \text{is a}\,L^{1}-\text{Carathéodory function, that is,}&
\end{flalign*}

\begin{itemize}
\item[-] $f(\cdot,x)$ is measurable for all $x\in \R$.
\item[-] $f(t,\cdot)$ is continuous for a.e. $t\in I$.
\item[-] For every $R>0$ there exists $\phi_{R}\in L^{1}(\R)$ such that 
$$\Big|f(t,x)\Big|\leq \phi_{R}(t),$$
for all $x\in [-R,R]$ and a.e. $t\in I$.
\end{itemize}

For $n=1$, the function  $f:I\times \R\rightarrow \R$ is $L^{\infty}-$ Carathéodory function, that is,
\begin{itemize}
	\item[-] $f(\cdot,x)$ is measurable for all $x\in \R$.
	\item[-] $f(t,\cdot)$ is continuous for a.e. $t\in I$.
	\item[-] For every $r>0$ there exists $h_{r}\in L^{\infty}(\R)$ such that 
	$$\Big|f(t,x)\Big|\leq h_{r}(t),$$
	for all $x\in [-r,r]$ and a.e. $t\in I$.
\end{itemize}
\begin{flalign*}
&(H_{2})\quad  \exists K\in L^{1}(I), K\geq 0\;\; \text{such that}&
\end{flalign*}
\begin{equation*}
|f(t,x)-f(t,y)|\le K(t)\,|x-y|,\;\; \text{for all } x,y\in \mathbb{R} \;\; \text{and } \;\; t\in I.
\end{equation*}
Let us define $X\equiv (C(I),\| \cdot \|_{\infty})$ the real Banach space endowed  with the supremum  norm 
\begin{equation*}
\| u\|_{\infty}=\sup_{t\in I} \;\;|u(t)|,\quad \text{for all}\;\;u\in X. 
\end{equation*}
We will denote by $u_{A}$ and $u_{B}$ the solutions of problems \eqref{nonlinear-problem} and \eqref{nonlinear-problemh} respectively. We know that these solutions are given by the following expressions 
\begin{equation*}
\begin{aligned}
u_{A}(t)&=\displaystyle \int_{0}^{1} G(t,s,\delta_{1},\ldots,\delta_{n})\,f(s,u_{A}(s))\,ds,\\
u_{B}(t)&=\displaystyle \int_{0}^{1} g(t,s)\,f(s,u_{B}(s))\,ds,
\end{aligned}
\end{equation*}
where $G$ and $g$ are the Green's functions related to the linear problems obtained from \eqref{nonlinear-problem} and \eqref{nonlinear-problemh}, respectively. In particular, for $n=2$, this problems are  \eqref{e-linear-delta} and \eqref{e-u} and, for $n\neq 2$, they are formulated in an analogous way, with obvious notations. Furthermore, they are linked by the generalization of formula \eqref{e-formula} to arbitrary order:
\begin{equation}\label{e-formula3}
	G(t,s,\delta_{1},\dots,\delta_n):=g(t,s)+ \sum_{i=1}^{n} \sum_{j=1}^{n} \delta_{i} \, b_{ij} \, \omega_{i}(t)  \, C_j(g(\cdot,s)),\quad t, \; s \in I,
\end{equation}
As we see, this formula is totally analogous to \eqref{e-formula}, with obvious notations, and its proof can be consulted in \cite{Cabada}.

Let us define 
\begin{equation*}
\begin{aligned}
K^{1}&=\max_{t\in I}\;\; \displaystyle \int_{0}^{1} |g(t,s)|\,K(s)\, ds,\\
K_{ij}^{2}&=\max_{t\in I}\;\; |\omega_{i}(t)| \, \displaystyle \int_{0}^{1} |C_{j}(g(\cdot,s))| \,K(s)\,ds,\quad \forall i,j=1,\ldots,n,\\
K_{ij}^{3}&=\max_{t\in I}\;\; |\omega_{i}(t)| \, \displaystyle \int_{0}^{1} |C_{j}(g(\cdot,s))\,f(s,0)| \,ds,\quad \forall i,j=1,\ldots,n,\\
P&=\max_{t\in I} \displaystyle \int_{0}^{1} |G(t,s,\delta_{1},\ldots,\delta_{n})| \,K(s)\,ds,\\
Q&=\max_{t\in I} \displaystyle \int_{0}^{1} |G(t,s,\delta_{1},\ldots,\delta_{n})\, f(s,0)|\,ds.
\end{aligned}
\end{equation*}
We assume that the following condition is fulfilled:
\begin{flalign*}
&(H_{3})\qquad K^{1}<1.&
\end{flalign*}
\begin{theorem}\label{thdir}
If conditions  $(H_{2})$ and $(H_{3})$ hold, then the following inequality is fulfilled
\begin{equation}\label{desigualdadinf}
\|u_{B}-u_{A}\|_{\infty}\le \frac{1}{1-K^{1}}\left(\sum_{i=1}^{n} \sum_{j=1}^{n} |\delta_{i} \, b_{ij}| K_{ij}^{2}\,\|u_{A}\|_{\infty}+ \sum_{i=1}^{n} \sum_{j=1}^{n} |\delta_{i} \,  b_{ij}| \,K_{ij}^{3} \right).
\end{equation}
\end{theorem}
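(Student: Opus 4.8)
The plan is to start from the two integral representations of $u_A$ and $u_B$, subtract them, and use the decomposition formula \eqref{e-formula3} to replace $G$ by $g$ plus its correction term. First I would write
$$u_B(t)-u_A(t)=\int_0^1 g(t,s)\big(f(s,u_B(s))-f(s,u_A(s))\big)\,ds-\sum_{i=1}^n\sum_{j=1}^n \delta_i\,b_{ij}\,\omega_i(t)\int_0^1 C_j(g(\cdot,s))\,f(s,u_A(s))\,ds,$$
so that the difference splits into a ``diagonal'' term governed by $g$ (in which $f$ is evaluated at the two unknown solutions) and a finite ``boundary correction'' sum (in which $f$ is evaluated only at $u_A$).

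For the first integral I would apply the Lipschitz bound $(H_2)$, namely $|f(s,u_B(s))-f(s,u_A(s))|\le K(s)\,|u_B(s)-u_A(s)|\le K(s)\,\|u_B-u_A\|_\infty$, and then estimate $\int_0^1|g(t,s)|\,K(s)\,ds\le K^1$ uniformly in $t$ by the definition of $K^1$. This produces a contribution of at most $K^1\,\|u_B-u_A\|_\infty$, which is precisely the term I will later move to the left-hand side.

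For the correction sum the key idea is to split $f(s,u_A(s))=\big(f(s,u_A(s))-f(s,0)\big)+f(s,0)$. Applying $(H_2)$ to the first summand gives $|f(s,u_A(s))-f(s,0)|\le K(s)\,|u_A(s)|\le K(s)\,\|u_A\|_\infty$, while the second summand is kept as is. Taking absolute values inside the sum, factoring out $|\omega_i(t)|$ in front of the two $t$-independent integrals, and maximizing over $t$, the two resulting pieces are exactly $|\delta_i b_{ij}|\,K_{ij}^2\,\|u_A\|_\infty$ and $|\delta_i b_{ij}|\,K_{ij}^3$, by the very definitions of $K_{ij}^2$ and $K_{ij}^3$.

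Combining the two estimates and using that the supremum of a sum does not exceed the sum of the suprema, I would obtain
$$\|u_B-u_A\|_\infty\le K^1\,\|u_B-u_A\|_\infty+\sum_{i=1}^n\sum_{j=1}^n|\delta_i b_{ij}|\big(K_{ij}^2\,\|u_A\|_\infty+K_{ij}^3\big).$$
The final step invokes hypothesis $(H_3)$, so that $1-K^1>0$; transferring the term $K^1\,\|u_B-u_A\|_\infty$ to the left and dividing by $1-K^1$ yields precisely \eqref{desigualdadinf}. The argument is elementary, and the only point that demands care—rather than being a genuine obstacle—is the bookkeeping when interchanging the maximum over $t$ with the integrals and the finite double sum, together with the add-and-subtract step at $f(s,0)$, which is exactly what isolates the $\|u_A\|_\infty$-dependent part from the constant part of the bound.
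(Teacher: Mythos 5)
Your proposal is correct and follows essentially the same route as the paper's own proof: the same use of \eqref{e-formula3} to split $u_B-u_A$ into the $g$-governed term plus the boundary-correction double sum, the same add-and-subtract at $f(s,0)$ to separate the $\|u_A\|_\infty$-dependent part from the constant part, and the same final absorption of $K^1\|u_B-u_A\|_\infty$ into the left-hand side using $(H_3)$. No gaps; the bookkeeping you flag is handled exactly as in the paper.
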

\begin{proof}
	
By using \eqref{e-formula} we have that
\begin{equation*} 
\begin{aligned}
u_{B}(t)-u_{A}(t)=&\displaystyle \int_{0}^{1} g(t,s)\,f(s,u_{B}(s))\,ds-\displaystyle \int_{0}^{1} G(t,s,\delta_{1},\ldots,\delta_{n})\,f(s,u_{A}(s))\,ds\\
=&\displaystyle \int_{0}^{1} g(t,s)\,\left(f(s,u_{B}(s))-f(s,u_{A}(s))\right)\,ds\\
&-\sum_{i=1}^{n} \sum_{j=1}^{n} \delta_{i} \, b_{ij} \, \omega_{i}(t)  \,\displaystyle\int_{0}^{1}   C_j(g(\cdot,s))\left(f(s,u_{A}(s))-f(s,0)\right)\,ds\\
&-\sum_{i=1}^{n} \sum_{j=1}^{n} \delta_{i} \, b_{ij} \, \omega_{i}(t)  \,\displaystyle\int_{0}^{1}   C_j(g(\cdot,s))\,f(s,0)\,ds.
\end{aligned}
\end{equation*} 
Then, for all $t\in I$, from $(H_{2})$, we infer that 
\begin{equation*}
\begin{aligned}
|u_{B}(t)-u_{A}(t)|\le 
& \|u_{B}-u_{A}\|_{\infty}\,\displaystyle \int_{0}^{1} | g(t,s)| \,K(s)\,ds\\
+&  \|u_{A}\|_{\infty}\, \sum_{i=1}^{n} \sum_{j=1}^{n} | \delta_{i} \,  b_{ij}|  \, | \omega_{i}(t)|   \,\displaystyle\int_{0}^{1}   |C_j(g(\cdot,s))| \,K(s)\,ds\\
+&\sum_{i=1}^{n} \sum_{j=1}^{n} |\delta_{i} \, b_{ij}|  \, |\omega_{i}(t)| \,\displaystyle \int_{0}^{1}   |C_j(g(\cdot,s))\,f(s,0)| \,ds.
\end{aligned}
\end{equation*}	
Therefore 
\begin{equation*}
\|u_{B}-u_{A}\|_{\infty}\le 
K^{1}\, \|u_{B}-u_{A}\|_{\infty}+ \|u_{A}\|_{\infty}\, \sum_{i=1}^{n} \sum_{j=1}^{n} |\delta_{i}  \,  b_{ij}| \, 
K_{ij}^{2}+\sum_{i=1}^{n} \sum_{j=1}^{n} | \delta_{i} \,  b_{ij}| \, K_{ij}^{3},
\end{equation*}
that is, using $(H_{3})$,
\begin{equation*}
\|u_{B}-u_{A}\|_{\infty}\le \frac{1}{1-K^{1}}\left(\sum_{i=1}^{n} \sum_{j=1}^{n} |\delta_{i} \, b_{ij}| K_{ij}^{2}\,\|u_{A}\|_{\infty}+ \sum_{i=1}^{n} \sum_{j=1}^{n} |\delta_{i} \, b_{ij}| \,K_{ij}^{3} \right).
\end{equation*}
\end{proof}

\begin{corollary}\label{thdir1}
If conditions $(H_{2})$ and $(H_{3})$ hold, then the following inequalities are fulfilled
\begin{equation*}
\begin{aligned}
\|u_{B}\|_{\infty}&\le \frac{\displaystyle \sum_{i=1}^{n} \displaystyle \sum_{j=1}^{n} |\delta_{i} \, b_{ij}| K_{ij}^{2}-K^{1}+1}{1-K^{1}}\,\|u_{A}\|_{\infty}+\frac{\displaystyle \sum_{i=1}^{n} \displaystyle \sum_{j=1}^{n} |\delta_{i}\, b_{ij}| \,K_{ij}^{3}}{1-K^{1}},\\
\|u_{B}\|_{\infty}&\ge \frac{1-K^{1}-\displaystyle \sum_{i=1}^{n} \displaystyle \sum_{j=1}^{n} |\delta_{i} \, b_{ij}| K_{ij}^{2}}{1-K^{1}}\,\|u_{A}\|_{\infty}-\frac{\displaystyle \sum_{i=1}^{n} \displaystyle \sum_{j=1}^{n} |\delta_{i} \, b_{ij}| \,K_{ij}^{3}}{1-K^{1}}.
\end{aligned}
\end{equation*}
\end{corollary}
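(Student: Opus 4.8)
The plan is to obtain both inequalities as a direct consequence of Theorem \ref{thdir} combined with the two forms of the triangle inequality for the supremum norm. The key observation is that once the quantity $\|u_{B}-u_{A}\|_{\infty}$ has been controlled, the individual norm $\|u_{B}\|_{\infty}$ can be pinned between $\|u_{A}\|_{\infty}-\|u_{B}-u_{A}\|_{\infty}$ and $\|u_{A}\|_{\infty}+\|u_{B}-u_{A}\|_{\infty}$, so no new analytic input beyond \eqref{desigualdadinf} is required.

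First I would write down the two elementary estimates
\begin{equation*}
\|u_{A}\|_{\infty}-\|u_{B}-u_{A}\|_{\infty}\le \|u_{B}\|_{\infty}\le \|u_{A}\|_{\infty}+\|u_{B}-u_{A}\|_{\infty},
\end{equation*}
which hold for any two elements of the Banach space $X=(C(I),\|\cdot\|_{\infty})$. Then I would substitute the upper bound for $\|u_{B}-u_{A}\|_{\infty}$ furnished by Theorem \ref{thdir}, valid under hypotheses $(H_{2})$ and $(H_{3})$, into each side. For the upper estimate this yields
\begin{equation*}
\|u_{B}\|_{\infty}\le \|u_{A}\|_{\infty}+\frac{1}{1-K^{1}}\left(\sum_{i=1}^{n}\sum_{j=1}^{n}|\delta_{i}\,b_{ij}|\,K_{ij}^{2}\,\|u_{A}\|_{\infty}+\sum_{i=1}^{n}\sum_{j=1}^{n}|\delta_{i}\,b_{ij}|\,K_{ij}^{3}\right),
\end{equation*}
and symmetrically for the lower estimate with a minus sign in front of the parenthesis.

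The only remaining step is the algebraic rearrangement: I would collect the two terms proportional to $\|u_{A}\|_{\infty}$ over the common denominator $1-K^{1}$, using
\begin{equation*}
1\pm\frac{\sum_{i,j}|\delta_{i}\,b_{ij}|\,K_{ij}^{2}}{1-K^{1}}=\frac{(1-K^{1})\pm\sum_{i,j}|\delta_{i}\,b_{ij}|\,K_{ij}^{2}}{1-K^{1}},
\end{equation*}
which produces exactly the coefficients $\big(\sum_{i,j}|\delta_{i}\,b_{ij}|\,K_{ij}^{2}-K^{1}+1\big)/(1-K^{1})$ in the upper bound and $\big(1-K^{1}-\sum_{i,j}|\delta_{i}\,b_{ij}|\,K_{ij}^{2}\big)/(1-K^{1})$ in the lower bound, while the $K_{ij}^{3}$ contribution stays as the additive term $\pm\big(\sum_{i,j}|\delta_{i}\,b_{ij}|\,K_{ij}^{3}\big)/(1-K^{1})$. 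Here $(H_{3})$ guarantees $1-K^{1}>0$, so dividing preserves the direction of the inequalities. I do not anticipate any genuine obstacle in this argument, since it reduces entirely to the triangle inequality and a routine regrouping of terms; the substantive estimate was already carried out in Theorem \ref{thdir}.
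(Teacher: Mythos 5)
Your argument is correct and is essentially identical to the paper's own proof: the paper likewise derives the corollary directly from inequality \eqref{desigualdadinf} together with the reverse triangle inequality $\bigl|\,\|u_{B}\|_{\infty}-\|u_{A}\|_{\infty}\,\bigr|\le \|u_{B}-u_{A}\|_{\infty}$, followed by the same regrouping of terms over the denominator $1-K^{1}$. Nothing further is needed.
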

\begin{proof}
The proof is an immediate consequence of \eqref{desigualdadinf}  and the following inequality 
\begin{equation*}
\Big| \|u_{B}\|_{\infty}-\|u_{A}\|_{\infty}\Big|\le \|u_{B}-u_{A}\|_{\infty}. 
\end{equation*} 
\end{proof}

Next we will state Scheafer's fixed-point theorem (\cite{Tychnof}) that we will apply to the operator $T:X\rightarrow X$ given by
\begin{equation}\label{operador}
T\,u(t):=\displaystyle \int_{0}^{1} G(t,s,\delta_{1},\ldots,\delta_{n})\,f(s,u(s))\,ds,\;\; t\in I,
\end{equation}
 to guarantee the existence of a solution of problem \eqref{nonlinear-problem}.
 \begin{theorem}[Schaefer]\label{Schaefer}
 	Let $T:X \rightarrow X$ be a continuous and compact mapping of a Banach space $X$, such that the set 
 	\begin{equation*}
 	\{x\in X:\;\; x=\mu\,T\,x\;\; \text{for some}\;\; 0\le \mu\le 1\}
 	\end{equation*}
 	is bounded. Then $T$ has a fixed point.
 \end{theorem}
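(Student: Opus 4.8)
The plan is to derive Schaefer's theorem from Schauder's fixed-point theorem together with a radial retraction argument, which is the standard route. First I would exploit the boundedness hypothesis: since the set $S=\{x\in X:\,x=\mu\,Tx\ \text{for some}\ \mu\in[0,1]\}$ is bounded, there exists a radius $M>0$ with $\|x\|<M$ for every $x\in S$. I then fix the closed ball $\overline{B}_M=\{x\in X:\|x\|\le M\}$, which is a nonempty closed, bounded and convex subset of the Banach space $X$.

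Next I would introduce the radial retraction $r:X\to\overline{B}_M$ defined by $r(x)=x$ when $\|x\|\le M$ and $r(x)=M\,x/\|x\|$ when $\|x\|>M$; this map is continuous and satisfies $r(X)\subset\overline{B}_M$. I would then consider the composition $T^{\ast}:=r\circ T:\overline{B}_M\to\overline{B}_M$. Because $T$ is continuous and compact and $r$ is continuous, $T^{\ast}$ is a continuous self-map of $\overline{B}_M$ whose image $r(T(\overline{B}_M))$ is relatively compact (the continuous image under $r$ of the relatively compact set $T(\overline{B}_M)$). Schauder's fixed-point theorem therefore yields a point $x_0\in\overline{B}_M$ with $x_0=r(Tx_0)$.

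The decisive step is to upgrade this to an actual fixed point of $T$, and I would argue by contradiction. Suppose $\|Tx_0\|>M$. Then by the definition of $r$ we have $x_0=M\,Tx_0/\|Tx_0\|=\mu\,Tx_0$ with $\mu:=M/\|Tx_0\|\in(0,1)\subset[0,1]$, so that $x_0\in S$ and hence $\|x_0\|<M$; but computing directly gives $\|x_0\|=M\,\|Tx_0\|/\|Tx_0\|=M$, a contradiction. Consequently $\|Tx_0\|\le M$, so $r$ acts as the identity at $Tx_0$, and $x_0=r(Tx_0)=Tx_0$, that is, $x_0$ is a fixed point of $T$.

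I expect the main obstacle to lie less in the retraction bookkeeping than in arranging the hypotheses so that Schauder's theorem genuinely applies: one must verify that $T^{\ast}$ sends the closed bounded convex set $\overline{B}_M$ into a relatively compact subset of itself, which follows from the compactness of $T$ and the continuity of $r$ but deserves explicit justification, and one must record that $r$ is continuous across the sphere $\|x\|=M$. Once Schauder provides $x_0$, the remaining work is the short contradiction above, which is the genuinely clever point of the proof and the place where the boundedness of $S$ is actually consumed.
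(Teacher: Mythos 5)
Your argument is correct: it is the standard derivation of Schaefer's theorem from Schauder's fixed-point theorem via the radial retraction onto $\overline{B}_M$, and the final contradiction (where $x_0=\mu\,Tx_0$ with $\mu=M/\|Tx_0\|$ forces $\|x_0\|=M$ while membership in the bounded set forces $\|x_0\|<M$) is exactly the step that consumes the boundedness hypothesis. Note, however, that the paper does not prove this statement at all --- it is quoted as a known result with a citation to Bonsall's lecture notes --- so there is no in-paper proof to compare against; the only cosmetic point in your write-up is that you should fix $M$ strictly larger than $\sup_{x\in S}\|x\|$ so that the strict inequality $\|x\|<M$ used in the contradiction is available.
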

Now, we will  use Schaefer's theorem to ensure the existence of solutions of the nonlinear problem	\eqref{nonlinear-problem}.

\begin{theorem}\label{thx}
Assume that $(H_{1})$ and $(H_{2})$ hold and $P<1$. Then problem \eqref{nonlinear-problem} has at least one solution $u\in X$. 	
\end{theorem}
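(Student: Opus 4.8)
The plan is to verify the three hypotheses of Schaefer's theorem (Theorem \ref{Schaefer}) for the operator $T$ defined in \eqref{operador}: that $T$ maps $X=(C(I),\|\cdot\|_\infty)$ continuously into itself, that $T$ is compact, and that the set of possible fixed points of $\mu T$ is bounded uniformly in $\mu\in[0,1]$. Once these are established, Schaefer's theorem yields a fixed point $u$ of $T$, which by construction satisfies $u(t)=\int_0^1 G(t,s,\delta_1,\ldots,\delta_n)\,f(s,u(s))\,ds$ and is therefore a solution of \eqref{nonlinear-problem}.

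First I would check that $T$ is well defined and continuous. For $u\in X$ with $\|u\|_\infty\le R$, the map $s\mapsto f(s,u(s))$ is measurable and, by $(H_1)$, dominated by $\phi_R\in L^1(I)$; combined with the continuity of $G$ on $I\times I$, this makes $Tu$ well defined and continuous in $t$. Continuity of $T$ then follows by taking a sequence $u_k\to u$ in $X$, using the pointwise continuity of $f(t,\cdot)$ from $(H_1)$ together with the dominating function $\phi_R$, and passing to the limit under the integral sign via dominated convergence.

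Next, for compactness I would invoke the Arzel\`a--Ascoli theorem. Given a bounded set $\mathcal{B}\subset X$ with $\|u\|_\infty\le R$ for all $u\in\mathcal B$, the bound $|f(s,u(s))|\le\phi_R(s)$ gives the uniform estimate $\|Tu\|_\infty\le\max_{t\in I}\int_0^1|G(t,s,\delta_1,\ldots,\delta_n)|\,\phi_R(s)\,ds$, so $T(\mathcal B)$ is uniformly bounded. Equicontinuity of $\{Tu:u\in\mathcal B\}$ follows from the uniform continuity of $G$ on the compact square $I\times I$ together with the integrability of $\phi_R$, which lets one bound $|Tu(t_1)-Tu(t_2)|$ independently of $u\in\mathcal B$ by the modulus of continuity of $G$. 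Hence $T(\mathcal B)$ is relatively compact and $T$ is completely continuous.

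The decisive step is the a priori bound. Suppose $u=\mu Tu$ for some $\mu\in[0,1]$. Writing $|f(s,u(s))|\le|f(s,u(s))-f(s,0)|+|f(s,0)|\le K(s)\,\|u\|_\infty+|f(s,0)|$ by $(H_2)$, I would estimate, for each $t\in I$,
\[
|u(t)|\le\int_0^1|G(t,s,\delta_1,\ldots,\delta_n)|\,\big(K(s)\,\|u\|_\infty+|f(s,0)|\big)\,ds\le P\,\|u\|_\infty+Q.
\]
Taking the supremum over $t$ and using $P<1$ gives $\|u\|_\infty\le Q/(1-P)$, a bound independent of $\mu$, so the set in Theorem \ref{Schaefer} is bounded and the conclusion follows. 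I expect the equicontinuity estimate in the compactness step to be the main obstacle, as it requires combining the uniform continuity of $G$ with the $L^1$-domination from $(H_1)$; by contrast, the a priori bound is a short direct computation once $(H_2)$ and $P<1$ are in hand.
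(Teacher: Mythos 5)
Your overall strategy is exactly the paper's: verify continuity and compactness of $T$, obtain the a priori bound $\|u\|_\infty\le P\,\|u\|_\infty+Q$ for any solution of $u=\mu\,Tu$, and conclude $\|u\|_\infty\le Q/(1-P)$ from $P<1$ before invoking Schaefer's theorem. That final estimate is identical to the one in the paper, and your uniform boundedness and continuity arguments are sound. The only point where you genuinely diverge is the equicontinuity step: the paper differentiates $T\,u$ and bounds $\left|\frac{\partial}{\partial t}G(t,s,\delta_1,\ldots,\delta_n)\right|$, whereas you bound $|Tu(t_1)-Tu(t_2)|$ directly by the modulus of continuity of $G$ times $\|\phi_R\|_{L^1}$.

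There is, however, a gap in that step for the case $n=1$, which the statement is meant to cover (note that $(H_1)$ has a separate clause for $n=1$, and the paper's proof treats $n=1$ separately). Your equicontinuity argument rests on the uniform continuity of $G(\cdot,\cdot,\delta_1,\ldots,\delta_n)$ on the compact square $I\times I$. This is true for $n\ge 2$, where the Green's function is continuous and only its $t$-derivative jumps across the diagonal; but for a first-order problem the Green's function itself has a unit jump across $\{t=s\}$ and is therefore not continuous on $I\times I$, so the claimed modulus-of-continuity bound does not exist. The argument can be repaired for $n=1$ by splitting the integral: on the strip between $t_1$ and $t_2$ one uses the $L^\infty$ bound $h_r$ from the $n=1$ clause of $(H_1)$ together with the smallness of the measure $|t_2-t_1|$, and off that strip $G(t_1,s)-G(t_2,s)$ is controlled as you intend. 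The paper instead writes $\frac{\partial}{\partial t}G(t,s,\delta_1)=-a_1(t)\,G(t,s,\delta_1)$ and bounds $|(Tu)(t_2)-(Tu)(t_1)|$ by $\tilde N\int_{t_1}^{t_2}|a_1(s)|\,ds$, which achieves the same end. For $n\ge 2$ your version is, if anything, slightly more economical than the paper's, since it does not require differentiating $Tu$.
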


\begin{proof}
First, note that the fixed points of  operator $T$ defined in \eqref{operador} coincide with the solutions of  problem \eqref{nonlinear-problem}.

Now, we show that operator $T$ is compact. Since $G(t,s,\delta_{1},\ldots,\delta_{n})$ continuous and $f$ Carathéodory, we have that operator $T$ is continuous too.

Next, we will prove that $T$ maps bounded sets into relatively compact sets. Let $H\subset X$ be a bounded set. Since $H$ is bounded, there exists $r\in \mathbb{R}$, $r>0$ such that $\|u \|_{\infty}\le r$ for all $u\in H$. Then,
\begin{equation*}
\begin{aligned}
|T\, u(t)|&\le \displaystyle \int_{0}^{1} |G(t,s,\delta_{1},\ldots,\delta_{n})|| f(s,u(s))-f(s,0)|\,ds+\displaystyle \int_{0}^{1} |G(t,s,\delta_{1},\ldots,\delta_{n})| |f(s,0)|\,ds\\
&\le \|u\|_{\infty}\,\displaystyle \int_{0}^{1} |G(t,s,\delta_{1},\ldots,\delta_{n})|\,K(s)\,ds +\displaystyle \int_{0}^{1} |G(t,s,\delta_{1},\ldots,\delta_{n})||f(s,0)|\,ds.
\end{aligned}
\end{equation*}
So, for all $u\in H$, we have that 
 \begin{equation}\label{formula2}
 \| T\, u\|_{\infty} \le r\,P +Q,
\end{equation}
that is, $T(H)$ is bounded.

Let us show now the  equicontinuity of $T$.  For all  $t\in I$ and $u\in H$, we have that
$$ \begin{aligned}
| (T\, u) '(t) | &=  \left | \int_{0} ^ {1} \frac {\partial} {\partial t} G(t,s,\delta_{1},\ldots,\delta_{n})\, f (s, u (s))\, ds \right | \leq
\int_{0} ^ {1} \left | \frac {\partial} {\partial t} G(t,s,\delta_{1},\ldots,\delta_{n}) \right || f (s, u (s)) |\, ds   \\
&\leq \int_{0} ^ {1} \left | \frac {\partial} {\partial t} G(t,s,\delta_{1},\ldots,\delta_{n}) \right | \, \phi_{r}(s)\,ds.
 \end{aligned}$$
If $n\geq 2$, then the regularity of the Green's function $G(t,s,\delta_{1},\ldots,\delta_{n})$ allows us guarantee that it exists  $ M \in \R $, $M>0$ such that 
 $\left| \frac {\partial} {\partial t} G(t,s,\delta_{1},\ldots,\delta_{n}) \right | \le M$. Therefore, $$\int_{0} ^ {1} \left | \frac {\partial} {\partial t} G(t,s,\delta_{1},\ldots,\delta_{n}) \right | \, \phi_{r}(s)\,ds\leq M\,\int_{0} ^ {1} \phi_{r}(s)\,ds ..$$ So, for all $ t_1, t_2 \in I, \hspace {1ex} t_1 <t_2 $, we infer that 
$$ | (T\, u) (t_2)-(T\, u) (t_1) | = \left | \int_{t_1} ^ {t_2} (T\, u) '(s) ds \right | \leq \int_{t_1} ^ { t_2} | (T\, u) '(s) | ds \leq N (t_2-t_1). $$

If $n=1$, then the regularity of the Green's function $G(t,s,\delta_{1})$ allows us to ensure that   it exists $ \tilde{N} \in \R $, $\tilde{N}>0$ such that 
$\int_{0}^{1} |G(t,s,\delta_{1})|\,\phi_{r}(s)\,ds  \le \tilde{N}$. 
Therefore,
 $$\int_{0} ^ {1} \left | \frac {\partial} {\partial t} G(t,s,\delta_{1}) \right | \, \phi_{r}(s)\,ds=\int_{0} ^ {1} |a_{1}(t)||G(t,s,\delta_{1})| \, \phi_{r}(s)\,ds\leq \tilde{N}\, |a_{1}(t)|.  $$ 
 Then, for all $ t_1, t_2 \in I, \hspace {1ex} t_1 <t_2 $, we have  that 
$$ | (T\, u) (t_2)-(T\, u) (t_1) | = \left | \int_{t_1} ^ {t_2} (T\, u) '(s) ds \right | \leq \int_{t_1} ^ { t_2} | (T\, u) '(s) | ds \leq \tilde{N}\,\int_{t_{1}}^{t_{2}} |a_{1}(s)|\,ds. $$

Thus, $ T(H) $ is an equicontinuous set in $ X $. By Arzel\`{a}-Ascoli's Theorem, we deduce that $T(H)$ is relatively compact, that is, $T$ is a compact operator.
	
Let $u\in X$ be  such that $u=\mu\,T\,u$ for some $0\le \mu\le 1$. Then, using \eqref{formula2} we have that 
\begin{equation*}
\|u\|_{\infty}=\mu \|T\,u\|_{\infty}\le \|T\,u\|_{\infty}\le  \|u\|_{\infty}\,P +Q.
\end{equation*}
Thus
 \begin{equation*}
 \|u\|_{\infty}\le \frac{Q}{1-P}.
 \end{equation*}
 Therefore, applying Schaefer's Theorem, we conclude that problem \eqref{nonlinear-problem} has at least one solution $u\in X$. 
\end{proof}

\begin{remark}
We note that by definition of $X$, $(T\,u)$ is  not necessarily derivable. However, $(T\,u)'$ always exists because of the regularity of Green's function.
\end{remark}  

Next we will apply the above results to the particular case of the nonlinear second order Dirichlet problem. 

Suppose there exists $u_{D}$ a solution of the nonlinear Dirichlet problem
\begin{equation}\label{dir}
L[\lambda]\,u(t)=f(t,u(t)),\;\; \text{a.e.}\;\; t\in I,\quad u(0)=u(1)=0
\end{equation}
 and $u_{P}$ solution of the nonlinear Periodic problem
 \begin{equation*}
 L[\lambda]\,u(t)=f(t,u(t)),\;\; \text{a.e.}\;\; t\in I,\quad u(0)-u(1)=u'(0)-u'(1)=0.
 \end{equation*}

By definition of the Green's functions, we have that
\begin{equation*}
u_{D}(t)=\displaystyle \int_{0}^{1} G_{D}[\lambda](t,s)\,f(s,u_{D}(s))\,ds
\end{equation*}
and
\begin{equation*}
u_{P}(t)=\displaystyle \int_{0}^{1} G_{P}[\lambda](t,s)\,f(s,u_{P}(s))\,ds.
\end{equation*} 
We know from Theorem \ref{teorema2} that 
\begin{equation}\label{Dir}
G_{D}[\lambda](t,s)=G_{P}[\lambda](t,s)-\frac{G_{P}[\lambda](t,0)}{G_{P}[\lambda](1,0)}\, G_{P}[\lambda](1,s),\;\; \forall (t,s)\in I\times I.
\end{equation}

Let us define
 \begin{equation*}
 \begin{aligned}
 K_{1}&=\max_{t\in I}\;\; \displaystyle \int_{0}^{1} |G_{P}[\lambda](t,s)|\,K(s)\, ds,\\
 K_{2}&=\max_{t\in I}\;\;  \Big |\frac{G_{P}[\lambda](t,0)}{G_{P}[\lambda](1,0)} \Big|  \, \displaystyle \int_{0}^{1} |G_{P}[\lambda](1,s))|\,K(s)\,ds,\\
 K_{3}&=\max_{t\in I}\;\; \Big|\frac{G_{P}[\lambda](t,0)}{G_{P}[\lambda](1,0)}\Big| \, \displaystyle \int_{0}^{1} |G_{P}[\lambda](1,s)\,f(s,0)|\,ds ,\\
 P_{D}&=\max_{t\in I} \displaystyle \int_{0}^{1} |G_{D}[\lambda](t,s)| \,K(s)\,ds,\\
 Q_{D}&=\max_{t\in I} \displaystyle \int_{0}^{1} |G_{D}[\lambda](t,s)\, f(s,0)|\,ds.
 \end{aligned}
 \end{equation*}
 
As a direct consequence of Theorem \ref{thdir} and Corollary \ref{thdir1} we arrive at the follow results: 
 \begin{theorem}\label{th}
 	Suppose that $(H_{2})$ holds and $K_{1}<1$, then the following inequality is fulfilled
 	\begin{equation}\label{thxxx}
 	\|u_{D}-u_{P}\|_{\infty}\le \frac{1}{1-K_{1}}\left(K_{2}\,\|u_{D}\|_{\infty}+K_{3}\right).
 	\end{equation}
 \end{theorem}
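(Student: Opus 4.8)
The plan is to specialize the general estimate of Theorem~\ref{thdir} to the pair $(G_D[\lambda],G_P[\lambda])$, using the explicit decomposition \eqref{Dir} that expresses $G_D[\lambda]$ in terms of $G_P[\lambda]$. Comparing \eqref{Dir} with the generic formula \eqref{e-formula}, one reads off that here the base Green's function $g$ is played by $G_P[\lambda]$, the single surviving factor $\omega_i(t)$ is $G_P[\lambda](t,0)$, the term $C_j(g(\cdot,s))$ is $G_P[\lambda](1,s)$, and the coefficient $\delta_i b_{ij}$ equals $-1/G_P[\lambda](1,0)$. With these identifications the constant $K^1$ of Theorem~\ref{thdir} becomes $K_1$, while the single nonzero products $|\delta_i b_{ij}|K^2_{ij}$ and $|\delta_i b_{ij}|K^3_{ij}$ collapse to $K_2$ and $K_3$, since the scalar $1/|G_P[\lambda](1,0)|$ can be absorbed inside the maximum over $t$. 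Thus \eqref{thxxx} is exactly the conclusion of Theorem~\ref{thdir} in this instance.

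Rather than invoking the general result verbatim, I would present a short self-contained argument mirroring its proof. First I substitute \eqref{Dir} into the representation $u_D(t)=\int_0^1 G_D[\lambda](t,s)f(s,u_D(s))\,ds$, which splits $u_D$ into a $G_P$-part and a correction carrying the factor $G_P[\lambda](t,0)/G_P[\lambda](1,0)$. Subtracting $u_P(t)=\int_0^1 G_P[\lambda](t,s)f(s,u_P(s))\,ds$ produces three integrals: one over $G_P[\lambda](t,s)\,[f(s,u_D(s))-f(s,u_P(s))]$, and two coming from the correction term. The key algebraic step is to write $f(s,u_D(s))=[f(s,u_D(s))-f(s,0)]+f(s,0)$ in the correction integral, separating a part controlled by $\|u_D\|_\infty$ through the Lipschitz bound from the genuinely inhomogeneous part involving $f(s,0)$.

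Next I apply hypothesis $(H_2)$ to the first two integrals and bound the three pieces pointwise in $t$: the first is at most $\|u_D-u_P\|_\infty\int_0^1|G_P[\lambda](t,s)|K(s)\,ds\le K_1\|u_D-u_P\|_\infty$; the second is at most $\|u_D\|_\infty\,|G_P[\lambda](t,0)/G_P[\lambda](1,0)|\int_0^1|G_P[\lambda](1,s)|K(s)\,ds\le K_2\|u_D\|_\infty$; and the third is bounded by $K_3$ directly. Taking the supremum over $t\in I$ gives
\[
\|u_D-u_P\|_\infty\le K_1\,\|u_D-u_P\|_\infty+K_2\,\|u_D\|_\infty+K_3,
\]
and the hypothesis $K_1<1$ lets me move the first term to the left and divide, yielding \eqref{thxxx}.

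The argument involves no real obstacle beyond careful bookkeeping; the point requiring attention is orienting the decomposition correctly, namely recognizing that \eqref{Dir} treats $G_P[\lambda]$ (not $G_D[\lambda]$) as the base function, so that the Lipschitz self-referential term is controlled by $K_1$ built from $G_P[\lambda]$ while the estimate is expressed through $\|u_D\|_\infty$. The only standing nondegeneracy assumption is $G_P[\lambda](1,0)\neq 0$, that is $r_4[\lambda](1)\neq 0$, which is precisely the condition under which \eqref{Dir} (Theorem~\ref{teorema2}) was derived.
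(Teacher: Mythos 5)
Your proposal is correct and matches the paper's route: the paper obtains Theorem \ref{th} precisely as the specialization of Theorem \ref{thdir} (with $g=G_{P}[\lambda]$, $u_{B}=u_{P}$, $u_{A}=u_{D}$, and the single correction term of \eqref{Dir} turning $K^{1}$, $|\delta_i b_{ij}|K^{2}_{ij}$, $|\delta_i b_{ij}|K^{3}_{ij}$ into $K_{1}$, $K_{2}$, $K_{3}$), which is exactly the identification you make. Your self-contained rerun of the argument, including the splitting $f(s,u_D)=[f(s,u_D)-f(s,0)]+f(s,0)$ in the correction integral and the final absorption of $K_{1}\|u_D-u_P\|_\infty$ using $K_{1}<1$, mirrors the paper's proof of Theorem \ref{thdir} step for step.
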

 \begin{corollary}
 	Assume that $(H_{2})$ holds and $K_{1}<1$. Then, the following inequalities are fulfilled
 \begin{equation*}
 \begin{aligned}
 \|u_{P}\|_{\infty}&\le \frac{K_{2}-K_{1}+1}{1-K_{1}}\,\|u_{D}\|_{\infty}+\frac{K_{3}}{1-K_{1}},\\
 \|u_{P}\|_{\infty}&\ge \frac{1-K_{1}-K_{2}}{1-K_{1}}\,\|u_{D}\|_{\infty}-\frac{K_{3}}{1-K_{1}}.
 \end{aligned}
 \end{equation*}
\end{corollary}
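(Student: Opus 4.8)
The plan is to derive both estimates directly from Theorem~\ref{th} together with the reverse triangle inequality for the supremum norm, in exactly the same way Corollary~\ref{thdir1} was obtained from Theorem~\ref{thdir}. First I would record the elementary fact that $\big|\,\|u_{P}\|_{\infty}-\|u_{D}\|_{\infty}\,\big|\le\|u_{P}-u_{D}\|_{\infty}$, which is equivalent to the two scalar inequalities $\|u_{P}\|_{\infty}-\|u_{D}\|_{\infty}\le\|u_{P}-u_{D}\|_{\infty}$ and $\|u_{D}\|_{\infty}-\|u_{P}\|_{\infty}\le\|u_{P}-u_{D}\|_{\infty}$. Since hypotheses $(H_{2})$ and $K_{1}<1$ are in force, Theorem~\ref{th} applies and supplies the bound \eqref{thxxx}, namely $\|u_{D}-u_{P}\|_{\infty}\le\frac{1}{1-K_{1}}\left(K_{2}\,\|u_{D}\|_{\infty}+K_{3}\right)$.

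Next I would substitute \eqref{thxxx} into the first scalar inequality, add $\|u_{D}\|_{\infty}$ to both sides, and combine the two terms over the common denominator $1-K_{1}$, which is strictly positive by the standing assumption $K_{1}<1$. This collapses $\|u_{D}\|_{\infty}+\frac{K_{2}\|u_{D}\|_{\infty}+K_{3}}{1-K_{1}}$ into $\frac{(1-K_{1}+K_{2})\,\|u_{D}\|_{\infty}+K_{3}}{1-K_{1}}$, giving the upper estimate $\|u_{P}\|_{\infty}\le\frac{K_{2}-K_{1}+1}{1-K_{1}}\,\|u_{D}\|_{\infty}+\frac{K_{3}}{1-K_{1}}$. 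Feeding the same bound \eqref{thxxx} into the second scalar inequality and rearranging in the identical fashion yields $\|u_{P}\|_{\infty}\ge\|u_{D}\|_{\infty}-\frac{K_{2}\|u_{D}\|_{\infty}+K_{3}}{1-K_{1}}=\frac{(1-K_{1}-K_{2})\,\|u_{D}\|_{\infty}-K_{3}}{1-K_{1}}$, which is precisely the lower estimate.

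There is essentially no obstacle here: all of the analytic content is already packaged in Theorem~\ref{th}, and the present statement is a purely algebraic rearrangement of \eqref{thxxx} through the reverse triangle inequality. The only point deserving even passing attention is that the sign of $1-K_{1}$ must be known to be positive, so that clearing the denominator preserves the direction of the inequalities; this is guaranteed by the hypothesis $K_{1}<1$. Consequently the proof reduces to the two-line substitution indicated above, mirroring the proof of Corollary~\ref{thdir1}.
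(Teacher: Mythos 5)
Your proof is correct and follows exactly the route the paper intends: the corollary is stated there as a direct specialization of Corollary \ref{thdir1}, whose proof is precisely the combination of the reverse triangle inequality $\bigl|\,\|u_{P}\|_{\infty}-\|u_{D}\|_{\infty}\,\bigr|\le\|u_{P}-u_{D}\|_{\infty}$ with the bound \eqref{thxxx}, followed by the algebraic rearrangement you describe (using $1-K_{1}>0$). Nothing is missing.
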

 	
 \begin{theorem}\label{thxx}
 	Assume that $(H_{1})$ and $(H_{2})$ hold and $P_{D}<1$. Then the Dirichlet problem \eqref{dir} has at least one solution. 	
 \end{theorem}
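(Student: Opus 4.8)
The plan is to recognize that the nonlinear Dirichlet problem \eqref{dir} is nothing but the particular instance of the general nonlinear problem \eqref{nonlinear-problem} obtained by setting $n=2$, taking $L_n$ to be the second order operator $L[\lambda]$ (that is, $a_1\equiv 0$ and $a_2=a+\lambda$), and imposing the local Dirichlet conditions. Concretely, with $B_1(u)=u(0)$, $B_2(u)=u(1)$ and $\delta_1=\delta_2=0$, the boundary relations $B_i(u)=\delta_i\,C_i(u)$ collapse to $u(0)=u(1)=0$. Since the parameters $\delta_i$ vanish, the generalized formula \eqref{e-formula3} reduces to $G(t,s,0,0)=g(t,s)$, and this Green's function is precisely $G_{D}[\lambda]$.

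With this identification, I would observe that the constant $P$ defined just before Theorem \ref{thx} specializes to
\begin{equation*}
P=\max_{t\in I}\displaystyle\int_{0}^{1}\big|G(t,s,\delta_{1},\delta_{2})\big|\,K(s)\,ds=\max_{t\in I}\displaystyle\int_{0}^{1}\big|G_{D}[\lambda](t,s)\big|\,K(s)\,ds=P_{D}.
\end{equation*}
Therefore the three hypotheses of Theorem \ref{thx}, namely $(H_{1})$, $(H_{2})$ and $P<1$, are met here by $(H_{1})$, $(H_{2})$ and the assumed bound $P_{D}<1$. Invoking Theorem \ref{thx} for this instance then delivers at once the existence of at least one solution $u\in X=(C(I),\|\cdot\|_{\infty})$ of problem \eqref{dir}.

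Alternatively, one could reprove the statement from scratch by repeating the Schaefer argument employed in the proof of Theorem \ref{thx}: define the integral operator $T\,u(t)=\int_{0}^{1} G_{D}[\lambda](t,s)\,f(s,u(s))\,ds$, show using $(H_{1})$ that it is continuous and compact (boundedness of $T(H)$ via $(H_{2})$, and equicontinuity via a uniform bound $\big|\frac{\partial}{\partial t}G_{D}[\lambda](t,s)\big|\le M$ together with the Carathéodory majorant $\phi_{r}$), and verify that the set of solutions to $u=\mu\,T\,u$ is bounded using the a priori estimate $\|u\|_{\infty}\le P_{D}\,\|u\|_{\infty}+Q_{D}$, which gives $\|u\|_{\infty}\le Q_{D}/(1-P_{D})$ precisely because $P_{D}<1$; Schaefer's Theorem \ref{Schaefer} then yields a fixed point. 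In either route the only point requiring genuine care is confirming that $G_{D}[\lambda]$ enjoys the regularity needed for the compactness step, but this is guaranteed by Definition \ref{d-Green-Function}. Consequently I do not anticipate any real obstacle beyond carefully recording the specialization $P=P_{D}$ and $Q=Q_{D}$, so the result follows as a direct corollary of Theorem \ref{thx}.
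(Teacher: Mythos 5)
Your proposal is correct and matches the paper's (implicit) argument: Theorem \ref{thxx} is stated there as a direct corollary of Theorem \ref{thx}, obtained by recognizing that the relevant Green's function for the Dirichlet problem is $G_{D}[\lambda]$, so that $P=P_{D}$ and $Q=Q_{D}$ and Schaefer's theorem applies to $T\,u(t)=\int_{0}^{1}G_{D}[\lambda](t,s)\,f(s,u(s))\,ds$ with the a priori bound $\|u\|_{\infty}\le Q_{D}/(1-P_{D})$. Whether one realizes the Dirichlet problem inside the general scheme with $\delta_{1}=\delta_{2}=0$ (your choice) or as a perturbation of the periodic problem with $G(\cdot,\cdot,1,1)=G_{D}[\lambda]$ (the paper's framing via \eqref{Dir}) is immaterial, since the hypotheses and conclusion of Theorem \ref{thx} depend only on the kernel $G$ itself.
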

\begin{remark}
 The same previous arguments can be applied to the rest of the problems discussed in this article using the formulas that relate Green's functions obtained in the previous section.
\end{remark}
In the sequel, we present an example to illustrate our results. 
\begin{example}
Consider the following equation	
\begin{equation*}
u''(t)-\,u(t)= \frac{c}{\sqrt{t}}\,e^{-u^{2}(t)},\;\; \text{a.e.} \;\; t\in I, \;\;\text{and}\;\; c>0.
\end{equation*}	

In this case, $f(t,u)= \frac{c}{\sqrt{t}}\, e^{-u^{2}}$ is $L^{1}$-Carathéodory function  and $f(t,0)= \frac{c}{ \sqrt{t}}\neq 0$ for all $t \in (0,1]$. Moreover, it is immediate to see that $f$ satisfy the condition $(H_{2})$ with $K(t)=c\,\sqrt{\frac{2}{e \,t}}$ for a.e. $t \in [0,1]$.

We have that the Green's function of the periodic problem is given by 
\begin{equation*}
G_{P}(t,s)=\left\{
\begin{aligned}
&\dfrac{e^{s-t+1}+e^{t-s}}{2(1- e)},\;\; 0\leq s\leq t\leq 1,\\
	&\dfrac{e^{t-s+1}+e^{s-t}}{2(1-e)},\;\; 0\leq t< s\leq 1,
	\end{aligned}
\right.
\end{equation*} 
and that of the Dirichlet problem is
\begin{equation*}
G_{D}(t,s)=\left\{
\begin{aligned}
&-\dfrac{(e^{2 s}-1)(e^{2}-e^{2t}) e^{-(s+t)}}{2(e^{2}-1)},\;\; 0\leq s\leq t\leq 1,\\
&\dfrac{(e^{2(s-1)}-1)(e^{2 t}-1) e^{-(s+t-2)}}{2(e^{2}-1)},\;\; 0\leq t< s\leq 1.
\end{aligned}
\right.
\end{equation*}


With the notation used in Theorem \ref{th} and by numerical approach, it can be seen that  
\begin{equation*}
\begin{aligned}
K_{1}&=\max_{t\in I}\;\; \displaystyle \int_{0}^{1} |G_{P}(t,s)|\,K(s) \, ds\approx 1.7472\,c,\\
K_{2}&=\max_{t\in I}\;\;   \Big |\frac{G_{P}(t,0)}{G_{P}(1,0)}\Big|  \, \displaystyle \int_{0}^{1} |G_{P}(1,s))|\,K(s)\,ds\approx 1.744\,c,\\
K_{3}&=\max_{t\in I}\;\; \Big|\frac{G_{P}(t,0)}{G_{P}(1,0)} \, \displaystyle \int_{0}^{1} G_{P}[\lambda](1,s)\,f(s,0)\,ds\Big|\approx 2.033\, c,\\
P_{P}&=\max_{t\in I} \displaystyle \int_{0}^{1} |G_{P}(t,s)|\,K(s) \,ds\approx 1.7472\,c,\\
Q_{P}&=\max_{t\in I} \displaystyle \int_{0}^{1} |G_{D}(t,s)\, f(s,0)|\,ds\approx 2.0369\,c,\\
P_{D}&=\max_{t\in I} \displaystyle \int_{0}^{1} |G_{D}(t,s)|\,K(s) \,ds\approx 0.1651\,c,\\
Q_{D}&=\max_{t\in I} \displaystyle \int_{0}^{1} |G_{D}(t,s)\, f(s,0)|\,ds\approx 0.179\,c.
\end{aligned}
\end{equation*}
Then, the conditions $K_{1}<1$ and $P_{D}<1$ are fulfilled if and only if 
$$0<c<\min\Big\{\frac{1}{0.1651},\frac{1}{1.7472} \Big\}\approx 0.572344.$$
Therefore if $0<c<0.572344$, then by Theorem \ref{thxx} there is at least one solution $u_{D}$ of  the Dirichlet problem
\begin{equation*}
u''(t)-\,u(t)=\frac{c}{\sqrt{t}}\,e^{-u^{2}(t)},\;\; \text{a.e.}\;\;t\in I,\;\;u(0)=u(1)=0.
\end{equation*} 
By the proof of Theorem \ref{thx} we have that 
\begin{equation*}
\|u_{P}\|_{\infty}\le \frac{Q_{P}}{1-P_{P}}\approx \frac{2.0369\,c}{1-1.7472\,c}
\end{equation*}
and
\begin{equation*}
\|u_{D}\|_{\infty}\le \frac{Q_{D}}{1-P_{D}}\approx \frac{0.179\,c}{1-0.1651\,c}.
\end{equation*}
As a consequence, we deduce that 
\begin{equation}\label{equation1}
 \begin{aligned} 
 	\|u_{P}-u_{D}\|_{\infty}&\leq \|u_{P}\|_{\infty}+\|u_{D}\|_{\infty}\leq \frac{2.0369\,c}{1-1.7472\,c}+ \frac{0.179\,c}{1-0.1651\,c}\\
&=\dfrac{c\,(7.68176-2.25\,c) }{c^{2}-6.62928\,c+3.46665}:=\gamma(c).
\end{aligned}
\end{equation}
On the other hand, if $0<c<0.572344$ applying inequality \eqref{thxxx} we obtain the following estimate of the distance between the solutions:
\begin{equation}\label{equation2}
\begin{aligned}
\|u_{P}-u_{D}\|_{\infty}&\le \frac{1}{1-K_{1}}\,\left(K_{2}\,\|u_{D}\|_{\infty}+K_{3} \right)\\
&\le\frac{1}{1-K_{1}}\,\left(K_{2}\,\frac{0.179\,c}{1-0.1651\,c}\,+K_{3} \right)\\
&\approx \frac{1}{1-1.7472\,c}\,\left(1.744\,c \,\frac{0.179\,c}{1-0.1651\,c}\,+2.033\,c\right) \\
&=\dfrac{c\,(7.0477-0.0813703\,c) }{c^{2}-6.62928\,c+3.46665}:=\psi(c).
\end{aligned}
\end{equation}
Comparing \eqref{equation1} and \eqref{equation2} (see Figure \ref{diferencia}), we have that the estimate \eqref{equation2} is better than \eqref{equation1} for $0<c<0.2878$ and worse for $0.2878<c<0.572344$.
\begin{figure}
\begin{center}
\includegraphics[width=9cm]{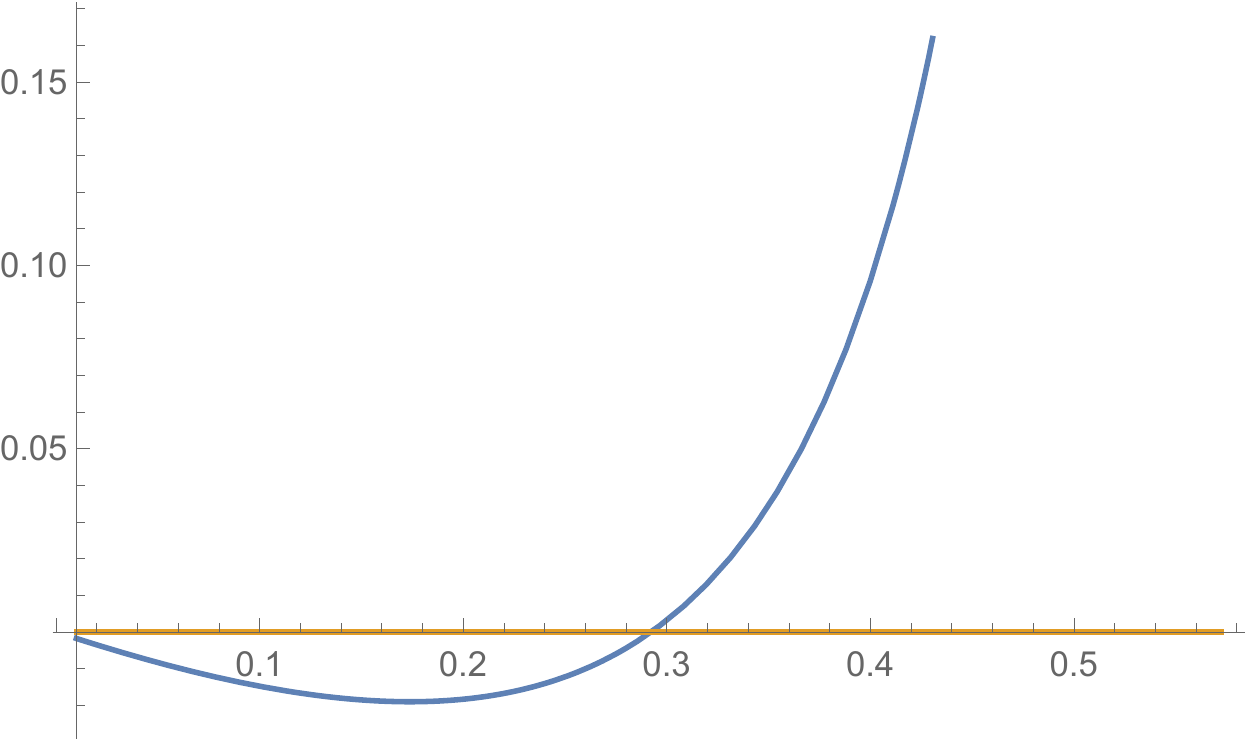}
	\caption{Representation of the function $\psi-\gamma$ on the interval $(0,0.572344)$.}
	\label{diferencia}
\end{center}
\end{figure}
\end{example}

\end{document}